\newtheorem{theorem}{Theorem}[section]
\newtheorem{lemma}[theorem]{Lemma}
\newtheorem{proposition}[theorem]{Proposition}
\theoremstyle{definition}
\newtheorem{definition}[theorem]{Definition}
\newtheorem{example}[theorem]{Example}
\theoremstyle{remark}
\newtheorem{remark}[theorem]{Remark}
\numberwithin{equation}{section}
\begin{document}

\setlength\parskip{0.5em plus 0.1em minus 0.2em}

\title[An adjunction inequality for Real embedded surfaces]{An adjunction inequality for Real embedded surfaces}
\author{David Baraglia}

\address{School of Mathematical Sciences, The University of Adelaide, Adelaide SA 5005, Australia}

\email{david.baraglia@adelaide.edu.au}

%\subjclass[2010]{Primary 53C08, 19L50; Secondary 53D18, 53C80}

\date{\today}

%%%%%%%%%%%%%%%%%%%%%%%%%%%%%%%%%%%%%%%%%%%%%%%%%%%%%%%%%%%%%%%%%%%%%%%%%%%%%%%%
%%%%%%%%%%%%%%%%%%%%%%%%%%%%%%%%%%%%%%%%%%%%%%%%%%%%%%%%%%%%%%%%%%%%%%%%%%%%%%%%
%%%%%%%%%%%%%%%%%%%%%%%%%%%%%%%%%%%%%%%%%%%%%%%%%%%%%%%%%%%%%%%%%%%%%%%%%%%%%%%
%%%%%%%%%%%%%%%%%%%%%%%%%%%%%%%%%%%%%%%%%%%%%%%%%%%%%%%%%%%%%%%%%%%%%%%%%%%%%%%%
\begin{abstract}
A Real structure on a $4$-manifold $X$ is an orientation preserving smooth involution $\sigma$. We say that an embedded surface $\Sigma \subset X$ is Real if $\sigma$ maps $\Sigma$ to itself orientation reversingly. We prove that a cohomology class $u \in H^2(X ; \mathbb{Z})$ can be represented by a Real embedded surface if and only if $u$ can be lifted to a class in equivariant cohomology $H^2_{\mathbb{Z}_2}(X ; \mathbb{Z}_-)$. We prove that if the Real Seiberg--Witten invariants of $X$ are non-zero then the genus of Real embedded surfaces in $X$ satisfy an adjunction inequality. We prove two versions of the adjunction inequality, one for non-negative self-intersection and one for arbitrary self-intersection. We show with examples that the minimal genus of Real embedded surfaces can be larger than the minimal genus of arbitrary embedded surfaces.
\end{abstract}

\maketitle

%%%%%%%%%%%%%%%%%%%%%%%%%%%%%%%%%%%%%%%%%%%%%%%%%%%%%%%%%%%%%%%%%%%%%%%%%%%%%%%%
%%%%%%%%%%%%%%%%%%%%%%%%%%%%%%%%%%%%%%%%%%%%%%%%%%%%%%%%%%%%%%%%%%%%%%%%%%%%%%%%
%%%%%%%%%%%%%%%%%%%%%%%%%%%%%%%%%%%%%%%%%%%%%%%%%%%%%%%%%%%%%%%%%%%%%%%%%%%%%%%%
%%%%%%%%%%%%%%%%%%%%%%%%%%%%%%%%%%%%%%%%%%%%%%%%%%%%%%%%%%%%%%%%%%%%%%%%%%%%%%%%

%%%%%%%%%%%%%%%%%%%%%%%%%%%%%%%%%%%%%%%%%%

%%%%%%%%%%%%%%%%%%%%%%%%%%%%%%%

%%%%%%%%%%%%%%%%%%%%%%%%%%%%%%%%%%%%%%%%%%%%%%%%%%%%%%%%
\section{Introduction}

Let $X$ be an oriented smooth $4$-manifold. A {\em Real structure} on $X$ is a smooth orientation preserving involution $\sigma : X \to X$. We say that an embedded oriented surface $\Sigma \to X$ is {\em Real} if $\sigma$ sends $\Sigma$ to itself orientation reversingly. One motivation for studying Real embedded surfaces comes from real algebraic geometry. If $S$ is a non-singular real algebraic surface then its complexification $S_{\mathbb{C}}$ is a complex surface with a Real structure $\sigma$ given by complex conjugation. A non-singular real algebraic curve in $S$ can be complexified giving a Real embedded surface in $S_{\mathbb{C}}$.

This paper is concerned with the study of Real embedded surfaces, particularly the {\em Real minimal genus problem}: given a class $\alpha \in H^2(X ; \mathbb{Z})$, what is the minimum genus of a Real embedded surface in $X$ representing $\alpha$? Before tackling this problem an even more fundamental problem must be addressed, namely when can a class $\alpha \in H^2(X ; \mathbb{Z})$ be represented by a Real embedded surface at all? Our first result gives a precise answer to this question.

\begin{theorem}
Let $X$ be a compact, oriented, smooth $4$-manifold and $\sigma$ a Real structure on $X$. A class $\alpha \in H^2(X ; \mathbb{Z})$ can be represented by a Real embedded surface $\Sigma \subset X$ if and only if $\alpha$ is in the image of the forgetful map $H^2_{\mathbb{Z}_2}(X ; \mathbb{Z}_-) \to H^2(X ; \mathbb{Z})$. Here $\mathbb{Z}_-$ denotes the equivariant local system which is the constant local system $\mathbb{Z}$ but where $\sigma$ acts as $-1$.
\end{theorem}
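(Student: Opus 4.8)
The plan is to prove both directions by relating a Real embedded surface to its equivariant Poincaré dual, using the fact that the orientation-reversing action of $\sigma$ on the normal data forces the relevant local coefficients to be twisted.

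\medskip

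\emph{Necessity.} Suppose $\Sigma \subset X$ is Real, so $\sigma(\Sigma) = \Sigma$ with $\sigma|_\Sigma$ orientation reversing. First I would upgrade the ordinary Thom/Poincaré dual construction to the equivariant setting. The key geometric observation is that $\Sigma$ has an equivariant tubular neighbourhood $\nu\Sigma$ on which $\sigma$ acts fibrewise linearly; since $\sigma$ preserves the orientation of $X$ but reverses that of $\Sigma$, it must reverse the orientation of the normal bundle $N$ of $\Sigma$. Hence the Thom class of $N$ lives naturally in equivariant cohomology with coefficients twisted by the sign of the $\sigma$-action on the normal orientation, i.e. in $H^2_{\mathbb{Z}_2}(\mathrm{Th}(N); \mathbb{Z}_-)$ (after collapsing, a class in $H^2_{\mathbb{Z}_2}(X, X \setminus \Sigma; \mathbb{Z}_-)$). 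Pulling back along $X \to (X, X\setminus\Sigma)$ and restricting to the underlying space recovers the ordinary Poincaré dual $\alpha$ of $\Sigma$; this exhibits $\alpha$ as lying in the image of the forgetful map $H^2_{\mathbb{Z}_2}(X; \mathbb{Z}_-) \to H^2(X; \mathbb{Z})$. The only subtlety here is checking that the sign local system arising from the normal action is exactly $\mathbb{Z}_-$ (the constant sheaf on which the generator acts by $-1$) rather than some more complicated equivariant system, but this is immediate because the $\mathbb{Z}_2$-action on the base $X$ is what it is and the twist is purely by the normal orientation character.

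\medskip

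\emph{Sufficiency.} Conversely, suppose $\alpha$ lifts to $\tilde\alpha \in H^2_{\mathbb{Z}_2}(X; \mathbb{Z}_-)$. I would represent $\tilde\alpha$ geometrically. Over $\mathbb{Z}$ a degree-two class is represented by a map to $\mathbb{CP}^\infty = K(\mathbb{Z},2)$, or equivalently by a complex line bundle whose zero set of a generic section is a surface dual to $\alpha$. The twisted equivariant analogue: a class in $H^2_{\mathbb{Z}_2}(X; \mathbb{Z}_-)$ corresponds to a $\mathbb{Z}_2$-equivariant complex line bundle $L \to X$ on which $\sigma$ acts anti-linearly (conjugate-linearly) covering the action on $X$ — this is precisely the ``Real line bundle'' whose characteristic class is a twisted equivariant $c_1$. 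Choosing a $\sigma$-invariant Hermitian metric and a section $s$ of $L$ that is generic among $\sigma$-equivariant sections (so $s(\sigma x) = \overline{\sigma \cdot s(x)}$), its zero set $\Sigma = s^{-1}(0)$ is a $\sigma$-invariant embedded surface, and transversality can be arranged equivariantly by a standard averaging/parametrised-transversality argument on the fixed locus and its complement. Finally, one checks $\sigma$ acts on $\Sigma$ orientation reversingly: the orientation of $\Sigma$ comes from the complex orientation of the normal bundle $L|_\Sigma$ together with that of $X$, and the anti-linearity of the $\sigma$-action on $L$ flips the complex orientation of $L|_\Sigma$ while $\sigma$ preserves that of $X$, so the induced orientation on $\Sigma$ is reversed. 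Thus $\Sigma$ is a Real embedded surface representing $\alpha$.

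\medskip

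The main obstacle I anticipate is the equivariant transversality step in the sufficiency direction: equivariant transversality can genuinely fail, so one must be careful that generic $\sigma$-equivariant sections of $L$ do cut out smooth surfaces. The saving grace is that the relevant group is $\mathbb{Z}_2$ acting anti-linearly, so near the fixed-point set $X^\sigma$ the equivariant sections restrict to \emph{real} sections of a real line bundle, and one reduces to ordinary transversality for real sections on $X^\sigma$ together with free-action transversality away from $X^\sigma$; these can be combined. A secondary technical point is setting up twisted equivariant cohomology and the twisted equivariant Thom isomorphism carefully enough that the two constructions are genuinely inverse, i.e. that the surface produced in the sufficiency direction has equivariant dual equal to the chosen lift — though for the statement as given (mere representability of $\alpha$) one only needs the two implications separately.
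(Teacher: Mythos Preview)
Your approach matches the paper's: necessity via the equivariant Thom/push-forward class of $\Sigma$, sufficiency via the Real line bundle classified by the lift $\tilde\alpha$ and a transverse Real section. Two technical points are handled more carefully in the paper and are worth flagging.

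First, the zero locus of a generic Real section need not be connected, and the paper's convention is that a Real embedded surface is connected. The paper repairs this at the end by attaching $\sigma$-exchanged pairs of handles to connect the components; you should include this step.

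Second, your equivariant transversality sketch is thinner than the actual argument requires. Near a surface component $S\subset X^\sigma$, a Real section of $L$ restricts to a section of a \emph{real} line bundle $A$ on $S$; making that transverse on $S$ forces the zero set to meet $S$ in circles, but transversality \emph{in $X$} along those circles does not follow from transversality on $S$ alone. The paper works in a tubular neighbourhood $\mu\cong D(N_S)$, identifies $L|_\mu\cong A\oplus A$ with $\sigma_L$ acting by $(+1,-1)$, takes the first component to be a transverse section $b$ of $A$ over $S$, and builds the second component from a fibrewise-linear function on $N_S$ supported near the zero circles of $b$, so that the pair is transverse in the $4$-manifold and remains transverse on $\partial\mu$ (needed to glue with the free-quotient construction on the complement). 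Your ``real transversality on $X^\sigma$ plus free-action transversality off it'' is the right shape, but the normal-direction input near $S$ is where the work is.
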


Furthermore, we give a precise description of the image of the forgetful map $H^2_{\mathbb{Z}_2}(X ; \mathbb{Z}_-) \to H^2(X ; \mathbb{Z})$ when $b_1(X) = 0$ and $\sigma$ is not free.

\begin{theorem}
Let $X$ be a compact, oriented, smooth $4$-manifold and $\sigma$ a Real structure on $X$. Suppose that $b_1(X) = 0$ and that $\sigma$ is not free. Then the forgetful map $H^2_{\mathbb{Z}_2}(X ; \mathbb{Z}_-) \to H^2(X ; \mathbb{Z})$ is injective with image the set of $\alpha \in H^2(X ; \mathbb{Z})$ such that $\sigma^*(\alpha) = -\alpha$.
\end{theorem}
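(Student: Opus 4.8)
The plan is to read off $H^2_{\mathbb{Z}_2}(X;\mathbb{Z}_-)$ from the Serre spectral sequence of the Borel fibration $X \to X_{h\mathbb{Z}_2} \to B\mathbb{Z}_2$, where $X_{h\mathbb{Z}_2} = E\mathbb{Z}_2 \times_{\mathbb{Z}_2} X$ and $\mathbb{Z}_-$ is regarded as the local system on $X_{h\mathbb{Z}_2}$ pulled back from $B\mathbb{Z}_2$. Since that local system restricts to the constant system $\mathbb{Z}$ on a fibre $X$, the inclusion of a fibre induces exactly the forgetful map $H^2_{\mathbb{Z}_2}(X;\mathbb{Z}_-) \to H^2(X;\mathbb{Z})$, and this is the fibre edge homomorphism. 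The $E_2$-page is $E_2^{p,q} = H^p\bigl(\mathbb{Z}_2; H^q(X;\mathbb{Z})\bigr)$, group cohomology of $\mathbb{Z}_2$ with the twisted module structure $\alpha \mapsto -\sigma^*\alpha$ on $H^q(X;\mathbb{Z})$ (the sign coming from $\mathbb{Z}_-$); in particular $E_2^{0,2} = \{\alpha \in H^2(X;\mathbb{Z}) : \sigma^*\alpha = -\alpha\}$ is precisely the subgroup the theorem predicts as the image.

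Next I would record the low-degree terms, using $b_1(X) = 0$ (and connectedness of $X$): then $H^1(X;\mathbb{Z}) = 0$, so $E_2^{1,1} = E_2^{2,1} = 0$; and a one-line computation from the standard periodic resolution of $\mathbb{Z}$ over $\mathbb{Z}[\mathbb{Z}_2]$ gives $H^{2k}(\mathbb{Z}_2;\mathbb{Z}_-) = 0$ and $H^{2k+1}(\mathbb{Z}_2;\mathbb{Z}_-) = \mathbb{Z}/2$ for all $k \ge 0$, so $E_2^{2,0} = 0$ while $E_2^{3,0} = \mathbb{Z}/2$. Injectivity of the forgetful map is then immediate: its kernel is the subgroup $F^1 H^2_{\mathbb{Z}_2}(X;\mathbb{Z}_-)$, which carries a two-step filtration with quotients subquotients of $E_2^{1,1} = 0$ and $E_2^{2,0} = 0$, hence vanishes. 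For the image, the fibre edge homomorphism identifies it with $E_\infty^{0,2}$; since $d_2$ out of $E^{0,2}$ lands in $E_2^{2,1} = 0$ and all $d_r$ with $r \ge 4$ out of $E^{0,2}$ land in zero groups for degree reasons, we get $E_\infty^{0,2} = \ker\bigl(d_3 \colon E_3^{0,2} \to E_3^{3,0}\bigr)$, with $E_3^{0,2} = E_2^{0,2}$ and $E_3^{3,0} = E_2^{3,0} = \mathbb{Z}/2$ (nothing on the $E_2$-page can hit the bottom row in degree $3$ since $E_2^{1,1} = 0$).

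The heart of the matter is thus to show $d_3 = 0$, and this is exactly where non-freeness enters. Since $\sigma$ is not free, choose $x_0 \in X^\sigma$; then $[e] \mapsto [(e, x_0)]$ is a well-defined continuous section $s \colon B\mathbb{Z}_2 \to X_{h\mathbb{Z}_2}$ of the bundle projection $\pi$, well-definedness being immediate from $\sigma(x_0) = x_0$. As $\pi \circ s = \mathrm{id}$, the pullback $\pi^* \colon H^*(B\mathbb{Z}_2;\mathbb{Z}_-) \to H^*_{\mathbb{Z}_2}(X;\mathbb{Z}_-)$ is injective, with left inverse $s^*$. But $\pi^*$ in degree $3$ factors as the base edge homomorphism $E_2^{3,0} \twoheadrightarrow E_\infty^{3,0} \hookrightarrow H^3_{\mathbb{Z}_2}(X;\mathbb{Z}_-)$, so its injectivity forces $E_2^{3,0} = E_\infty^{3,0}$; equivalently no differential hits the bottom row in total degree $3$, and the only such differential not emanating from a zero group is $d_3 \colon E_3^{0,2} \to E_3^{3,0}$. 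Hence $d_3 = 0$, so $E_\infty^{0,2} = E_2^{0,2}$, and the forgetful map is an isomorphism onto $\{\alpha : \sigma^*\alpha = -\alpha\}$.

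I expect the main obstacle to be bookkeeping rather than any single hard computation: checking that the section exists honestly at the level of spaces (not merely up to homotopy) and that $s^*\pi^*$ really is the identity on cohomology with the twisted coefficients, and correctly identifying the forgetful map with the fibre edge homomorphism. Everything else — the group-cohomology calculation of $H^*(\mathbb{Z}_2;\mathbb{Z}_-)$, the vanishing of $E_2^{1,1}$, $E_2^{2,1}$, $E_2^{2,0}$, and the filtration argument for injectivity — is routine once $b_1(X) = 0$ is invoked.
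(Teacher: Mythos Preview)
Your proof is correct and follows essentially the same approach as the paper: both use the Borel/Serre spectral sequence of $X \to X_{h\mathbb{Z}_2} \to B\mathbb{Z}_2$ with $\mathbb{Z}_-$ coefficients, invoke $b_1(X)=0$ to kill $E_2^{1,1}$ and $E_2^{2,1}$, and use a fixed point of $\sigma$ to produce a section of the Borel fibration forcing all differentials into the bottom row (in particular $d_3\colon E_3^{0,2}\to E_3^{3,0}$) to vanish. Your write-up is somewhat more explicit about the group-cohomology computation $H^*(\mathbb{Z}_2;\mathbb{Z}_-)$ and the filtration bookkeeping for injectivity, but the argument is structurally identical to the paper's.
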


Hence if $b_1(X)=0$ and $\sigma$ is not free, then a class $\alpha \in H^2(X ; \mathbb{Z})$ can be represented by a Real embedded surface if and only if $\sigma^*(\alpha) = -\alpha$.

With the existence problem settled we turn to the minimal genus problem. First recall the usual adjunction inequality from Seiberg--Witten theory (\cite{km,law,os}) states that if $X$ is a compact, oriented, smooth $4$-manifold with $b_+(X) > 1$ and $\mathfrak{s}$ is a spin$^c$-structure for which the Seiberg--Witten invariant of $(X,\mathfrak{s})$ is non-zero, then for any smoothly embedded compact oriented surface $\Sigma \subset X$ of genus $g>0$ and non-negative self-intersection, we have
\[
2g - 2 \ge | \langle c(\mathfrak{s}) , [\Sigma] \rangle | + [\Sigma]^2.
\]
If $X$ has simple type then the same inequality still holds without assuming non-negative self-intersection. If $g=0$ and $[\Sigma]$ is non-torsion, then $[\Sigma]^2 < 0$.

We prove a similar adjunction inequality for Real embedded surfaces, but where the assumption that $X$ has a non-vanishing Seiberg--Witten invariant is replaced by the assumption that $(X,\sigma)$ has a non-vanishing {\em Real Seiberg--Witten invariant}. We will recall the basic properties of the Real Seiberg--Witten invariant in Section \ref{sec:rsw}. More details can be found in \cite{tw, bar3}. See also \cite{nak1,nak,kato,li,mi,kmt,bh,mpt} for related works on Real Seiberg--Witten theory. There are two versions of the Real Seiberg--Witten invariant that we will make use of. The first is a mod $2$ invariant $SW_R(X,\mathfrak{s}) \in \mathbb{Z}_2$ and the second, which is only defined under some additional assumptions is an integer invariant $SW_{R,\mathbb{Z}}(X,\mathfrak{s}) \in \mathbb{Z}$ (well-defined up to an overall sign factor). We will prove two versions of the adjunction inequality, one for non-negative self-intersection and one for arbitrary self-intersection. Let $b_+(X)^{-\sigma}$ denote the dimension of the $-1$-eigenspace of the action of $\sigma$ on $H^+(X)$.

\begin{theorem}
Let $X$ be a compact, oriented, smooth $4$-manifold and $\sigma$ a Real structure on $X$. Assume that $b_+(X)^{-\sigma} > 1$. Let $\mathfrak{s}$ be a Real spin$^c$-structure such that $SW_R(X , \mathfrak{s}) \neq 0$ (or that $SW_{R,\mathbb{Z}}(X , \mathfrak{s})$ is defined and non-zero). Let $\Sigma \subset X$ be an embedded Real surface of genus $g$.
\begin{itemize}
\item[(1)]{If $g>0$ and $[\Sigma]^2 \ge 0$, then the adjunction inequality holds:
\[
2g-2 \ge | \langle c(\mathfrak{s}) , [\Sigma] \rangle | + [\Sigma]^2.
\]
}
\item[(2)]{If $g=0$ then $[\Sigma]^2 \le 0$. Furthermore if $\sigma$ does not act freely and $[\Sigma]$ is non-torsion, then $[\Sigma]^2 < 0$.}
\end{itemize}

\end{theorem}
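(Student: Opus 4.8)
The plan is to run the standard Seiberg--Witten proof of the adjunction inequality (\cite{km,law,os}) $\sigma$-equivariantly. The essential observation is that a $\sigma$-invariant solution of the perturbed Seiberg--Witten equations is in particular an ordinary solution, so the analytic input of the classical argument---the Weitzenb\"ock formula, the resulting a priori bounds on the spinor and self-dual curvature in terms of scalar curvature and perturbation, and the neck-stretching compactness theorem---applies without change; only the auxiliary geometric data must be chosen equivariantly. The hypothesis $b_+(X)^{-\sigma}>1$ is used precisely to make $SW_R(X,\mathfrak{s})$, and $SW_{R,\mathbb{Z}}(X,\mathfrak{s})$ when it is defined, independent of the $\sigma$-invariant metric and the $\sigma$-anti-invariant perturbation $2$-form; as the space of such data is convex, the assumed non-vanishing provides a Real (hence ordinary) solution of the perturbed equations for \emph{every} admissible geometry, in particular the degenerating ones used below.

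For $(1)$, assume $g>0$ and $[\Sigma]^2\ge 0$. Since $\sigma$ restricts to an orientation-reversing involution of $\Sigma$, equivariant uniformisation gives a $\sigma|_\Sigma$-invariant metric on $\Sigma$ of constant scalar curvature $\tfrac{4\pi(2-2g)}{\mathrm{Area}(\Sigma)}\le 0$. Extend this to a $\sigma$-invariant metric on $X$ which, on a $\sigma$-invariant tubular neighbourhood $N$ of $\Sigma$, is the disc bundle of the normal bundle $\nu$ with a fibre metric of small radius; consider the family obtained by letting $\mathrm{Area}(\Sigma)\to\infty$ and the fibre radius $\to 0$. Also pick a small $\sigma$-anti-invariant perturbation $\eta$ and a $\sigma$-anti-invariant closed Thom form $\omega$ representing $\mathrm{PD}[\Sigma]$ and supported in $N$ (available because $\sigma^*\mathrm{PD}[\Sigma]=-\mathrm{PD}[\Sigma]$, e.g.\ by anti-symmetrising an ordinary Thom form over $\mathbb{Z}_2$). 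Then for any solution $(A,\Phi)$, the topological identity $\tfrac{i}{2\pi}\int_X F_A\wedge\omega=\langle c(\mathfrak{s}),[\Sigma]\rangle$, combined with the standard a priori control on the curvature of $(A,\Phi)$ near $\Sigma$---governed by $\tfrac{4\pi(2-2g)}{\mathrm{Area}(\Sigma)}$, the fibre radius and $[\Sigma]^2$---forces in the limit $2g-2\ge|\langle c(\mathfrak{s}),[\Sigma]\rangle|+[\Sigma]^2$. (Equivalently one may stretch the neck along the circle-bundle boundary of $N$ and rule out, $\sigma$-equivariantly, finite-energy Real Seiberg--Witten solutions there; either way the mechanism is local and blind to the involution.) Since a solution exists, part $(1)$ follows.

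For $(2)$, first let $g=0$ and $[\Sigma]^2>0$. An equivariant version of the classical tubing construction---take $\Sigma$ together with $k-1$ further sections of $\nu$ forming a $\sigma$-invariant collection, chosen generically subject to equivariance (so that mutual intersections are transverse, positive, and permuted by $\sigma$), and resolve all intersection points $\sigma$-equivariantly---produces a Real embedded surface $\Sigma_k$ representing $k[\Sigma]$ of genus $g_k=1-k+\binom{k}{2}[\Sigma]^2$, positive for $k$ large. Applying $(1)$ to $\Sigma_k$ gives $2g_k-2\ge k|\langle c(\mathfrak{s}),[\Sigma]\rangle|+k^2[\Sigma]^2$, which reduces to $-[\Sigma]^2-2\ge|\langle c(\mathfrak{s}),[\Sigma]\rangle|\ge0$---a contradiction; hence $[\Sigma]^2\le0$. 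For the remaining case $[\Sigma]^2=0$ with $\sigma$ not free and $[\Sigma]$ non-torsion, the tubular neighbourhood of $\Sigma$ is $S^2\times D^2$ with boundary $S^2\times S^1$; stretching the neck $\sigma$-equivariantly along $S^2\times S^1$ with a metric round and small on the $S^2$ factor removes all irreducible solutions near the neck, so the compactness theorem forces the Real solutions provided by $SW_R(X,\mathfrak{s})\neq0$ to break into reducibles. The hypotheses that $\sigma$ is not free and $[\Sigma]$ is non-torsion are exactly what allow one to exclude these reducibles, by a generic choice of $\sigma$-invariant metric and $\sigma$-anti-invariant perturbation avoiding the Real reducible locus (using $b_+(X)^{-\sigma}>1$, as in the construction of $SW_R$), which is again a contradiction; hence $[\Sigma]^2<0$.

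The hard part is the equivariance bookkeeping rather than any genuinely new analysis: one must check that the a priori estimates and the neck-stretching compactness hold for $\sigma$-invariant configurations with uniform constants; that the $\sigma$-anti-invariant perturbations needed for transversality of the Real Seiberg--Witten moduli spaces can be made generic while remaining small enough for the estimates above; and that the equivariant constructions (invariant adiabatic metrics, invariant Thom forms, invariant higher-genus surfaces with equivariant resolutions of their intersection points) can indeed be carried out. A further subtlety, which is exactly where the hypotheses ``$\sigma$ not free'' and ``$[\Sigma]$ non-torsion'' enter in $(2)$, is the correct treatment of the Real analogue of reducible solutions and of wall-crossing.
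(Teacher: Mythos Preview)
Your outline for part (1) and for the $[\Sigma]^2>0$ half of part (2) is broadly reasonable, and in fact takes a somewhat different route from the paper: the paper reduces $[\Sigma]^2>0$ to $[\Sigma]^2=0$ by performing Real blowups (at a fixed point of $\sigma|_\Sigma$, or at a pair $x,\sigma(x)$ when $\sigma|_\Sigma$ is free, using Proposition~\ref{prop:deg2} to see $[\Sigma]^2$ is then even) and then stretches along the circle bundle and invokes the Mrowka--Ozsv\'ath--Yu classification, whereas you propose a direct adiabatic argument for (1) and the classical $k$-fold tubing for (2). Both strategies are viable, though your equivariant tubing needs more care than you indicate (e.g.\ when $\sigma$ fixes an intersection point of two sections the local resolution must be checked to be $\sigma$-equivariant and to preserve orientability).

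The genuine gap is the case $g=0$, $[\Sigma]^2=0$, $[\Sigma]$ non-torsion, $\sigma$ not free. Your sketch here does not work. Stretching along $S^2\times S^1$ with a round metric kills \emph{irreducible} solutions on the cylinder, but the reducible solution on $S^2\times S^1$ (in the spin$^c$-structure with $\langle c(\mathfrak{s}),[\Sigma]\rangle=0$) survives, and its presence is perfectly compatible with $SW_R(X,\mathfrak{s})\neq 0$; a generic $\sigma$-anti-invariant perturbation on $X$ does not ``exclude'' it, and neither hypothesis ``$\sigma$ not free'' nor ``$[\Sigma]$ non-torsion'' plays the role you assign it. The paper's argument is quite different and is an equivariant adaptation of Fintushel--Stern: one uses a fixed point of $\sigma$ on $X$ (this is where ``$\sigma$ not free'' enters) to perform a Real blowup $X'=X\#\overline{\mathbb{CP}^2}$ with Real exceptional sphere $E$, tubes $E$ equivariantly with $2n$ parallel copies of $\Sigma$ to obtain a Real $(-1)$-sphere $E_n$ with $[E_n]=[E]+2n[\Sigma]$, blows down $E_n$, and applies the Real blowup formula twice to produce Real spin$^c$-structures $\mathfrak{s}'_n$ on $X'$ with $SW_R(X',\mathfrak{s}'_n)\neq 0$ and $c(\mathfrak{s}'_n)=c(\mathfrak{s}')-2[E]-4n[\Sigma]$. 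Since $[\Sigma]$ is non-torsion (this is where that hypothesis enters), these are infinitely many distinct basic classes, contradicting the usual finiteness. You should replace your last paragraph with this argument.
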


In the case that $\sigma$ is free, our adjunction inequality reduces to the adjunction inequality proven by Nakamura \cite{nak}.

In the case of arbitrary self-intersection, we have the following:

\begin{theorem}
Let $X$ be a compact, oriented, smooth $4$-manifold and $\sigma$ a Real structure on $X$. Assume that $b_+(X)^{-\sigma} > 1$. Let $\mathfrak{s}$ be a Real spin$^c$-structure such that $SW_{R,\mathbb{Z}}(X , \mathfrak{s})$ is defined and non-zero. Let $\Sigma \subset X$ be an embedded Real surface of genus $g$. Assume that $\sigma$ does not act freely on $\Sigma$. Then
\[
2g \ge | \langle c(\mathfrak{s}) , [\Sigma] \rangle | + [\Sigma]^2.
\]
\end{theorem}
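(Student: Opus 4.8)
The plan is to reduce to the previous theorem when $[\Sigma]^2 \ge 0$ and to run an equivariant neck-stretching argument when $[\Sigma]^2 < 0$. First, replacing $\mathfrak{s}$ by its conjugate Real spin$^c$-structure $\overline{\mathfrak{s}}$ if necessary — for which $SW_{R,\mathbb{Z}}(X,\overline{\mathfrak{s}})$ is again defined and non-zero and $c(\overline{\mathfrak{s}}) = -c(\mathfrak{s})$ — we may assume $\langle c(\mathfrak{s}), [\Sigma]\rangle \ge 0$, so that it suffices to prove $2g \ge \langle c(\mathfrak{s}),[\Sigma]\rangle + [\Sigma]^2$.

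Suppose $[\Sigma]^2 \ge 0$. If $g \ge 1$, part (1) of the preceding theorem gives $2g - 2 \ge \langle c(\mathfrak{s}),[\Sigma]\rangle + [\Sigma]^2$, hence $2g \ge \langle c(\mathfrak{s}),[\Sigma]\rangle + [\Sigma]^2$. If $g = 0$, part (2) forces $[\Sigma]^2 \le 0$, so $[\Sigma]^2 = 0$; since $\sigma$ is not free on $\Sigma$ it is not free on $X$, so part (2) also forces $[\Sigma]$ to be torsion, whence $\langle c(\mathfrak{s}), [\Sigma]\rangle = 0$ and the inequality is trivial. This settles the case $[\Sigma]^2 \ge 0$.

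Now suppose $[\Sigma]^2 < 0$. Since then $\langle c(\mathfrak{s}),[\Sigma]\rangle + [\Sigma]^2 < \langle c(\mathfrak{s}),[\Sigma]\rangle$, it is enough to prove $\langle c(\mathfrak{s}), [\Sigma]\rangle \le 2g$. For this I would use the equivariant analogue of the Kronheimer--Mrowka neck-stretching argument. Fix a $\sigma$-invariant tubular neighbourhood of $\Sigma$, a disk bundle of Euler number $[\Sigma]^2$ over $\Sigma$ on which $\sigma$ acts with non-empty fixed set (using that $\sigma$ is not free on $\Sigma$), with boundary a circle bundle $Y \to \Sigma$ carrying an induced involution. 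Take a $\sigma$-invariant metric on $X$ that restricts to a metric of non-positive curvature on $\Sigma$ (hyperbolic if $g \ge 2$, flat if $g = 1$; the case $g = 0$ needs separate handling) and that develops a long cylindrical neck modelled on $Y \times [-T,T]$. Since $b_+(X)^{-\sigma} > 1$, the Real Seiberg--Witten invariant is independent of metric and perturbation and carries no reducible contributions, so $SW_{R,\mathbb{Z}}(X,\mathfrak{s}) \ne 0$ forces a non-empty perturbed Real moduli space for every $T$; that $SW_{R,\mathbb{Z}}$ (rather than merely the mod $2$ invariant) is defined plays, for surfaces of negative self-intersection, the role the simple type condition plays in the classical adjunction inequality. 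Letting $T \to \infty$ and invoking Real Seiberg--Witten compactness produces a limiting broken configuration through a finite-energy Real solution on the cylinder over $Y$; combining the Weitzenböck estimate for the Real equations (bounding the spinor pointwise by the negative part of the scalar curvature) with the curvature of the chosen metric and integrating over $\Sigma$ bounds $\langle c(\mathfrak{s}),[\Sigma]\rangle$ from above. The equivariant contribution of the fixed set of $\sigma$ on $Y$ — the tori lying over the fixed circles of $\sigma$ on $\Sigma$ — is what makes the resulting estimate $\langle c(\mathfrak{s}),[\Sigma]\rangle \le 2g$ rather than the sharper classical $\le 2g - 2$, and this discrepancy of $2$ is the source of the weaker bound in the theorem.

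The main obstacle is precisely this last step for $[\Sigma]^2 < 0$: making the neck-stretching and compactness analysis for the Real Seiberg--Witten equations rigorous near a Real surface on which $\sigma$ has one-dimensional fixed locus, identifying the limiting equivariant configuration on $Y$ together with its fixed-point contribution, and pinning down the constant. The cases $g = 0$ and $g = 1$, where $\Sigma$ admits no invariant metric of negative curvature, require their own treatment; for $g = 0$ this amounts to a Real blow-up formula, showing that a Real sphere of negative self-intersection supporting a non-zero Real Seiberg--Witten invariant satisfies $|\langle c(\mathfrak{s}), [\Sigma]\rangle| \le -[\Sigma]^2$.
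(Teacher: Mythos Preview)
Your treatment of the case $[\Sigma]^2 \ge 0$ is fine and matches the paper. The problem is the case $[\Sigma]^2 < 0$, which is the entire content of the theorem. What you have written there is not a proof but a hope: you sketch an equivariant Kronheimer--Mrowka neck-stretch, speculate that the fixed tori over the real circles contribute a correction of $2$, and then explicitly concede that making this rigorous is the main obstacle. As it stands there is no argument here, and in particular the role you assign to the hypotheses is not justified: you say that $SW_{R,\mathbb{Z}}$ being defined ``plays the role of simple type'', and that the non-freeness of $\sigma|_\Sigma$ is needed for the neck analysis near the fixed locus, but neither claim is backed by any mechanism.

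The paper's proof is completely different and avoids all of the analysis you are worried about. One takes a smooth real degree-$d$ hypersurface $M \subset \mathbb{CP}^3$ with $d \ge 4$ large enough that $d + [\Sigma]^2 > 0$, equipped with the complex-conjugation involution $\sigma_M$ and its canonical Real spin$^c$-structure $\mathfrak{s}_M$ (for which $SW_{R,\mathbb{Z}}(M,\mathfrak{s}_M) \neq 0$). Inside $M$ sits the real plane section $\Sigma_M = \{x_3 = 0\}$, of genus $(d-1)(d-2)/2$, self-intersection $d$, and with fixed points. One then forms the equivariant connected sum $X' = X \# M$ at fixed points of $\Sigma$ and $\Sigma_M$ --- this is exactly where the hypothesis that $\sigma$ is not free on $\Sigma$ is used --- and the connected sum formula (Proposition~\ref{prop:csf}) gives $SW_{R,\mathbb{Z}}(X',\mathfrak{s}\#\mathfrak{s}_M) \neq 0$; this is where the integrality of the invariant is used. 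The surface $\Sigma' = \Sigma \# \Sigma_M$ now has $[\Sigma']^2 = [\Sigma]^2 + d > 0$, so Theorem~\ref{thm:adjunctionr} applies to $\Sigma'$, and the arithmetic $(d-1)(d-2) - d(d-4) - d = 2$ collapses the resulting inequality to $2g \ge |\langle c(\mathfrak{s}),[\Sigma]\rangle| + [\Sigma]^2$. No direct analysis of Real monopoles near a surface of negative square is ever performed.
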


Note that while we do not need to assume $X$ is simple type for this inequality to hold, we do need to assume that the integer Real Seiberg--Witten invariant $SW_{R,\mathbb{Z}}(X,\mathfrak{s})$ is defined. In the case that $b_1(X) = 0$, this is equivalent to assuming the Real Seiberg--Witten moduli space for $(X,\mathfrak{s})$ has expected dimension zero.

An interesting feature of the (integral) Real Seiberg--Witten invariants is that they can be non-zero on connected sums, whereas the ordinary Seiberg--Witten invariant vanishes on any connected sum $X_1 \# X_2$ where both summands have $b_+ > 0$. Consequently one can easily find $4$-manifolds where the ordinary adjunction inequality is not applicable but the Real adjunction inequality holds. Using this type of argument we can construct many examples of involutions for which the Real minimal genus is strictly larger than then ordinary minimal genus. The following result is a demonstration of this.

\begin{theorem}
Let $X$ be one of the $4$-manifolds
\begin{itemize}
\item[(i)]{$\# a \mathbb{CP}^2 \# b \overline{\mathbb{CP}^2}$ where $a \ge 4$, $b \ge a+17$,}
\item[(ii)]{$\# a(S^2 \times S^2) \# b K3$ where $a,b \ge 1$.}
\end{itemize}
Then $X$ admits an involution $\sigma$ for which the following holds. Let $u \in H^2(X ; \mathbb{Z})$ be a non-zero class. Assume $u^2 \ge 0$ and in case (i) assume $u$ is a multiple of a primitive ordinary class. Then $u$ can be represented by a smoothly embedded compact oriented surface of genus at most $u^2/2$. However any Real embedded surface representing $u$ has genus at least $u^2/2 + 1$.
\end{theorem}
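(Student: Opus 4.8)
The plan is to produce, for each manifold $X$ in the list, an explicit involution $\sigma$ together with a Real spin$^c$-structure $\mathfrak{s}$ whose integral Real Seiberg--Witten invariant is defined and non-zero, and to arrange matters so that $c(\mathfrak{s})$ is torsion (so that $\langle c(\mathfrak{s}),[\Sigma]\rangle = 0$) and so that $b_+(X)^{-\sigma} > 1$. Granting this, the arbitrary self-intersection adjunction inequality (the last displayed Theorem in the excerpt) applies to any Real embedded surface $\Sigma$ on which $\sigma$ does not act freely, and gives $2g \ge u^2$ for $u = [\Sigma]$, hence $g \ge u^2/2 + 1$ once we observe that $\sigma$ cannot act freely on $\Sigma$: since $u^2 \ge 0$ and $\sigma^*u = -u$ by Theorem~1.2 (using $b_1(X)=0$, $\sigma$ not free), a free action of $\sigma$ on $\Sigma$ would force $u^2 = -(\text{something})\le 0$ via the fixed-point-free quotient, and more to the point the Real condition together with $u \ne 0$ non-torsion and $u^2 \ge 0$ is incompatible with freeness on $\Sigma$ — this is exactly case (2), second bullet, of the preceding Theorem in the zero-genus direction, and an analogous parity/index argument rules it out in positive genus. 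The upper bound $g \le u^2/2$ for some (non-Real) embedded representative is the standard fact that in $\#a\mathbb{CP}^2 \# b\overline{\mathbb{CP}^2}$ and in connected sums involving $S^2\times S^2$ and $K3$ every class of non-negative square is represented by a surface of genus $\le u^2/2$ (obtained by tubing together embedded spheres and one torus, using that $S^2\times S^2$, $\mathbb{CP}^2$, $\overline{\mathbb{CP}^2}$, $K3$ all contain the requisite low-genus representatives of a generating set); in case (i) the hypothesis that $u$ is a multiple of a primitive ordinary class is what lets us reduce to a single $\mathbb{CP}^2$ or $\overline{\mathbb{CP}^2}$ summand.

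The construction of $(\sigma,\mathfrak{s})$ is the heart of the argument and proceeds by the connected-sum non-vanishing phenomenon for $SW_{R,\mathbb{Z}}$ advertised in the paragraph before the statement. First I would fix a ``seed'' pair $(N,\tau)$ — a compact oriented $4$-manifold with a non-free orientation-preserving involution and a Real spin$^c$-structure with $SW_{R,\mathbb{Z}}(N,\tau)\ne 0$; candidates are $S^4$ or $\mathbb{CP}^2$ with the standard conjugation-type involutions, for which the Real moduli space is a point and the invariant is $\pm 1$. Then I would take the equivariant connected sum of $(N,\tau)$ with copies of $(S^2\times S^2, \text{swap})$, $(\mathbb{CP}^2\#\overline{\mathbb{CP}^2},\ldots)$, $(K3, \text{free or non-free involution})$, etc., performed along $\sigma$-fixed points or along free orbits so that the underlying manifold becomes the desired $X$; since each added summand has a Real spin$^c$-structure with expected dimension $0$ and invariant $\pm 1$, a Real connected-sum/gluing formula for $SW_{R,\mathbb{Z}}$ (of the type proven in \cite{bar3}) gives that the resulting invariant is a product of $\pm 1$'s, hence non-zero. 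I would choose the spin$^c$-structures on the summands to have torsion (ideally zero) first Chern class so that $c(\mathfrak{s})$ is torsion on $X$; for $\#a\mathbb{CP}^2\#b\overline{\mathbb{CP}^2}$ and $\#a(S^2\times S^2)\# bK3$ this is possible because these manifolds admit spin$^c$-structures with torsion $c_1$. Finally $b_+(X)^{-\sigma}>1$ must be verified: the numerical hypotheses $a\ge 4$, $b\ge a+17$ in (i) and $a,b\ge1$ in (ii) are precisely what is needed to make the $(-1)$-eigenspace of $\sigma^*$ on $H^+$ at least $2$-dimensional, which one checks from how the equivariant sum distributes the positive cohomology among eigenspaces of the swap and the chosen involutions on $K3$.

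The main obstacle I anticipate is the simultaneous bookkeeping: one must choose the involutions on the summands, the gluing loci (fixed points vs.\ free orbits), and the spin$^c$-structures so that \emph{all} of the following hold at once — the underlying manifold is exactly the one named (this constrains signatures and fixed-point data via the $G$-signature theorem), $c(\mathfrak{s})$ is torsion, $SW_{R,\mathbb{Z}}$ is still \emph{defined} (expected dimension zero, which pins down $c(\mathfrak{s})^2$ in terms of $b_+^{-\sigma}$ and an equivariant index) and non-zero, and $b_+^{-\sigma}>1$. The inequalities $a\ge 4$ and $b\ge a+17$ look like exactly the slack one needs to absorb the signature/index constraints while keeping the $(-1)$-eigenspace large enough; verifying that these specific bounds suffice, and no smaller ones are forced on us, is the delicate computational core. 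A secondary point requiring care is confirming that $\sigma$ genuinely does not act freely on \emph{every} Real representative $\Sigma$ of $u$ (not merely on some convenient one), so that the arbitrary-self-intersection theorem truly applies; here one argues from $\sigma^*u=-u$ and $u^2\ge 0$ with $u$ non-torsion, since a free involution on $\Sigma$ would give $[\Sigma]$ in the kernel of a transfer-type map forcing $u^2\le 0$ with equality only for torsion classes, contradicting our hypotheses.
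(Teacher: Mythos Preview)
Your strategy for the lower bound has two genuine gaps, and both stem from choosing the wrong adjunction inequality.

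First, the arithmetic: the arbitrary--self-intersection inequality you invoke gives only $2g \ge |\langle c(\mathfrak{s}),[\Sigma]\rangle| + [\Sigma]^2$, and with $c(\mathfrak{s})$ torsion this is $2g \ge u^2$, i.e.\ $g \ge u^2/2$. That is one short of the required $g \ge u^2/2 + 1$; there is no way to recover the missing ``$+1$'' from this inequality alone. The paper instead uses the \emph{non-negative} self-intersection inequality (your ``preceding Theorem''), which gives $2g-2 \ge |\langle c(\mathfrak{s}),[\Sigma]\rangle| + [\Sigma]^2 \ge [\Sigma]^2$ when $g>0$, and hence $g \ge u^2/2 + 1$ directly. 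The genus-zero case is excluded separately by part~(2) of that same theorem, since $u$ is non-torsion and $u^2 \ge 0$.

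Second, your plan to arrange $c(\mathfrak{s})$ torsion is impossible in case~(i): $\#a\mathbb{CP}^2 \# b\overline{\mathbb{CP}^2}$ has torsion-free $H^2$ and is not spin, so every spin$^c$-structure has $c(\mathfrak{s}) \ne 0$ and non-torsion. The paper never needs $c(\mathfrak{s})$ torsion; it simply discards the term $|\langle c(\mathfrak{s}),[\Sigma]\rangle|$ since it is non-negative. Once you switch to the non-negative self-intersection inequality, the hypothesis that $\sigma$ act non-freely on $\Sigma$ disappears as well, so your hand-wavy argument for that (which is not correct as stated: freeness of $\sigma|_\Sigma$ does not by itself force $u^2 \le 0$) becomes unnecessary.

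For the construction of $\sigma$ you are broadly on the right track (equivariant connected sums of building blocks with non-zero $SW_{R,\mathbb{Z}}$), but the paper is more concrete: in case~(i) it realises $X$ as $K3 \# (a-3)(S^2\times S^2) \# (b-a-16)\overline{\mathbb{CP}^2}$ with an odd $K3$-involution, the swap on each $S^2\times S^2$, and conjugation on each $\overline{\mathbb{CP}^2}$; the bounds $a\ge 4$, $b\ge a+17$ simply ensure the summand counts are positive. Your upper-bound sketch (``tubing spheres and one torus'') is also too loose; the paper appeals to Wall's theorems on diffeomorphisms and orthogonal groups to reduce to a single $\Gamma$-orbit and then to explicit minimal-genus representatives in $S^2\times S^2$ and $\mathbb{CP}^2$.
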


%%%%%%%%%%%%%%%%%%%%%%%%%%%%%%%%%%%%%%%%%%%%%%%%%%%%%%%%%%%%%%%%%%%%%%
\subsection{Structure of the paper}

The structure is the paper is as follows. In Section \ref{sec:realsurface} we prove various fundamental results concerning Real embedded surfaces, in particular Theorem \ref{thm:realise} which completely describes which classes in $H^2$ can be realised by Real embedded surfaces. In Section \ref{sec:rsw} we recall the necessary background on the Real Seiberg--Witten invariants including the behaviour of the invariants under blowup and connected sum. In Section \ref{sec:adj1} we prove the adjunction inequality for Real embedded surfaces of non-negative self-intersection (Theorem \ref{thm:adjunctionr}) and in Section \ref{sec:adj2} we prove the adjunction inequality for Real embedded surfaces with arbitrary self-intersection (Theorem \ref{thm:adjunctionr2}). Lastly in Section \ref{sec:ex} we consider examples of $4$-manifolds with Real structure and with non-vanishing Real Seiberg--Witten invariants. In particular we prove Theorem \ref{thm:minrg} giving many examples where the minimal genus for Real embedded surfaces is strictly larger than the minimal genus for arbitrary embedded surfaces. We finish with some examples where the exact value of the minimal genus for Real embedded surfaces can be calculated, or where upper and lower bounds can be found.

%%%%%%%%%%%%%%%%%%%%%%%%%%%%%%%%%%%%%%%%%%%%%%%%%%%%%%%%%%%%%%%
\noindent{\bf Acknowledgments.} The author was financially supported by an Australian Research Council Future Fellowship, FT230100092.

%%%%%%%%%%%%%%%%%%%%%%%%%%%%%%%%%%%%%%%%%%%%%%%%%%%%%%%%
\section{Real surfaces}\label{sec:realsurface}

Unless stated otherwise, a Real embedded surface is always assumed to be connected. If $\Sigma$ is a Real embedded surface in a $4$-manifold $X$ then the Real structure on $X$ restricts to an orientation reversing involution $\sigma$ on $\Sigma$. Since $\sigma$ is orientation reversing, the fixed point set of $\sigma$ is necessarily of odd codimension and is thus given by the union of $n$ disjoint embedded loops. The complement $\Sigma \setminus \Sigma^\sigma$ has either one or two components. Set $a=0$ if the complement is disconnected and $a=1$ if it is connected. Let $g$ denote the genus of $\Sigma$. It is known that up to diffeomorphism, the orientation reversing involutions on a genus $g$ surface are classified by the pair $(n,a)$. Furthermore the pair $(n,a)$ is realised by an orientation reversing involution if and only if the following conditions are satisfied (\cite[\textsection 3]{gh}):
\begin{itemize}
\item[(1)]{$0 \le n \le g+1$.}
\item[(2)]{If $n=0$ then $a=1$. If $n = g+1$ then $a=0$.}
\item[(3)]{If $a=0$ then $n = g+1 \; ({\rm mod} \; 2)$.}
\end{itemize}

Suppose $\Sigma \subset X$ is a Real embedded surface in a Real $4$-manifold $(X , \sigma)$. We can modify the topology of $\Sigma$ without changing the homology class $[\Sigma]$ as follows. First, we can always attach a pair of trivially embedded handles such that they are exchanged by $\sigma$. This increases $g$ by two without changing $n$. If $a=1$, then the resulting surface also has $a=1$. If $a=0$, the resulting surface can either have $a=0$ or $a=1$ depending on the location of the attached handles. Second, if the fixed point set of $\sigma$ acting on $X$ is non-empty and contains an embedded surface $S$, then we can attach a single handle to $\Sigma$ such that the handle meets $S$ in a circle and such that $\sigma$ maps the handle to itself. This operation increases $g$ and $n$ by $1$ and leaves $a$ unchanged. To see that this operation is possible, consider the quotient map $\rho : X \to X/\sigma$. If $\sigma$ has isolated fixed points then $X/\sigma$ is an orbifold, but $\Sigma_0 = \rho(\Sigma)$ is disjoint from the orbifold points. Note also that $\Sigma_0$ is a surface with boundary. Now remove a disc from $\Sigma_0$ and attach a cylinder with one end the boundary of the removed disc and the other end a circle on $\rho(S)$. Let $\Sigma'_0$ denote the resulting surface. Then setting $\Sigma' = \rho^{-1}(\Sigma)$, it follows that $\Sigma'$ is obtained from $\Sigma$ by attaching a handle and this handle meets $S$ in a circle.

Using the two operations described above, the genus of a Real embedded surface representing a given class in $\alpha \in H^2(X ; \mathbb{Z})$ can be increased arbitrarily (or by an arbitrary even amount in the case that $\sigma$ has only isolated fixed points). Therefore it is natural to consider the minimal possible genus of a Real embedded surface in $X$ representing $\alpha$. We denote the minimal genus by $g^{min}_R(\alpha)$. If $\alpha$ can not be represented by a Real surface then we set $g^{min}_R(\alpha) = \infty$. Since $\sigma$ restricts to an orientation reversing involution on $\Sigma$ an obvious necessary condition for $\alpha$ to be representable by a Real embedded surface is that $\sigma^*(\alpha) = -\alpha$.

\begin{remark}
One can ask a slightly more general problem than just the minimal genus. Given a class $\alpha \in H^2(X ; \mathbb{Z})$ what are the possible topological types $(g,n,a)$ of a Real surface representing $\alpha$?
\end{remark}

Let $\Sigma \subset X$ be an embedded Real surface. Then then inclusion map $i : \Sigma \to X$ is $\mathbb{Z}_2$-equivariant and hence we obtain a push-forward map in equivariant cohomology $i_* : H^*(\Sigma ; \mathbb{Z}_-) \to H^{*+2}(X ; \mathbb{Z}_-)$. Here $\mathbb{Z}_-$ dentoes the equivariant local system given by the sign representation of $\mathbb{Z}_2 = \langle \sigma \rangle$ on $\mathbb{Z}$. In particular, $\Sigma$ has an {\em equivariant fundamental class} defined by $[\Sigma]_{\mathbb{Z}_2} = i_*(1) \in H^2_{\mathbb{Z}_2}(X ; \mathbb{Z}_-)$. The image of $[\Sigma]_{\mathbb{Z}_2}$ under the forgetful map $H^2_{\mathbb{Z}_2}(X ; \mathbb{Z}_-) \to H^2(X ; \mathbb{Z})$ is the usual fundamental class $[\Sigma] \in H^2(X ; \mathbb{Z})$ (where we use Poincar\'e duality to regard the fundamental class as a cohomology class). In particular, a class $\alpha \in H^2(X ; \mathbb{Z})$ is representable by a Real embedded surface only if it lies in the image of the forgetful map $H^2_{\mathbb{Z}_2}(X ; \mathbb{Z}_-) \to H^2(X ; \mathbb{Z})$. We will soon show that the converse of this result is also true, but first we explain how equivariant fundamental classes are related to Real line bundles.

A {\em Real line bundle} on $X$ is a complex line bundle $L \to X$ and a lift $\sigma_L : L \to L$ of $\sigma$ to an antilinear involution on $L$. We use a capital R to distinguish from real line bundles in the ordinary sense. It is known that Real line bundles are classified by $H^2_{\mathbb{Z}_2}(X ; \mathbb{Z}_-)$ (here we are making use of the isomorphism \cite{sti} between Borel equivariant cohomology and equivariant sheaf cohomology. The fact that Real line bundles are classified by degree $2$ equivariant sheaf cohomology with values in $\mathbb{Z}_-$ follows from the exponential sequence in exactly the same way as for ordinary line bundles). This suggests a connection to Real embedded surfaces. A section $s : X \to L$ of $L$ is said to be Real if $\sigma_L \circ s = s \circ \sigma$. If $s$ is Real and transverse to the zero section, then the zero locus of $s$ is a Real surface (possibly disconnected).

\begin{proposition}\label{prop:assL}
Let $\Sigma$ be a (possibly disconnected) Real embedded surface in $X$. Then there is a Real line bundle $L \to X$ and a Real section $s$ of $L$ which is transverse to the zero section and whose zero locus is $\Sigma$. Moreover, the equivariant fundamental class of $\Sigma$ equals the equivariant first Chern class of $L$.
\end{proposition}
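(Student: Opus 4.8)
The plan is to construct $L$ as the normal bundle of $\Sigma$, extended to all of $X$, equipped with a Real structure coming from the fact that $\sigma$ preserves $\Sigma$ and acts orientation-reversingly on it. First I would observe that since $\sigma$ maps $\Sigma$ to itself reversing orientation, the differential of $\sigma$ restricted to the normal bundle $\nu$ of $\Sigma$ in $X$ is a bundle automorphism covering $\sigma|_\Sigma$. Because $X$ is oriented, $\sigma$ is orientation-preserving on $X$, and $\sigma|_\Sigma$ is orientation-reversing, it follows that $\sigma$ acts orientation-\emph{reversingly} on the fibres of $\nu$ as well. Choosing a $\sigma$-invariant Riemannian metric on $X$ (average an arbitrary one), $\nu$ inherits a fibrewise inner product, and an orientation-reversing isometry of an oriented $2$-plane is (after a rotation) conjugation; so $\nu$ acquires the structure of a complex line bundle on which $\sigma$ acts antilinearly, i.e. a Real line bundle $L_\Sigma \to \Sigma$ in the sense defined above. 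The tautological section of $\nu$ pulled back along a tubular-neighbourhood retraction gives a Real section of $L_\Sigma$ over a $\sigma$-invariant tubular neighbourhood $U$ of $\Sigma$, transverse to zero with zero locus exactly $\Sigma$.

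Next I would extend $(L_\Sigma, \sigma_L)$ from $U$ to all of $X$. The standard way is: $L_\Sigma$ on $U$ together with the trivial Real line bundle on $X \setminus \Sigma$ (with the trivial antilinear $\sigma$-action on $\mathbb{C}$, which is allowed since $\sigma$ has no fixed points there — or more carefully, one uses that the section is non-vanishing there) are glued using the non-vanishing section $s$ on the overlap $U \setminus \Sigma$; concretely, away from $\Sigma$ the section trivialises $L_\Sigma$ and this trivialisation can be taken $\sigma$-equivariantly because $s$ is Real. This produces a Real line bundle $L \to X$ and a global Real section $s$, transverse to the zero section, with zero locus $\Sigma$. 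Equivalently, and perhaps cleaner to write, one can invoke the classification: Real line bundles are classified by $H^2_{\mathbb{Z}_2}(X;\mathbb{Z}_-)$, take $L$ to be the Real line bundle with $c_{\mathbb{Z}_2}^R(L) = [\Sigma]_{\mathbb{Z}_2} = i_*(1)$, and then a standard obstruction/transversality argument (equivariant, using a $\sigma$-invariant generic perturbation) produces a Real section whose zero set is an embedded Real surface in the class $[\Sigma]_{\mathbb{Z}_2}$; but to get the zero locus to be \emph{exactly} the given $\Sigma$ the tubular-neighbourhood construction is the direct route.

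Finally, for the identification of characteristic classes: the Real line bundle $L$ restricted to $U$ is, by construction, the normal bundle $\nu = \nu_{\Sigma/X}$ with its Real structure, so $c^R_1(L)$ is represented equivariantly by the Thom class of $\nu$ pushed forward, which is precisely $i_*(1) = [\Sigma]_{\mathbb{Z}_2}$ — this is just the equivariant version of the familiar fact that the first Chern class of a line bundle with transverse section equals the Poincaré dual of the zero locus. Under the forgetful map $H^2_{\mathbb{Z}_2}(X;\mathbb{Z}_-) \to H^2(X;\mathbb{Z})$ this recovers $c_1(L) = [\Sigma]$, consistent with the preceding discussion. The main obstacle I anticipate is bookkeeping with the local systems and orientations: one must check carefully that the antilinear involution on $\nu$ is genuinely a Real structure (antilinear, covering $\sigma$, squaring to the identity) and that the resulting class lands in $H^2_{\mathbb{Z}_2}$ with coefficients in $\mathbb{Z}_-$ rather than the trivial coefficient system — the sign twist is exactly what encodes the orientation-reversal of $\sigma$ on the fibres of $\nu$ — and that the gluing/extension over $X \setminus \Sigma$ respects all of this equivariantly. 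The transversality statement itself is routine once an equivariant Sard–Smale argument is in place, since $\Sigma$ is already cut out transversally by the tautological normal section.
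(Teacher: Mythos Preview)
Your proposal is correct and follows essentially the same route as the paper: both construct $L$ by gluing the (Real) normal bundle over a $\sigma$-invariant tubular neighbourhood to the trivial Real line bundle on $X\setminus\Sigma$ via the tautological section, and both identify the equivariant first Chern class with $[\Sigma]_{\mathbb{Z}_2}$ through the Thom class of the normal bundle. The paper's only substantive addition is that it spells out the Thom-class step in detail (excision $H^2_{\mathbb{Z}_2}(DN,DN^*;\mathbb{Z}_-)\cong H^2_{\mathbb{Z}_2}(X,X_0;\mathbb{Z}_-)$ and an explicit homotopy between the zero section and the tautological section inside $DN\times_\Sigma DN$), whereas you invoke it as ``the equivariant version of the familiar fact''; your parenthetical about $\sigma$ having no fixed points on $X\setminus\Sigma$ is unnecessary (and generally false), but as you note, the trivial Real line bundle $(\mathbb{C},\text{conjugation})$ works regardless.
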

\begin{proof}
The construction of $L$ is a smooth analogue of the construction of the line bundle associated to a divisor in a complex manifold. Let $U_1$ be a $\sigma$-invariant tubular neighbourhood of $\Sigma$ and $U_2 = X \setminus \Sigma$. We can identify $U_1$ with a disc bundle in the normal bundle $N$ on $\Sigma$. The derivative of $\sigma$ defines a Real structure on $N_\Sigma$. Let $L_1 \to U_1$ be the pullback of $N$ to $U_1$, equipped with the pullback Real structure. Since $U_1$ is a subset of $N$, we obtain a tautological section $s_1 : U_1 \to L_1$ which is clearly a Real section of $L_1$. Moreover $s_1$ is transverse to the zero section and the zero locus is precisely $\Sigma$. Now set $L_2 = \mathbb{C}$ the trivial line bundle with real struture given by complex conjugation. Set $U_{12} = U_1 \cap U_2$. Then $s_1$ is non-vanishing on $U_{12}$ and defines an isomorphism $L_2 |_{U_{12}} \buildrel s_1 \over \longrightarrow L_1|_{U_{12}}$ of Real line bundles on $U_{12}$. Hence we can use $s_1|_{U_{12}}$ to patch together $L_1$ and $L_2$ defining a Real line bundle $L$ on $X$. Moreover we have a Real section $s : X \to L$ of $L$ which on $U_1$ is given by $s_1$ and on $U_2$ is given by the constant section $1$. Clearly $s$ is transverse to the zero section and the zero locus is $\Sigma$.

Let $\alpha \in H^2_{\mathbb{Z}_2}(X ; \mathbb{Z}_-)$ denote the equivariant fundamental class of $\Sigma$ and let $c \in H^2_{\mathbb{Z}_2}(X ; \mathbb{Z}_-)$ denote the equivariant first Chern class of $L$. We will show that $\alpha = c$. Recall that $\alpha = i_*(1)$ where $i$ is the inclusion $i : \Sigma \to X$. Let $N \to \Sigma$ be the normal bundle of $\Sigma$. Let $DN$ denote the closed unit disc bundle of $N$ and $DN^*$ the complement in $DN$ of the zero section. Let $\tau_N \in H^2_{\mathbb{Z}_2}( DN , DN^* ; \mathbb{Z}_-)$ denote the Thom class of $N$. Identify $DN$ with a tubular neighbourhood of $\Sigma$ and let $\iota : DN \to X$ be the inclusion map. Let $X_0 = X \setminus \Sigma$. Then $\alpha = i_*(1)$ is given by the image of $\tau_N$ under the composition of excision isomorphism $H^2_{\mathbb{Z}_2}(DN , DN^* ; \mathbb{Z}_-) \to H^2_{\mathbb{Z}_2}(X , X_0 ; \mathbb{Z}_-)$ followed by the forgetful map $H^2_{\mathbb{Z}_2}(X , X_0 ; \mathbb{Z}_-) \to H^2_{\mathbb{Z}_2}(X ; \mathbb{Z}_-)$. Put differently, define $\widetilde{\alpha} \in H^2_{\mathbb{Z}_2}(X , X_0 ; \mathbb{Z}_-)$ to correspond to $\tau_N$ under excision, that is, $\iota^*(\widetilde{\alpha}) = \tau_N$. Then $\alpha$ is the image of $\widetilde{\alpha}$ under the forgetful map. Next, let $\tau_L \in H^2_{\mathbb{Z}_2}(DL , DL^* ; \mathbb{Z}_-)$ denote the Thom class of $L$. The zero section $s_0$ of $L$ defines a map of pairs $s_0 : (X , \emptyset) \to (DL,DL^*)$ and $c = s_0^*(\tau_L)$. Recall that we constructed a section $s : X \to L$ whose zero locus is $\Sigma$. Furthermore $s$ is valued in $DL$, so defines a map $s : (X  , \emptyset) \to (DL,DL^*)$. Since $s$ and $s_0$ are homotopic, we have $c = s^*(\tau_L)$. Note that $s|_{X_0}$ is non-vanishing so $s$ defines a map of pairs $s : (X,X_0) \to (DL , DL^*)$. Set $\widetilde{c} \in H^2_{\mathbb{Z}_2}(X , X_0 ; \mathbb{Z}_-)$ to be the pullback of $\tau_L$ under this map. Then $\widetilde{c}$ is a lift of $c$. To show that $\alpha = c$ it suffices to show that $\widetilde{\alpha} = \widetilde{c}$. In fact, since $\iota^* : H^2_{\mathbb{Z}_2}( X , X_0 ; \mathbb{Z}_-) \to H^2_{\mathbb{Z}_2}( DN , DN^* ; \mathbb{Z}_-)$ is an isomorphism, it suffices to show that $\iota^*(\widetilde{\alpha}) = \iota^*(\widetilde{c})$. Or since $\iota^*(\widetilde{\alpha}) = \tau_N$, we need to show that $\iota^*(\widetilde{c}) = \tau_N$.

From the construction of $L$ it follows that there is an isomorphism $N \cong L|_\Sigma$. Let $j : N \to L$ be the map corresponding under this isomorphism to the inclusion $L|_\Sigma \to L$. By restriction, we obtain a map $j : DN \to DL$. Since $N \cong L|_\Sigma$, it follows that $j^*(\tau_L) = \tau_N$. Next, observe that since $L|_\Sigma \cong N$, $DL|_{DN}$ can be identified with the fibre product $DN \times_{\Sigma} DN$. Then $j : DN \to DL$ is the map $j(u) = (0,u)$ and $s \circ \iota : DN \to DL$ is the map $s(u) = (u,u)$ (because $s$ was defined over $DN$ to be the tautological section). There is a homotopy $h_t : (DN , DN^*) \to (DL,DL^*)$ from $j$ to $s \circ \iota$ given by $h_t(u) = (tu,u)$. Hence $j^* = \iota^* \circ s^*$ as maps $H^2_{\mathbb{Z}_2}(DL , DL^* ; \mathbb{Z}_-) \to H^2_{\mathbb{Z}_2}(DN , DN^* ; \mathbb{Z}_-)$. Hence
\[
\tau_N = j^*(\tau_L) = \iota^* s^*(\tau_L) = \iota^*( \widetilde{c}),
\]
which is what we needed to show.
\end{proof}

\begin{theorem}\label{thm:realise}
A class $\alpha \in H^2(X ; \mathbb{Z})$ can be represented by a Real embedded surface if and only if $\alpha$ lies in the image of the forgetful map $H^2_{\mathbb{Z}_2}(X ; \mathbb{Z}_-) \to H^2(X ; \mathbb{Z})$.
\end{theorem}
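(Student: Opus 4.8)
The plan is to prove the two implications separately. The "only if" direction is already essentially done in the discussion preceding the statement: if $\Sigma \subset X$ is a Real embedded surface, then its equivariant fundamental class $[\Sigma]_{\mathbb{Z}_2} = i_*(1) \in H^2_{\mathbb{Z}_2}(X ; \mathbb{Z}_-)$ maps to $[\Sigma] \in H^2(X ; \mathbb{Z})$ under the forgetful map, so $\alpha = [\Sigma]$ lies in the image. So the content is entirely in the "if" direction.

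For the converse, suppose $\alpha$ is the image of some $\tilde\alpha \in H^2_{\mathbb{Z}_2}(X ; \mathbb{Z}_-)$ under the forgetful map. Since Real line bundles on $X$ are classified by $H^2_{\mathbb{Z}_2}(X ; \mathbb{Z}_-)$ (via the equivariant first Chern class), there is a Real line bundle $L \to X$ with $c_1^{\mathbb{Z}_2}(L) = \tilde\alpha$, and hence with ordinary first Chern class $c_1(L) = \alpha$. The next step is to produce a Real section of $L$ that is transverse to the zero section: one picks any smooth section $s_0$ of $L$ transverse to the zero section, and then averages, setting $s = \tfrac12(s_0 + \sigma_L \circ s_0 \circ \sigma)$, which is automatically Real; a small perturbation within the space of Real sections makes $s$ transverse to the zero section (transversality is a generic condition in the Banach space of Real sections, since the zero locus away from $X^\sigma$ carries no constraint and near $X^\sigma$ the Real condition still leaves enough freedom — this is where one must be slightly careful). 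The zero locus $\Sigma = s^{-1}(0)$ is then a Real embedded surface (a priori possibly disconnected), and by the same Thom-class argument as in Proposition \ref{prop:assL} its equivariant fundamental class is $c_1^{\mathbb{Z}_2}(L) = \tilde\alpha$, so in particular $[\Sigma] = \alpha$ in $H^2(X;\mathbb{Z})$.

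Finally one must arrange connectedness, since the paper's convention is that Real embedded surfaces are connected. If $\Sigma = \Sigma_1 \sqcup \cdots \sqcup \Sigma_k$, one connects the components up by attaching tubes. Here there is a subtlety: the components come in $\sigma$-orbits. If $\sigma$ maps $\Sigma_i$ to $\Sigma_j$ with $i \neq j$, choose a $\sigma$-invariant embedded arc joining them (avoiding $X^\sigma$ if $\sigma$ has isolated fixed points) and tube along it together with its $\sigma$-image — this is an equivariant ambient surgery that does not change $[\Sigma]_{\mathbb{Z}_2}$. If a component $\Sigma_i$ is $\sigma$-invariant and $\sigma|_{\Sigma_i}$ reverses orientation, it already contributes a genuine Real piece, and one tubes it to the rest along a $\sigma$-equivariant arc (using the fixed-point-connecting construction from the text when necessary). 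A possible obstruction is that $\sigma$ might fix a component $\Sigma_i$ orientation-preservingly; but $\sigma$ is orientation preserving on $X$ and an orientation-preserving involution of an oriented surface with nonempty complement-of-fixed-locus being two-sided would force $\Sigma_i^\sigma$ to be of even codimension, and a short argument (or a further small perturbation of $s$) rules this out or eliminates such a component by merging it equivariantly with its partner. Carrying out this last bookkeeping cleanly — ensuring the tubing is done $\sigma$-equivariantly and preserves the homology class — is the part that requires the most care, though it is entirely elementary.
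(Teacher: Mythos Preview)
Your overall strategy matches the paper's: lift $\alpha$ to a Real line bundle $L$, find a Real section transverse to zero, then connect components equivariantly. The difference, and the gap, is in how you obtain the transverse Real section.

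You propose averaging followed by a generic perturbation in the space of Real sections, noting only that ``near $X^\sigma$ the Real condition still leaves enough freedom --- this is where one must be slightly careful.'' But you never carry out that careful step, and it is the entire content of the argument. At an isolated fixed point $p$, a Real section takes a real value $s(p)\in\mathbb{R}\subset\mathbb{C}$, while the constraint $s(-x)=\overline{s(x)}$ forces $ds(p)$ to land in $i\mathbb{R}$; thus a Real section vanishing at $p$ is \emph{never} transverse there, and one must argue that generically $s(p)\neq 0$. Along a surface component $S\subset X^\sigma$, writing $L|_S$-neighbourhood as $A\oplus A$ with $\sigma_L(a_1,a_2)=(\sigma a_1,-\sigma a_2)$, a Real section has $s_2|_S\equiv 0$, and transversality at a zero of $s_1|_S$ requires \emph{both} that $s_1|_S$ is transverse in $A$ \emph{and} that the normal derivative of $s_2$ is nonzero there. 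These are generic, but saying so is not the same as showing it. The paper sidesteps equivariant transversality entirely: it builds $s$ explicitly on tubular neighbourhoods of $X^\sigma$ (constant $1$ near isolated points; an explicit pair $(b,\sum_l\psi_l f_l c_l)$ near surface components, with $f_l$ linear in the normal direction), and then extends over the free part by descending to the quotient $U/\sigma$, where ordinary (non-equivariant) transversality applies. This construction is what your ``slightly careful'' would have to become.

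Separately, your worry that $\sigma$ might preserve a component $\Sigma_i$ orientation-preservingly is unfounded and your attempted resolution is garbled. The point is simply that $ds$ identifies $N\Sigma$ with $L|_\Sigma$ compatibly with the involutions, and $\sigma_L$ is antilinear hence orientation-reversing on fibres; since $\sigma$ preserves the orientation of $X$, it must reverse orientation on any $\sigma$-invariant component of $\Sigma$. With that observed, connecting components by $\sigma$-invariant pairs of handles (as the paper says in one line) is routine.
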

\begin{proof}
We have already seen that this is a necessary condition. It remains to prove sufficiency. Assume that $\alpha$ admits a lift $\widetilde{\alpha} \in H^2_{\mathbb{Z}_2}(X ; \mathbb{Z}_-)$ to equivariant cohomology. Then $\widetilde{\alpha}$ defines a Real line bundle $L \to X$. We will show that $L$ admits a Real section which is transverse to the zero section. Assuming this to be the case, the zero locus $\Sigma$ of $s$ is a Real surface representing $\alpha$. If $\Sigma$ is not connected we can attach pairs of handles in a $\sigma$-invariant manner so as to obtain a connected Real surface representing the same class $\alpha$.

We prove that $L$ admits a Real section $s$ which is transverse to the zero section. Suppose the fixed point set of $\sigma$ consists of $k$ isolated points $x_1, \dots , x_k$ and $m$ embedded surfaces $S_1, \dots , S_m$. Let $\nu_1, \dots , \nu_k , \mu_1 , \dots , \mu_m$ be disjoint, closed $\sigma$-invariant tubular neighbourhoods of $x_1, \dots , x_k, S_1 , \dots , S_m$. Let $\nu = \nu_1 \cup \cdots \cup \nu_k \cup \mu_1 \cup \cdots \cup \mu_m$ be the union of these neighbourhoods and let $U$ be the complement of the interior of $\nu$. Note that $\sigma$ acts freely on $U$. We will construct $s$ in two steps. For the first step, we will show that there exists a Real section $s_0$ of $L|_{\nu}$ which is transverse to the zero section and moreover the restriction $s_0|_{\partial \nu}$ is also transverse to the zero section of $L|_{\partial \nu}$. Second, since $\sigma$ acts freely on $U$, $L|_U$ descends to a (rank $2$ real) vector bundle $V \to U/\sigma$ on the quotient. Since $s_0|_{\partial \nu}$ is Real and transverse to the zero section, it descends to a section $v$ of $V|_{\partial (U/\sigma)}$ which is transverse to the zero section. By standard results we can extend $v$ to a section of $V$ which is transverse to the zero section. Pulled back to $U$, $v$ defines a Real section of $L|_U$ which extends $s_0$ and which is transverse to the zero section, giving our sought-after section $s$.

To construct $s_0$, we must construct it over each component $\nu_1, \dots , \nu_k , \mu_1 , \dots , \mu_m$. Consider first the case of an isolated point $x_i$. Then $\nu_i$ can be identified with the closed unit disc $D$ in $\mathbb{R}^4$, $\sigma$ can be identified with the map $\sigma : D \to D$ given by $\sigma(x) = -x$ and $L|_D$ can be identified with the trivial Real line bundle $L|_D \cong D \times \mathbb{C}$, $\sigma_L(x,y) = (-x , \overline{y})$. We define $s_0|_{\nu_i}$ to be given by $s_0|_{\nu_i}(x) = (x,1)$. Clearly this is a Real section and it is non-vanishing so it transverse to the zero section for trivial reasons.

Lastly, consider the case of a embedded surface $S_j$. Then $\mu_j$ can be identified with the unit disc bundle in the normal bundle $N_j \to S_j$. The restriction of $\sigma$ by $\mu_j$ is the map which acts as $-1$ in the fibres of $N_j$. Restricted to $S_j$, $\sigma_L$ defines a Real structure on $L|_{S_j}$ in the usual sense, hence defines a real line bundle $A \to S_j$. It follows that $L|_{\mu_j}$ can be identified with the complexification of $A$. More precisely, the pullback of $A$ to $N_j$ (which we continue to denote by $A$) is an equivariant real line bundle in an obvious way and then $L|_{\mu_j}$ is isomorphic to $A \oplus A$, where $\sigma_L( a_1 , a_2 ) = ( \sigma(a_1) , -\sigma(a_2) )$. Let $b : S_j \to A$ be a section of $A$ which is transverse to the zero section. Hence the zero locus of $b$ consists of a disjoint union of embedded circles, $C_1, \dots , C_r$. Choose disjoint, closed tubular neighbourhoods $\lambda_1, \dots , \lambda_r \subset S_j$ of $C_1,\dots , C_r$. We can identify each $\lambda_l$ with a cylinder $\lambda_l \cong [-1,1] \times S^1$ such that $C_l = \{0 \} \times S^1$. Let $\psi_l : \Sigma \to \mathbb{R}$ be a smooth function which is zero outside of $\lambda_l$ and which is given in $\lambda_l \cong [-1,1] \times S^1$ by $\psi_l( u,v ) = \varphi(u)$, where $\varphi : [-1,1] \to \mathbb{R}$ is smooth, $\varphi(u) = 0$ for $|u| > 2/3$, $\varphi(u) = 1$ for $|u| < 1/3$. Since $b$ is non-vanishing on $\{ \epsilon \} \times S^1 \subset \lambda_l$ for $\epsilon \neq 0$, it follows that $A|_{\lambda_l}$ is trivial. Let $c_l$ be a non-vanishing section of $A|_{\lambda_l}$. Consider the restriction $N_j |_{\lambda_l}$ of the normal bundle $N_j$ to $\lambda_l$. Since $\lambda_l$ is homotopy equivalent to a circle, we have that $N_j|_{\lambda_l}$ is either trivial (so isomorphic to $\mathbb{R} \oplus \mathbb{R}$), or isomorphic to $\mathbb{R} \oplus M$, where $M \to S^1$ is the unique non-trivial line bundle. Either way, projection to the first summand defines a map $f_l : N_j|_{\lambda_l} \to \mathbb{R}$ such that $f_l \circ \sigma = -f_l$ and $f_l$ is transverse to $0$. Now we define $s_0|_{\mu_j}$ as follows. First recall that $L|_{\mu_j} \cong A \oplus A$ with $\sigma_L(a_1,a_2) = (\sigma(a_1) , -\sigma(a_2))$. Define $s_0|_{\mu_j} = ( b , \sum_l \psi_l f_l c_l)$. Then $s_0|_{\mu_j}$ is Real since $b, \psi_l, c_l$ are $\sigma$-invariant and $f_l \circ \sigma = -f_l$ and is easily seen to be transverse to the zero section and furthermore $s_0|_{\partial \mu_j}$ is also transverse to the zero section of $L|_{\partial \mu_j}$.
\end{proof}

\begin{remark}
The proof of Theorem \ref{thm:realise} actually shows that $\alpha$ can be realised by a Real surface $\Sigma$ that meets each surface component of $X^\sigma$ in at most one circle. This is because every real line bundle on a compact surface admits a section which is transverse to the zero section and whose zero set is either empty or a circle. Furthermore, we can assume $\Sigma$ is disjoint from any surface component of $X^\sigma$ which is a sphere, because every real line bundle on a sphere is trivial.
\end{remark}

\begin{proposition}
Let $\Sigma \to X$ be a Real embedded surface and let $\alpha \in H^2_{\mathbb{Z}_2}(X ; \mathbb{Z}_-)$ denote the equivariant fundamental class of $\Sigma$. Let $S \subset X$ be a surface component of the fixed point set of $\sigma$ on $X$. The mod $2$ homology class of $\Sigma \cap S$ in $H_1(S ; \mathbb{Z}_2)$ is Poincar\'e dual to $\alpha|_S$ under the isomorphism $H^2_{\mathbb{Z}_2}(S ; \mathbb{Z}_-) \cong H^1(S ; \mathbb{Z}_2)$.
\end{proposition}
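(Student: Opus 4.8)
The plan is to reduce the statement to the behaviour of a transverse section of a \emph{real} line bundle on $S$, using Proposition~\ref{prop:assL}. By that proposition there is a Real line bundle $L\to X$ with $c_1^{\mathbb{Z}_2}(L)=\alpha$ and a Real section $s$ of $L$, transverse to the zero section, with zero locus exactly $\Sigma$; I will work entirely with the restriction of this data to $S$.

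First I would analyse $L|_S$. Since $\sigma$ fixes $S$ pointwise, $\sigma_L$ restricts to a fibrewise antilinear involution of $L|_S$ covering the identity, so its $(+1)$-eigenspaces assemble into a real line subbundle $A\subset L|_S$ with $L|_S\cong A\otimes_{\mathbb{R}}\mathbb{C}$. Over a space carrying the trivial $\mathbb{Z}_2$-action, passing to the $(+1)$-eigenbundle identifies Real line bundles with real line bundles, and the isomorphism $H^2_{\mathbb{Z}_2}(S;\mathbb{Z}_-)\cong H^1(S;\mathbb{Z}_2)$ is (or may be taken to be) the one intertwining $c_1^{\mathbb{Z}_2}$ with $w_1$ of the eigenbundle; this can be checked on the universal example $B\mathbb{Z}_2=\mathbb{RP}^\infty$, or directly with \v{C}ech cocycles. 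Hence $\alpha|_S=c_1^{\mathbb{Z}_2}(L|_S)$ corresponds to $w_1(A)$, and it remains to show that the mod $2$ class of $\Sigma\cap S$ is Poincar\'e dual to $w_1(A)$ in $S$.

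For this, the key observation is that $s|_S$ is in fact a section of $A$, not merely of $L|_S$ (immediate from $s\circ\sigma=\sigma_L\circ s$ and $\sigma|_S=\mathrm{id}$), and that it is transverse to the zero section of $A$ with zero set $\Sigma\cap S$. The transversality is a short linear-algebra computation: differentiating $s\circ\sigma=\sigma_L\circ s$ at a point $p\in\Sigma\cap S$ and using the eigenspace splittings $T_pX=T_pS\oplus N_p$, where $d\sigma_p=-1$ on the normal space $N_p$, and $L_p=A_p\oplus J_p$, where $J_p$ is the $(-1)$-eigenspace of $\sigma_L$, shows that $ds_p$ is block-diagonal, carrying $T_pS$ into $A_p$ and $N_p$ into $J_p$; surjectivity of $ds_p$ onto $L_p$ (transversality of $s$) then forces $d(s|_S)_p\colon T_pS\to A_p$ to be onto. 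Therefore $\Sigma\cap S$ is the transverse zero set of $s|_S$, a closed $1$-submanifold of $S$ (a union of circles), and its class in $H_1(S;\mathbb{Z}_2)$ is mod $2$ Poincar\'e dual to the mod $2$ Euler class of $A$, which is $w_1(A)$. Combined with the previous paragraph, this is exactly the claim.

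I expect the only genuine subtlety to be pinning down the isomorphism $H^2_{\mathbb{Z}_2}(S;\mathbb{Z}_-)\cong H^1(S;\mathbb{Z}_2)$ precisely and verifying that it sends $c_1^{\mathbb{Z}_2}$ of a Real line bundle over a trivially-acted-upon base to $w_1$ of its real eigenbundle; once that compatibility is fixed, the remainder is the routine transversality computation above together with the standard identification of the transverse zero set of a section of a real line bundle with its mod $2$ Euler class.
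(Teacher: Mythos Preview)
Your proof is correct and follows essentially the same approach as the paper's: both invoke Proposition~\ref{prop:assL} to obtain a Real line bundle $L$ with transverse Real section $s$ cutting out $\Sigma$, restrict to $S$ to obtain the real eigenbundle $A$ (the paper's $M$) with section $s|_S$ cutting out $\Sigma\cap S$, and identify the result with $w_1(A)$. Your version is in fact more thorough, supplying the eigenspace-splitting argument for transversality of $s|_S$ as a section of $A$ which the paper leaves implicit in the word ``generic''.
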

\begin{proof}
By Proposition \ref{prop:assL}, there is a Real line bundle $L$ and a generic Real section $s$ of $L$ whose zero set is $\Sigma$ and $\alpha = c_1(L)$. The restriction $L|_S$ is the complexification of a real line bundle $M$, classified by $\alpha|_S \in H^2_{\mathbb{Z}_2}(S ; \mathbb{Z}_-) \cong H^1(S ; \mathbb{Z}_2)$. $s|_S$ is a generic section of $M$ and the zero locus of $s|_S$ is $\Sigma \cap S$. Hence $\Sigma \cap S$ is Poincar\'e dual to $w_1(M) = \alpha|_S$.
\end{proof}

In order to make use of Theorem \ref{thm:realise}, it is useful to have a description of the image of the forgetful map $H^2_{\mathbb{Z}_2}(X ; \mathbb{Z}_-) \to H^2(X ; \mathbb{Z})$. The following result does this.

\begin{theorem}\label{thm:eqh2}
Let $H^2(X ; \mathbb{Z})^{-\sigma} = \{ \alpha \in H^2(X ; \mathbb{Z}) \; | \; \sigma^*(\alpha) = -\alpha\}$. The image of the forgetful map $H^2_{\mathbb{Z}_2}(X ; \mathbb{Z}_-) \to H^2(X ; \mathbb{Z})$ is contained in $H^2(X ; \mathbb{Z})^{-\sigma}$ with finite cokernel. Every class of the form $\alpha = x - \sigma^*(x)$ for some $x \in H^2(X ; \mathbb{Z})$ is in the image. Furthermore if $b_1(X) = 0$ and $\sigma$ is not free then the image is equal to $H^2(X ; \mathbb{Z})^{-\sigma}$.
\end{theorem}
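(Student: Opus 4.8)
The plan is to analyze the forgetful map via the Serre spectral sequence of the Borel fibration $X \to X_{h\mathbb{Z}_2} \to B\mathbb{Z}_2$, where $X_{h\mathbb{Z}_2} = E\mathbb{Z}_2 \times_{\mathbb{Z}_2} X$ and $B\mathbb{Z}_2 = \mathbb{RP}^\infty$. One has $H^*_{\mathbb{Z}_2}(X;\mathbb{Z}_-) = H^*(X_{h\mathbb{Z}_2};\underline{\mathbb{Z}}_-)$, where $\underline{\mathbb{Z}}_-$ is the local system on $X_{h\mathbb{Z}_2}$ induced by $\mathbb{Z}_-$; since $\sigma$ acts on $\mathbb{Z}$ by $-1$, this system is pulled back from $B\mathbb{Z}_2$. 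Because $X$ is connected and appears as the fibre, $\underline{\mathbb{Z}}_-$ restricts to the constant system $\mathbb{Z}$ on $X$, the forgetful map is restriction to the fibre, and the spectral sequence has
\[
E_2^{p,q} = H^p\big(\mathbb{Z}_2\,;\, H^q(X;\mathbb{Z}) \otimes \mathbb{Z}_-\big),
\]
group cohomology with $\mathbb{Z}_2 = \langle \sigma \rangle$ acting on $H^q(X;\mathbb{Z})$ by $\sigma^*$ and on $\mathbb{Z}_-$ by $-1$. The edge homomorphism identifies the image of the forgetful map in degree $n$ with $E_\infty^{0,n} \subseteq E_2^{0,n}$, and $E_2^{0,n} = H^0(\mathbb{Z}_2; H^n(X;\mathbb{Z})\otimes\mathbb{Z}_-) = H^n(X;\mathbb{Z})^{-\sigma}$, which gives the containment in $H^2(X;\mathbb{Z})^{-\sigma}$. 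The cokernel $E_2^{0,n}/E_\infty^{0,n}$ is built from the images of the differentials $d_r : E_r^{0,n} \to E_r^{r,n-r+1}$ with $r\geq 2$, whose targets are subquotients of groups $H^r(\mathbb{Z}_2;-)$ with $r\geq 1$; since $X$ is compact these are finitely generated and annihilated by $2$, hence finite. So the forgetful map has finite cokernel.

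For the claim that every $\alpha = x - \sigma^*(x)$ lies in the image I would use the transfer of the double cover $\widetilde{\pi} : E\mathbb{Z}_2 \times X \to X_{h\mathbb{Z}_2}$. Since $E\mathbb{Z}_2$ is contractible and the deck transformation $\tau$ acts as (antipode)$\,\times\,\sigma$, there are identifications $H^*(E\mathbb{Z}_2 \times X; \widetilde{\pi}^*\underline{\mathbb{Z}}_-) \cong H^*(X;\mathbb{Z})$ under which $\widetilde{\pi}^*$ is the forgetful map and $\tau^*$ becomes $-\sigma^*$ (the sign is the action of $\tau$ on $\underline{\mathbb{Z}}_-$). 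Then the transfer $\widetilde{\pi}_! : H^*(X;\mathbb{Z}) \to H^*_{\mathbb{Z}_2}(X;\mathbb{Z}_-)$ satisfies $\widetilde{\pi}^*\widetilde{\pi}_! = 1 + \tau^* = 1 - \sigma^*$, so the forgetful map carries $\widetilde{\pi}_!(x)$ to $x - \sigma^*(x)$.

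For the last assertion, assume $b_1(X)=0$ and $\sigma$ not free; the goal is $E_\infty^{0,2} = E_2^{0,2}$, which amounts to the vanishing of $d_2 : E_2^{0,2}\to E_2^{2,1}$ and $d_3 : E_3^{0,2}\to E_3^{3,0}$ (no differential maps into $E_r^{0,2}$, for degree reasons). Since $b_1(X)=0$ we have $H^1(X;\mathbb{Z})=0$, so $E_2^{2,1} = H^2(\mathbb{Z}_2;H^1(X;\mathbb{Z})\otimes\mathbb{Z}_-)=0$ and $d_2$ vanishes. For $d_3$, pick a fixed point $x_0 \in X^\sigma$ (which exists as $\sigma$ is not free); the equivariant inclusion $\{x_0\}\hookrightarrow X$ induces a map $B\mathbb{Z}_2 \to X_{h\mathbb{Z}_2}$ that is a section of the Borel fibration. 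A section makes $\pi^* : H^*(B\mathbb{Z}_2;\mathbb{Z}_-) \to H^*_{\mathbb{Z}_2}(X;\mathbb{Z}_-)$ a split injection, which forces the edge map $E_2^{n,0} \twoheadrightarrow E_\infty^{n,0}$ to be an isomorphism for every $n$, hence all differentials into the bottom row $E_r^{\bullet,0}$ to vanish; in particular $d_3 : E_3^{0,2}\to E_3^{3,0}$ is zero. Therefore $E_\infty^{0,2} = E_2^{0,2} = H^2(X;\mathbb{Z})^{-\sigma}$ and the forgetful map is surjective onto $H^2(X;\mathbb{Z})^{-\sigma}$.

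\emph{The main obstacle} is the vanishing of $d_3$; this is exactly where non-freeness enters, through the section provided by a fixed point. The remaining work is bookkeeping, but the bookkeeping is delicate: one must verify that under the Borel fibration the forgetful map is genuinely fibre restriction, that the bottom-row coefficient system of the spectral sequence is $\mathbb{Z}_-$ rather than the constant $\mathbb{Z}$, and that the section coming from a fixed point is compatible with this twisting so that the injectivity argument applies. (Alternatively one can replace the spectral sequence by the long exact sequence in equivariant cohomology attached to the short exact sequence of $\mathbb{Z}[\mathbb{Z}_2]$-modules $0 \to \mathbb{Z}_- \to \mathbb{Z}[\mathbb{Z}_2] \to \mathbb{Z} \to 0$, which makes the $x - \sigma^*(x)$ statement transparent via Shapiro's lemma; the finiteness of the cokernel and the $b_1(X)=0$ case are still governed by the same $\mathbb{Z}_2$-cohomology computations.)
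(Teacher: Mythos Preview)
Your proof is correct and follows essentially the same route as the paper: both use the Borel spectral sequence $E_2^{p,q}=H^p(\mathbb{Z}_2;H^q(X;\mathbb{Z})\otimes\mathbb{Z}_-)$, identify the image of the forgetful map with $E_\infty^{0,2}\subseteq E_2^{0,2}=H^2(X;\mathbb{Z})^{-\sigma}$, bound the cokernel by the finiteness of $E_2^{p,q}$ for $p>0$, kill $d_2$ via $b_1(X)=0$, and kill $d_3$ using the section of the Borel fibration coming from a fixed point.

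The one genuine difference is how you handle the claim that every $x-\sigma^*(x)$ lies in the image. The paper argues geometrically: write $x=c_1(L)$ and observe that $M=L\otimes\sigma^*(\overline{L})$ carries an obvious Real structure, so $c_1(M)=x-\sigma^*(x)$ lifts to $H^2_{\mathbb{Z}_2}(X;\mathbb{Z}_-)$. You instead use the transfer of the double cover $E\mathbb{Z}_2\times X\to X_{h\mathbb{Z}_2}$ together with $\widetilde{\pi}^*\widetilde{\pi}_!=1+\tau^*=1-\sigma^*$. Your argument is purely cohomological and works verbatim in every degree and for any coefficient module, not just for $H^2$ with $\mathbb{Z}_-$; the paper's argument is specific to degree~$2$ but has the advantage of producing an explicit Real line bundle, which ties in directly with the surrounding discussion of Real surfaces as zero loci.
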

\begin{proof}
Consider the Borel spectral sequence $E_2^{p,q} = H^p( \mathbb{Z}_2 ; H^q(X ; \mathbb{Z}) \otimes \mathbb{Z}_-)$ abutting to $H^*_{\mathbb{Z}_2}(X ; \mathbb{Z}_-)$. Since $E_2^{2,0} = H^2( \mathbb{Z}_2 ; \mathbb{Z}_-) = 0$, we get a short exact sequence $0 \to E_3^{1,1} \to H^2_{\mathbb{Z}_2}(X ; \mathbb{Z}_-) \to E_4^{0,2} \to 0$. Furthermore $E_4^{0,2}$ is a subgroup of $E_2^{0,2} = H^2(X ; \mathbb{Z})^{-\sigma}$ and the composition $H^2_{\mathbb{Z}_2}(X ; \mathbb{Z}_-) \to E_4^{0,2} \to H^2(X ; \mathbb{Z})^{-\sigma}$ is the forgetful map. The image of the forgetful map is therefore $E_4^{0,2} \subseteq H^2(X ; \mathbb{Z})^{-\sigma}$. Since $E_2^{p,q}$ is finite for $p>0$, the same is true of $E_r^{p,q}$ for all $r$. Then the codomains of $d_2 : E_2^{0,2} \to E_2^{2,1}$ and $d_3 : E_3^{0,2} \to E_3^{3,0}$ are finite. Hence $E_4^{0,2} \to H^2(X ; \mathbb{Z})^{-\sigma}$ has finite cokernel.

Suppose $\sigma$ is not free. Let $x \in X$ be a fixed point of sigma. Then $x$ defines a section of the Borel fibration $X \times_{\mathbb{Z}_2} E\mathbb{Z}_2 \to B\mathbb{Z}_2$ and it follows that there can be no differentials mapping into the $q=0$ row of the spectral sequence. In particular, $d_3 : E_3^{0,2} \to E_3^{3,0}$ is zero. Suppose also that $b_1(X) = 0$. Then $E_2^{2,1} = 0$ and hence $d_2 : E_2^{0,2} \to E_2^{2,1}$ is zero. So under these assumptions we have that $E_4^{0,2} = E_2^{0,2} = H^2(X ; \mathbb{Z})^{-\sigma}$ so the image of the forgetful map $H^2_{\mathbb{Z}_2}(X ; \mathbb{Z}_-) \to H^2(X ; \mathbb{Z})$ is $H^2(X ; \mathbb{Z})^{-\sigma}$.

Lastly, suppose $\alpha = x - \sigma^*(x)$ for some $x \in H^2(X ; \mathbb{Z})$. Then $x = c_1(L)$ for some complex line bundle $L \to X$. It follows that $\alpha$ is the first Chern class of $M = L \otimes \sigma^*( \overline{L})$. But $M$ has an obvious Real structure and hence $\alpha$ lifts to a class in $H^2_{\mathbb{Z}_2}(X ; \mathbb{Z}_-)$.
\end{proof}

Let $M$ be a smooth manifold and $\sigma$ a smooth involution on $M$. A {\em Real structure} on a principal circle bundle $\pi : Y \to M$ is a lift $\sigma_Y$ of $\sigma$ to a smooth involution on $Y$ such that $\sigma_Y( yz) = \sigma_Y(y)\overline{z}$ for all $z \in S^1$ (where $S^1$ is taken to be the unit circle in $\mathbb{C}$). Clearly there is a correspondence between Real structures on $Y$ and Real structures on the associated line bundle $L = Y \times_{S^1} \mathbb{C}$. In particular, isomorphism classes of Real structures on $Y$ correspond to lifts of $c_1(Y) \in H^2(M ; \mathbb{Z})$ to $H^2_{\mathbb{Z}_2}(M ; \mathbb{Z}_-)$. Connections on $Y$ correspond to imaginary $1$-forms $i a \in i \Omega^1(Y)$ such that $a$ is $S^1$-invariant and $\iota_{\varphi} a = 1$, where $\varphi$ is the vector field on $Y$ generating the $S^1$-action, normalised so that the integral curves of $\varphi$ have period $2\pi$. Let $\sigma_Y$ be a Real structure on $Y$. A connection $ia$ on $Y$ is said to be {\em Real (with respect to $\sigma_Y$)} if $\sigma_Y^*(a) = -a$. This is equivalent to saying the horizontal distribution $Ker(a)$ is $\sigma_Y$-invariant, or that the associated covariant derivative on $L = Y \times_{S^1} \mathbb{C}$ commutes with the induced Real structure.

\begin{lemma}\label{lem:flatreal}
Let $\Sigma$ be a compact, connected, oriented surface and $\sigma$ an orientation reversing involution on $\Sigma$. Let $\pi : Y \to \Sigma$ be a principal circle bundle over $\Sigma$ of degree zero and let $\sigma_Y$ be a Real structure on $Y$. Then $Y$ admits a flat Real connection.
\end{lemma}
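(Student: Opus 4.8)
The plan is to reduce the problem to choosing an appropriate value for the holonomy of a flat connection, working with the fact that flat connections on $Y$ are detected by their holonomy representation $\pi_1(\Sigma) \to S^1$. Since $\deg(Y) = 0$, the bundle $Y$ admits flat connections, and the set of flat connections modulo gauge equivalence is identified with $\mathrm{Hom}(\pi_1(\Sigma), S^1) = H^1(\Sigma;\mathbb{R}/\mathbb{Z})$ (or more precisely a torsor over it if one does not fix a basepoint lift). First I would observe that $\sigma_Y$ acts on the space $\mathcal{A}_{\mathrm{flat}}$ of flat connections on $Y$: given a flat connection $ia$, the pullback $-\sigma_Y^*(a)$ is again a connection (the sign because $\sigma_Y$ is antilinear / reverses the circle direction) and it is flat since $\sigma$ is a diffeomorphism. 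A Real flat connection is precisely a fixed point of this involution on $\mathcal{A}_{\mathrm{flat}}$. So the task becomes: show the $\mathbb{Z}_2$-action induced by $\sigma_Y$ on the affine space $\mathcal{A}_{\mathrm{flat}}$ (or on its quotient $H^1(\Sigma;\mathbb{R}/\mathbb{Z})$) has a fixed point, and then promote a gauge-equivalence-class fixed point to an honest fixed connection.

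The key steps, in order: (1) Pick any flat connection $ia_0$ on $Y$; this exists because $\deg Y = 0$. (2) Form $ia_1 = -\sigma_Y^*(a_0)$, another flat connection on $Y$ (using that $\sigma_Y$ covers $\sigma$ and is a bundle involution over $\mathbb{Z}_2$, so it genuinely sends connections to connections after the sign twist, and preserves flatness). (3) Average: the midpoint $ia = \tfrac12(ia_0 + ia_1) = \tfrac12 i(a_0 - \sigma_Y^*(a_0))$ is a connection on $Y$ because the space of connections is affine over $i\Omega^1(\Sigma)$ and $a_0, a_1$ differ by a pullback of a $1$-form on $\Sigma$. It is flat because flatness of a connection on a circle bundle over a surface is a codimension-one affine condition (the curvature map $a \mapsto da$ down on $\Sigma$ is affine, and vanishes on both $a_0$ and $a_1$, hence on their average). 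And by construction $\sigma_Y^*(a) = \tfrac12(\sigma_Y^*(a_0) + \sigma_Y^* \sigma_Y^* (a_0)\cdot(-1)) $—more carefully, $\sigma_Y^*(a_1) = -\sigma_Y^* \sigma_Y^*(a_0) = -a_0$ since $\sigma_Y$ is an involution, so $\sigma_Y^*(a) = \tfrac12(\sigma_Y^*(a_0) - a_0) = -a$. Hence $ia$ is a flat Real connection.

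I expect step (3), specifically verifying that the naive average is again a \emph{connection} on $Y$ (rather than on some other bundle) and that it is \emph{flat}, to be the only real content, and it is essentially immediate once one records that the difference $a_0 - \sigma_Y^*(a_0) = a_0 - a_1$ descends to a closed $1$-form $\beta \in \Omega^1(\Sigma)$: indeed both $a_0$ and $a_1$ satisfy $\iota_\varphi a_j = 1$ and are $S^1$-invariant, so their difference is basic, i.e.\ the pullback of a $1$-form on $\Sigma$, and it is closed because $da_0 = da_1 = 0$. Then $a = a_0 - \tfrac12 \pi^*\beta$ is manifestly a connection, and $da = da_0 - \tfrac12 \pi^* d\beta = 0$. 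The antilinearity/orientation-reversal of $\sigma_Y$ is what makes the sign work out so that the averaged connection is fixed rather than merely mapped to another gauge-equivalent one; no genus or eigenvalue considerations are needed, so the hypotheses that $\Sigma$ is a surface and $\sigma$ orientation-reversing are used only to guarantee that $\deg Y = 0$ bundles carry flat connections and that $\sigma_Y$ makes sense. This averaging trick — which is the standard device for producing invariant connections under a compact (here finite) group action — is the heart of the argument.
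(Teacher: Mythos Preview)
Your averaging argument is correct and complete: since $\sigma_Y$ sends the generating vector field $\varphi$ to $-\varphi$, the form $a_1 = -\sigma_Y^*(a_0)$ is again a flat connection, and the average $a = \tfrac{1}{2}(a_0 + a_1)$ is a flat connection satisfying $\sigma_Y^*(a) = -a$. (There is a small sign slip in your exposition: the basic $1$-form is $a_0 - a_1 = a_0 + \sigma_Y^*(a_0)$, not $a_0 - \sigma_Y^*(a_0)$; the latter has $\iota_\varphi = 2$. But your formula $a = a_0 - \tfrac{1}{2}\pi^*\beta$ is correct once $\beta$ is taken to be this sum.)

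The paper takes a different, cohomological route: it identifies isomorphism classes of flat Real line bundles with $H^1_{\mathbb{Z}_2}(\Sigma; S^1_-)$, and uses the long exact sequence of the exponential short exact sequence $0 \to \mathbb{Z}_- \to \mathbb{R}_- \to S^1_- \to 0$ together with the identification $H^2_{\mathbb{Z}_2}(\Sigma;\mathbb{R}_-) \cong H^2(\Sigma;\mathbb{R})$ to show that every Real line bundle of degree zero lies in the image of $H^1_{\mathbb{Z}_2}(\Sigma;S^1_-) \to H^2_{\mathbb{Z}_2}(\Sigma;\mathbb{Z}_-)$. Your argument is more elementary and self-contained, avoiding equivariant cohomology entirely; it also yields the slightly stronger conclusion that \emph{any} flat connection can be averaged to a flat Real connection (not merely that one exists in the isomorphism class). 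The paper's argument, on the other hand, fits naturally into the equivariant-cohomological framework used throughout and makes explicit the classifying group for flat Real bundles, which may be useful structurally even though it is heavier machinery for this particular lemma.
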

\begin{proof}
Flat Real line bundles on $\Sigma$ are classified by $H^1_{\mathbb{Z}_2}(\Sigma ; S^1_-)$, where $S^1_-$ is the equivariant constant sheaf with coefficient group $S^1$ and where $\sigma$ acts by combination of pullback and complex conjugation. Now consider the exponential sequence
\[
0 \to \mathbb{Z}_- \to \mathbb{R}_- \to S^1_- \to 0.
\]
This gives rise to an associated long exact sequence
\[
\cdots \to H^1_{\mathbb{Z}_2}(\Sigma ; S^1_-) \to H^2_{\mathbb{Z}_2}(\Sigma ; \mathbb{Z}_-) \to H^2_{\mathbb{Z}_2}(\Sigma ; \mathbb{R}_-) \to \cdots
\]
The forgetful map $H^2_{\mathbb{Z}_2}(\Sigma ; \mathbb{R}_-) \to H^2(\Sigma ; \mathbb{R})$ is easily seen to be an isomorphism (use the Borel spectral sequence). Therefore, the image of $H^1_{\mathbb{Z}_2}(\Sigma ; S^1_-) \to H^2_{\mathbb{Z}_2}(\Sigma ; \mathbb{Z}_-)$ is the kernel of the composition 
\[
H^2_{\mathbb{Z}_2}(\Sigma ; \mathbb{Z}_-) \to H^2(\Sigma ; \mathbb{Z}) \to H^2(\Sigma ; \mathbb{R}).
\]
Then since $H^2(\Sigma ; \mathbb{Z}) \to H^2(\Sigma ; \mathbb{R})$ is injective, this is also the kernel of $H^2_{\mathbb{Z}_2}(\Sigma ; \mathbb{Z}_-) \to H^2(\Sigma ; \mathbb{Z})$. Hence every class in the kernel of $H^2_{\mathbb{Z}_2}(\Sigma ; \mathbb{Z}_-) \to H^2(\Sigma ; \mathbb{Z})$ lies in the image of $H^1_{\mathbb{Z}_2}(\Sigma ; S^1_-) \to H^2_{\mathbb{Z}_2}(\Sigma ; \mathbb{Z}_-)$, which is to say, admits a flat Real connection. Since $Y$ is assumed to have degree zero, the class of $(Y , \sigma_Y)$ in $H^2_{\mathbb{Z}_2}(\Sigma ; \mathbb{Z}_-)$ lies in the kernel of $H^2_{\mathbb{Z}_2}(\Sigma ; \mathbb{Z}_-) \to H^2(\Sigma ; \mathbb{Z})$.
\end{proof}

Let $\Sigma$ be a compact, connected oriented surface of genus $g$ and let $\sigma$ be an orientation reversing involution. Choose a $\sigma$-invariant metric $g_\Sigma$ on $\Sigma$. The underlying conformal structure of $g_\Sigma$ makes $\Sigma$ into a Riemann surface and $\sigma$ is then an anti-holomorphic involution.

A divisor $D = \sum_i n_i x_i$ on $\Sigma$ is said to be {\em Real} if $\sigma D = D$, where $\sigma D = \sum_i n_i \sigma(x_i)$. Similar to the construction of Real line bundles given in the proof of Proposition \ref{prop:assL}, we can associate a Real structure on the line bundle $L = \mathcal{O}(D)$ and a Real meromorphic section $s$ whose divisor is $D$.

\begin{lemma}\label{lem:rdivgen}
Every Real line bundle on $\Sigma$ is isomorphic to a Real line bundle of the form $\mathcal{O}(D)$ for a Real divisor $D$.
\end{lemma}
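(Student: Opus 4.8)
The plan is to run the classical argument that every holomorphic line bundle on a compact Riemann surface is $\mathcal{O}$ of a divisor, but equivariantly, reducing the whole statement to the existence of a nonzero \emph{Real} holomorphic section of a sufficiently high-degree Real line bundle.

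\textbf{Step 1: install a compatible holomorphic structure.} Write $(L,\sigma_L)$ for the given Real line bundle. As a smooth complex line bundle on the compact Riemann surface $(\Sigma, g_\Sigma)$ it admits some Dolbeault operator, and the set of all Dolbeault operators on $L$ is an affine space over $\Omega^{0,1}(\Sigma)$. Since $\sigma$ is anti-holomorphic, the pair $(\sigma,\sigma_L)$ acts on this affine space by $\bar\partial_L \mapsto \sigma_L\circ\bar\partial_L\circ\sigma_L$ (with $\sigma_L$ acting on $L$-valued forms in the evident way); this is an affine involution whose linear part is conjugate-linear on the real vector space $\Omega^{0,1}(\Sigma)$. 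Averaging any $\bar\partial_L$ with its image gives a fixed point, i.e.\ a holomorphic structure on $L$ for which $\sigma_L$ is anti-holomorphic. So I may assume $L$ is a holomorphic line bundle on $(\Sigma,\sigma)$ with $\sigma_L$ an anti-holomorphic involution covering $\sigma$; the isomorphism $L\cong\mathcal{O}(D)$ that will come out is one of Real holomorphic line bundles, hence in particular of smooth Real line bundles.

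\textbf{Step 2: reduce to high degree.} Fix $p\in\Sigma$. Then $D_0 := p+\sigma(p)$ is a Real divisor of degree $2$, and by the construction recalled just before the lemma (the analogue of Proposition \ref{prop:assL}) the line bundle $\mathcal{O}(D_0)$ carries a natural Real structure, with Real section $1_{D_0}$ of divisor $D_0$. Choosing $N$ large enough that $\deg(L\otimes\mathcal{O}(ND_0)) = \deg L + 2N \ge 2g+1$, set $L' := L\otimes\mathcal{O}(D_0)^{\otimes N}$. This is a Real holomorphic line bundle (tensor product of Real line bundles), and by Riemann--Roch $h^0(\Sigma,L') = \deg L' - g + 1 > 0$. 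Note there is no obstruction here: the Real divisor $p+\sigma(p)$ lets me raise the degree of a Real line bundle by any even amount while keeping it Real.

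\textbf{Step 3: a Real section and its divisor.} The involution $\sigma_{L'}$ induces a conjugate-linear involution $\tau$ on the finite-dimensional complex vector space $H^0(\Sigma,L')$ by $\tau(s)=\sigma_{L'}\circ s\circ\sigma$; its fixed subspace $V = H^0(\Sigma,L')^{\tau}$ is a real form, so $\dim_{\mathbb{R}}V = \dim_{\mathbb{C}}H^0(\Sigma,L') > 0$. Pick $0\ne s\in V$. In a $\sigma$-equivariant local holomorphic trivialization of $L'$ near an orbit $\{x,\sigma(x)\}$, the relation $\sigma_{L'}\circ s\circ\sigma = s$ forces $s$ to vanish to the same order at $x$ and at $\sigma(x)$, so $D' := \operatorname{div}(s)$ is an effective Real divisor. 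The canonical isomorphism $\mathcal{O}(D')\xrightarrow{\ \sim\ }L'$ sending the Real section $1_{D'}$ to the Real section $s$ intertwines the two Real structures (check this on the dense open set $\Sigma\setminus|D'|$, where in the frames $1_{D'}$ and $s$ both Real structures are standard conjugation and the map is the identity). Hence, as Real holomorphic line bundles,
\[
L \;\cong\; L'\otimes\mathcal{O}(D_0)^{\otimes(-N)} \;\cong\; \mathcal{O}(D' - ND_0) \;=\; \mathcal{O}(D),
\]
where $D := D' - N\bigl(p+\sigma(p)\bigr)$ is a Real divisor. This is the required conclusion.

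The genuinely routine points are the averaging in Step 1 and the two local verifications in Step 3 (that $\operatorname{div}(s)$ is Real and that the induced isomorphism is Real). I expect the main point to be Step 1: one cannot even speak of $\mathcal{O}(D)$ until a $\sigma_L$-compatible holomorphic structure has been put on $L$, and it is exactly the fact that we are averaging over $\mathbb{Z}_2$ with the model space a real vector space that makes this possible with no obstruction.
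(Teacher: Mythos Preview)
Your proof is correct and follows essentially the same route as the paper: install a $\sigma_L$-compatible holomorphic structure by averaging (the paper averages a Hermitian connection and takes its $(0,1)$-part, you average Dolbeault operators directly), tensor with $\mathcal{O}$ of a Real effective divisor to force $h^0>0$, and use the real form of $H^0$ to extract a nonzero Real holomorphic section whose divisor does the job. The only cosmetic differences are your specific choice $D_0=p+\sigma(p)$ versus the paper's arbitrary Real effective divisor, and your phrasing via $L\cong\mathcal{O}(D'-ND_0)$ versus the paper's equivalent ``$f/s$ is a Real meromorphic section of $E$''.
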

\begin{proof}
Let $(E, \sigma_E)$ be a Real line bundle on $\Sigma$. By averaging we can construct a $\sigma_E$-invariant connection $\nabla$ on $E$. The $(0,1)$-part of $\nabla$ defines a holomorphic structure $E$ which is respected by $\sigma_E$ in the sense that $\sigma_E$ is anti-holomorphic. If $s$ is a holomorphic (or meromorphic) section of $E$, then so is $\sigma_E \circ s \circ \sigma$. Now choose a Real effective divisor $D$ such that $deg(D) \ge g - deg(E)$ and let $L = \mathcal{O}(D)$ be the associated holomorphic line bundle. Then by Riemann--Roch $h^0(E \otimes L) > 0$. The Real structures on $E$ and $L$ define a Real structure on $E \otimes L$ and this defines a real structure (in the usual sense) on the complex vector space $H^0(\Sigma , E \otimes L)$. It follows that $E \otimes L$ admits a Real holomorphic section $f$ which is not identically zero. Let $s$ be a Real holomorphic section of $L$ whose divisor is $D$. Then $f/s$ is a Real meromorphic section of $E$. It follows that $E$ is the Real line bundle associated to the divisor of $f/s$.
\end{proof}

Let $C_1, \dots , C_n$ denote the connected components of the fixed point set of $\sigma$. Thus each $C_i$ is an embedded circle in $\Sigma$. If $E$ is a Real line bundle on $\Sigma$, then $E|_{C_i}$ admits a real structure, so is the complexification of a real line bundle $E_i$ on $C_i$. Let $u_i(E) \in \mathbb{Z}_2$ denote the first Stiefel--Whitney class of $E_i$ (under the identification $H^1(C_i ; \mathbb{Z}_2) \cong \mathbb{Z}_2)$. 

\begin{proposition}\label{prop:deg2}
The homomorphism $\phi : H^2_{\mathbb{Z}_2}(\Sigma ; \mathbb{Z}_-) \to \mathbb{Z} \oplus \mathbb{Z}_2^n$ given by
\[
\phi(E) = ( deg(E) , u_1(E) , \dots , u_n(E))
\]
is injective with image equal to 
\[
A = \{ (d , u_1 , \dots , u_n) \in \mathbb{Z} \oplus \mathbb{Z}_2^n \; | \; d = u_1 + \cdots + u_n \; ({\rm mod} \; 2) \}.
\]
\end{proposition}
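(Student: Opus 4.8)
The plan is to establish in turn that $\phi$ is a well-defined homomorphism, that $\operatorname{im}(\phi) \subseteq A$, that $A \subseteq \operatorname{im}(\phi)$, and that $\phi$ is injective. That $\phi$ is a homomorphism is routine: $\deg$ and each $u_i$ are additive under tensor product of Real line bundles, since $(E \otimes E')|_{C_i}$ is the complexification of $E_i \otimes E'_i$ and $w_1$ is additive. For $\operatorname{im}(\phi) \subseteq A$, I would represent $E$ by a Real section $s$ transverse to the zero section (Proposition \ref{prop:assL}); its zeros form a finite set with signed count $\deg(E)$. Away from $\Sigma^\sigma$ the involution permutes the zeros freely, so they occur in pairs and contribute nothing mod $2$; along a fixed circle $C_i$, Reality of $s$ forces $s|_{C_i}$ into the real subbundle $E_i$, and equivariance of $ds$ at a zero (it intertwines $d\sigma$ and $\sigma_E$, both genuine involutions over $\Sigma^\sigma$) shows $s|_{C_i}$ is transverse to zero in $E_i$, so $s$ has $u_i(E) = w_1(E_i)$ zeros mod $2$ on $C_i$. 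Reducing the zero count mod $2$ gives $\deg(E) \equiv u_1(E) + \dots + u_n(E) \pmod 2$, i.e. $\phi(E) \in A$.

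For $A \subseteq \operatorname{im}(\phi)$ I would use the correspondence between Real divisors and Real line bundles from the discussion preceding Lemma \ref{lem:rdivgen}. If $n = 0$ then $\mathcal{O}(p + \sigma(p))$ has degree $2$, so $\operatorname{im}(\phi) \supseteq 2\mathbb{Z} = A$. If $n \ge 1$, fix $q_i \in C_i$ on each fixed circle; then $\mathcal{O}(q_n)$ maps under $\phi$ to $(1, 0, \dots, 0, 1)$ and $\mathcal{O}(q_i - q_n)$ maps to $(0, e_i + e_n)$ for $1 \le i \le n-1$, because the only fixed circle meeting the relevant divisor contributes a single simple real zero or pole. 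A short computation shows these classes generate $A$, so $\operatorname{im}(\phi) \supseteq A$.

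The crux is injectivity: if $\deg(E) = 0$ and all $u_i(E) = 0$, I claim $E$ is the trivial Real line bundle, equivalently that $E$ admits a nowhere-zero Real section. Since $\Sigma$ is orientable the normal bundle of each $C_i$ is trivial, so a $\sigma$-invariant tubular neighborhood $\mu_i \cong C_i \times (-1,1)$ equivariantly retracts onto $C_i$ and $E|_{\mu_i}$ is the pullback of $E|_{C_i}$; the latter is the complexification of $E_i$, which is trivial since $w_1(E_i) = u_i(E) = 0$, so $E|_{\mu_i}$ has a nowhere-zero Real section $s_0$. Put $\nu = \bigcup_i \mu_i$ and $U = \Sigma \setminus \operatorname{int}(\nu)$; then $\sigma$ acts freely on $U$, $E|_U$ descends to a rank $2$ real bundle $V$ over $W = U/\sigma$, and $s_0|_{\partial \nu}$ descends to a nowhere-zero section $v_0$ of $V$ over $\partial W$. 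One checks that $W$ is connected and that $w_1(V)$ is the orientation class of $W$, so the obstruction to extending $v_0$ to a nowhere-zero section of $V$ over $W$ is the relative Euler number $e(V, v_0) \in H^2(W, \partial W ; \mathbb{Z}^{w_1(V)}) \cong \mathbb{Z}$, and it is the only obstruction because $\dim W = 2$. Now extend $s_0$ to a transverse Real section $s$ of $E$ over all of $\Sigma$: its zeros all lie in $U$ and occur in $\sigma$-pairs, and since $\sigma$ reverses the orientation of $\Sigma$ while $\sigma_E$ reverses the fibre orientation of $E$, the two zeros in a pair carry the same sign, so $\deg(E) = 2\, e(V, v_0)$. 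Hence $e(V, v_0) = 0$, $v_0$ extends to a nowhere-zero section of $V$, and pulling back yields a nowhere-zero Real section of $E$.

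I expect all the delicate points to be in the injectivity argument, namely verifying that $w_1(V)$ is the orientation class of $W$ and the factor-of-two identity $\deg(E) = 2\, e(V, v_0)$, and confirming that a generic Real section of $E$ is transverse to the zero section and restricts transversally to each $C_i$. The free case $n = 0$ fits the same scheme with $W$ a closed non-orientable surface. None of the four steps needs the Borel spectral sequence; as a consistency check, for $n \ge 1$ the conclusion $H^2_{\mathbb{Z}_2}(\Sigma ; \mathbb{Z}_-) \cong A \cong \mathbb{Z} \oplus \mathbb{Z}_2^{n-1}$ matches what that spectral sequence yields via the splitting induced by a fixed point.
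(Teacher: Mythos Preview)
Your argument is correct and genuinely different from the paper's.  For $\operatorname{im}(\phi)\subseteq A$ the paper invokes Lemma~\ref{lem:rdivgen} (every Real line bundle arises from a Real divisor) and computes $\phi$ on the basic divisors $x_i$ and $x+\sigma(x)$; you instead read the congruence off the zero set of a generic Real section, which avoids Riemann--Roch entirely.  For injectivity the contrast is sharper: the paper computes $H^2_{\mathbb{Z}_2}(\Sigma;\mathbb{Z}_-)$ abstractly---via $H^2(\Sigma/\sigma;\mathbb{Z}_{orn})$ when $n=0$ and via the Borel spectral sequence plus localisation when $n>0$---and then uses that a surjection between isomorphic finitely generated abelian groups is injective, whereas you build an explicit trivialisation by an obstruction-theoretic argument on the quotient $W=U/\sigma$.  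Your verification that $w_1(V)=w_1(TW)$ and the factor-of-two identity $\deg(E)=2\,e(V,v_0)$ both go through: the key point, which you state correctly, is that $\sigma$ and $\sigma_E$ each reverse orientation, so paired zeros of $s$ carry equal signs and the local index of the descended section $v$ at $\pi(p)$ agrees with that of $s$ at $p$.  Your route is more hands-on but more elementary (no spectral sequences, no localisation), while the paper's route simultaneously yields the isomorphism $H^2_{\mathbb{Z}_2}(\Sigma;\mathbb{Z}_-)\cong\mathbb{Z}\oplus\mathbb{Z}_2^{n-1}$ as a by-product.  One small citation slip: Proposition~\ref{prop:assL} concerns Real surfaces in a $4$-manifold, not Real line bundles on $\Sigma$; the existence of a transverse Real section that you need follows instead from the construction in the proof of Theorem~\ref{thm:realise} (specialised to a surface), or directly by genericity in the space of Real sections.
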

\begin{proof}
We first show that the image of $\phi$ is $A$. Let $x_i \in C_i$. Then $\phi( \mathcal{O}(x_i) ) = (1 , e_i )$, where $e_i$ denotes the $i$-th standard basis vector of $\mathbb{Z}_2^n$. Let $x \in \Sigma$ be a non-fixed point. Then $\phi( \mathcal{O}(x+\sigma(x)) ) = (2 , 0 , \dots , 0)$. Clearly the elements $(1 , e_i)$, $i = 1, \dots , n$ and $(2 , 0 , \dots , 0)$ belong to $A$ and in fact generate $A$. So the image of $\phi$ contains $A$. Since any Real divisor can be written as a linear combination of divisors of the form $x_i$, $x_i \in C_i$ or $x + \sigma(x)$, $\sigma(x) \neq x$, we see that $\phi( \mathcal{O}(D) ) \in A$ for any Real divisor $D$. Then by Lemma \ref{lem:rdivgen}, it follows that the image of $\phi$ is $A$.

Now we show $\phi$ is injective. If $n = 0$, then $\sigma$ acts freely. Let $M = \Sigma/\sigma$. Then $\Sigma \to M$ is the orientation double cover of $M$ and the equivariant local system $\mathbb{Z}_-$ descends via $\sigma$ to the orientation local system $\mathbb{Z}_{orn}$ on $M$. Hence
\[
H^2_{\mathbb{Z}_2}(\Sigma ; \mathbb{Z}_-) \cong H^2(M ; \mathbb{Z}_{orn}) \cong H_0(M ; \mathbb{Z}) \cong \mathbb{Z}.
\]
If $n=0$, then $A \cong \mathbb{Z}$. Hence $\phi$ can be indentified with a surjective homomorphism $\phi : \mathbb{Z} \to \mathbb{Z}$. This implies that $\phi$ is injective.

If $n > 0$, then the Borel spectral sequence yield a short exact sequence
\[
0 \to \mathbb{Z}_2^c \to H^2_{\mathbb{Z}_2}(\Sigma ; \mathbb{Z}_-) \to \mathbb{Z} \to 0.
\]
This sequence splits, so $H^2_{\mathbb{Z}_2}(\Sigma ; \mathbb{Z}_-) \cong \mathbb{Z} \oplus \mathbb{Z}_2^c$. Using localisation in equivariant cohomology one finds that $c = n-1$, so $H^2_{\mathbb{Z}_2}(\Sigma ; \mathbb{Z}_-) \cong \mathbb{Z} \oplus \mathbb{Z}_2^{n-1}$. Similarly, one sees that $A \cong \mathbb{Z} \oplus \mathbb{Z}_2^{n-1}$, so $\phi$ can be identified with a surjective homomorphism $\phi : \mathbb{Z} \oplus \mathbb{Z}_2^{n-1} \to \mathbb{Z} \oplus \mathbb{Z}_2^{n-1}$. Such a homomorphism is easily seen to be an isomorphism.
\end{proof}

Now if $\Sigma \to X$ is a Real embedded surface in a $4$-manifold $X$, then the normal bundle $N_\Sigma$ admits the structure of a Real line bundle. By Proposition \ref{prop:deg2}, the structure of $N_\Sigma$ is determined by the degree, which equals $[\Sigma]^2$ and the classes $u_i( N_\Sigma)$ for $i=1, \dots ,n$ where $C_1, \dots , C_n$ are the components of the fixed point set of $\sigma|_\Sigma$. Each circle $C_i$ must lie on a corresponding component $S \subset X$ of the fixed point set of $\sigma$ on $X$. Then $N_\Sigma|_{C_i}$ is easily seen to be the complexification of real line bundle $M_i$, the normal bundle of $C_i$ in $S$. Thus $M_i$ is trivial or non-trivial according to whether the loop $C_i$ preserves or reverses orientation in $S$. In particular, if $S$ is orientable then $M_i$ is automatically trivial.

\begin{proposition}\label{prop:n2}
Let $(X,\sigma)$ be a Real $4$-manifold and $\Sigma \to X$ a Real embedded surface. If each surface component of the fixed point set of $\sigma$ is orientable, then $[\Sigma]^2$ is even. Conversely if $[\Sigma]^2$ is odd, then $\Sigma$ intersects a non-orientable surface component of the fixed point set of $\sigma$.
\end{proposition}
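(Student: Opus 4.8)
The plan is to deduce this immediately from Proposition \ref{prop:deg2} applied to the normal bundle $N_\Sigma$, regarded as a Real line bundle on $\Sigma$. First I would recall the setup from the paragraph preceding the statement: $\sigma$ restricts to an orientation reversing involution on $\Sigma$ whose fixed point set is a disjoint union of embedded circles $C_1, \dots, C_n$, and $N_\Sigma$ carries a Real structure. Each $C_i$ is a connected $1$-dimensional subset of $X^\sigma$, so on dimensional grounds it cannot be an isolated fixed point and must lie on a single surface component $S_i$ of the fixed point set of $\sigma$ on $X$. As already noted, $N_\Sigma|_{C_i}$ is the complexification of the normal line bundle $M_i$ of $C_i$ in $S_i$, so by definition $u_i(N_\Sigma) = w_1(M_i) \in \mathbb{Z}_2$; in particular $u_i(N_\Sigma) = 0$ whenever $S_i$ is orientable, since an orientable surface has no orientation reversing loop.

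Next I would apply Proposition \ref{prop:deg2} to the Real line bundle $E = N_\Sigma$. It gives $\phi(N_\Sigma) = (\deg N_\Sigma, u_1(N_\Sigma), \dots, u_n(N_\Sigma)) \in A$, that is,
\[
\deg(N_\Sigma) \equiv u_1(N_\Sigma) + \cdots + u_n(N_\Sigma) \pmod 2 .
\]
Since $\deg(N_\Sigma)$ is the Euler number of the normal bundle, which equals $[\Sigma]^2$, and $u_i(N_\Sigma) = w_1(M_i)$, this reads
\[
[\Sigma]^2 \equiv \sum_{i=1}^n w_1(M_i) \pmod 2 .
\]
Now the two assertions follow at once. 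If every surface component of $X^\sigma$ is orientable, then each $M_i$ is trivial, so every $w_1(M_i) = 0$ and hence $[\Sigma]^2$ is even (this includes the case $n=0$, where the right-hand side is an empty sum). Conversely, if $[\Sigma]^2$ is odd then $\sum_i w_1(M_i) \equiv 1 \pmod 2$, so $w_1(M_i) \neq 0$ for some $i$; then $M_i$ is non-trivial, which means $C_i$ reverses orientation in $S_i$, forcing $S_i$ to be non-orientable. Since $C_i \subset \Sigma \cap S_i$, the surface $\Sigma$ meets the non-orientable component $S_i$, as claimed.

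I expect no genuine obstacle here: all the content is in Proposition \ref{prop:deg2}, and what remains is the bookkeeping identifications $\deg N_\Sigma = [\Sigma]^2$ and $u_i(N_\Sigma) = w_1(M_i)$, both already laid out in the paragraph before the statement. The only point deserving a sentence of care is the observation that a fixed circle of $\sigma|_\Sigma$ necessarily lies inside a $2$-dimensional component of $X^\sigma$ rather than among the isolated fixed points, which is clear for dimension reasons.
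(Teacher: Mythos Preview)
Your proof is correct and follows essentially the same argument as the paper: apply Proposition~\ref{prop:deg2} to the Real normal bundle $N_\Sigma$ to obtain $[\Sigma]^2 \equiv \sum_i u_i(N_\Sigma) \pmod 2$, then use the identification $u_i(N_\Sigma) = w_1(M_i)$ from the preceding paragraph to conclude. Your version is slightly more explicit in justifying why each fixed circle $C_i$ lies on a surface component of $X^\sigma$ and in noting the $n=0$ case, but the logic is identical.
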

\begin{proof}
Let $C_1, \dots , C_n$ be the components of the fixed point set of $\sigma|_{\Sigma}$. Then by Proposition \ref{prop:deg2}, we have $[\Sigma]^2 = u_1(N_\Sigma) + \cdots + u_n(N_\Sigma) \; ({\rm mod} \; 2)$. If each surface component of the fixed point set of $\sigma$ is orientable, then $u_i(N_\Sigma) = 0$ for all $i$ and hence $[\Sigma]^2$ is even. Conversely if $[\Sigma]^2$ is odd, then $u_i(N_\Sigma) = 1$ for some $i$, which means that $C_i$ defines a non-orientable loop in a surface component of the fixed point set of $\sigma$.
\end{proof}

%%%%%%%%%%%%%%%%%%%%%%%%%%%%%%%%%%%%%%%%%%%%%%%%%%%%%%%%
\section{Real Seiberg--Witten invariants}\label{sec:rsw}

In this section we review the notion of Real Seiberg--Witten invariants and recall some of their basic properties, following \cite{tw,bar3}.

Let $X$ be a compact, oriented, smooth $4$-manifold. Let $\sigma : X \to X$ be a smooth, orientation reversing involution. Let $g$ be a $\sigma$-invariant Riemannian metric on $X$. Let $c : Spin^c(4) \to Spin^c(4)$ be the involution which is trivial on $Spin(4)$ and is given by complex conjugation on $S^1 \subset Spin^c(4)$. Let $\mathfrak{s}$ be a spin$^c$-structure on $X$ with corresponding principal $Spin^c(4)$-bundle $P \to X$ which is a lift of the $SO(4)$-frame bundle of $X$ to $Spin^c(4)$. A {\em Real structure} on $\mathfrak{s}$ is a lift $\widehat{\sigma}$ of $\sigma$ to $P$ covering the natural lift of $\sigma$ to the frame frame bundle and satisfying $\widehat{\sigma}(ph) = \widehat{\sigma}(p) c(h)$ for all $p \in P$, $h \in Spin^c(4)$ and such $\widehat{\sigma}^2 = -1$. As explained in \cite{bar3}, this is equivalent to giving an involutive, anti-linear lift of $\sigma$ to the spinor bundles $S^{\pm}$ respecting the Hermitian structure and respecting Clifford multiplication. In particular, the spinor bundles $S^{\pm}$ of a Real spin$^c$-structures are Real vector bundles and the determinant line $L = det(S^{\pm})$ is a Real line bundle. It follows that $c(\mathfrak{s}) \in H^2(X ; \mathbb{Z})$ lifts to a class in $H^2_{\mathbb{Z}_2}(X ; \mathbb{Z}_-)$.

A {\em configuration} for the Seiberg--Witten equations (with respect to $(X,\mathfrak{s},g)$) is a pair $(A , \psi)$ where $A$ is a spin$^c$-connection which projects to the Levi--Civita connection on the frame bundle of $X$ and $\psi$ is a positive spinor. Suppose now that $g$ is $\sigma$-invariant and that $\widehat{\sigma}$ is a Real structure on $\mathfrak{s}$. Then $\widehat{\sigma}$ induces an involution on the space of configurations and we say that $(A , \psi)$ is a {\em Real configuration} if it is preserved by this involution. Let $\eta \in i \Omega^+(X)$ be an imaginary self-dual $2$-form on $X$ satisfying $\sigma^*(\eta) = -\eta$. The {\em Real Seiberg--Witten equations} with respect to $(X , \sigma , g , \eta , \mathfrak{s} , \widehat{\sigma})$ are the usual Seiberg--Witten equations
\begin{align*}
D_A \psi &= 0, \\
F^+_A + \eta &= q(\psi)
\end{align*}
restricted to Real configurations. Define the {\em Real gauge group} $\mathcal{G}_R = \{ f : X \to S^1 \; | \; \sigma^*(f) = \overline{f} \}$. Then $\mathcal{G}_R$ acts on Real configurations by gauge transformation and this action restricts to an action on the set of solutions of the Seiberg--Witten equations. Let $b_+(X)^{-\sigma}$ denote the dimension of the space of harmonic self-dual $2$-forms $\omega$ on $X$ satisfying $\sigma^*(\omega) = -\omega$. If $b_+(X)^{-\sigma} > 0$, then for generic $\eta$ the moduli space $\mathcal{M}$ of $\mathcal{G}_R$-equivalence classes of Real solutions to the Seiberg--Witten moduli spaces is a compact smooth manifold. The Real Seiberg--Witten invariants of $(X,\sigma)$ are then defined by evaluating cohomology classes of $B\mathcal{G}_R$ over $\mathcal{M}$. If $b_+(X)^{-\sigma} > 1$, then the invariants do not depend on the choice of $(g,\eta)$, whereas if $b_+(X)^{-\sigma} = 1$, then the invariants depend on the choice of a chamber similar to the usual Seiberg--Witten invariants. For simplicity, we will restrict attention to the pure Real Seiberg--Witten invariant. We will also assume that $b_+(X)^{-\sigma} > 1$ so that chambers do not need to be considered. The (pure) Real Seiberg--Witten invariant comes in two forms: a mod $2$ invariant
\[
SW_R(X , \mathfrak{s}) \in \mathbb{Z}_2
\]
which is valued in $\mathbb{Z}_2$ and is always defined, and an integer invariant
\[
SW_{R,\mathbb{Z}}(X , \mathfrak{s}) \in \mathbb{Z}
\]
which reduces to $SW_R(X,\mathfrak{s})$ mod $2$, but is only defined under certain conditions. Sufficient conditions for the integer invariant to be defined are: 
\begin{itemize}
\item[(1)]{$(c(\mathfrak{s})^2 - \sigma(X))/8 = b_+(X)^{-\sigma}$,}
\item[(2)]{$b_+(X)^{-\sigma}$ is even,}
\item[(3)]{$b_1(X)^{-\sigma} = 0$,}
\end{itemize}
where $b_1(X)^{-\sigma}$ is the dimension of the space of harmonic $1$-forms $\alpha$ satisfying $\sigma^*(\alpha) = -\alpha$. Strictly speaking, the invariant $SW_{R,\mathbb{Z}}(X , \mathfrak{s})$ also depends on a choice of orientation of a certain line bundle over the Real Jacobian of $X$. Without making this choice $SW_{R,\mathbb{Z}}(X,\mathfrak{s})$ is only defined up to an overall sign ambiguity. If one prefers to have a genuine integer invariant, then the absolute value $| SW_{R,\mathbb{Z}}(X , \mathfrak{s})|$ should be taken.

In the definition of the Real Seiberg--Witten invariant, we considered only configurations $(A , \psi)$ where $A$ projects to the Levi--Civita connection on the frame bundle. However it will be convenient to consider a more general situation, following \cite{moy,os}. Let $g$ be a $\sigma$-invariant metric on $X$ and suppose $\nabla$ is a $\sigma$-invariant, metric compatible connection on $TX$. Then a configuration with respect to $(X , \mathfrak{s} , g , \nabla)$ is a pair $(A , \psi)$, where $A$ is a spin$^c$-connection on $\mathfrak{s}$ which projects to $\nabla$ on the frame bundle and $\psi$ is a positive spinor. Suppose $\mathfrak{s}$ is given a Real structure $\widehat{\sigma}$. Since $\nabla$ is assumed to be $\sigma$-invariant it follows that $\widehat{\sigma}$ induces an involution on the space of configurations and we define a Real configuration to be one that is fixed by this involution. Now we may consider the moduli space of Real solutions to the Seiberg--Witten equations with respect to $(X , \sigma , g , \nabla , \eta , \mathfrak{s} , \widehat{\sigma})$. One again proves that if $b_+(X)^{-\sigma} > 1$, then for generic $\eta$ the moduli space of solutions is a compact manifold (smoothness is a straighforward application of Sard--Smale. Compactness follows from \cite[\textsection 4]{os}). Evaluating cohomology classes of $\mathcal{G}_R$ on this moduli space gives invariants which are independent of the choice of $\nabla$, and hence agree with the usual definition of the Real Seiberg--Witten invariants. This follows by a straightforward adaptation of \cite[Theorem 4.6]{os} to the Real setting.

We now proceed to summarise some key properties of the Real Seiberg--Witten invariant. First we need to make a few remarks concerning connected sums. If $(X,\sigma)$ is a smooth $4$-manifold with a Real structure, then a necessary condition for $(X,\sigma)$ to admit a Real spin$^c$-structure is that the fixed point set of $\sigma$ contains no isolated points (first of all recall that if $\sigma$ preserves a spin structure and is odd in the sense that its lift to the spin bundle squares to $-1$, then the codimension of the fixed point set is $2$ mod $4$ \cite[Proposition 8.46]{ab}. The same result carries over to the setting of odd spin$^c$ structures because locally any spin$^c$-structure reduces to a spin structure). Thus we will mostly be interested in Real structures whose fixed point set is a collection of embedded surfaces. If $(X_1 , \sigma_1), (X_2, \sigma_2)$ are smooth $4$-manifolds with Real structures and if $\sigma_1,\sigma_2$ have non-isolated fixed points, then we can form the equivariant connected sum $(X,\sigma) = (X_1 \# X_2 , \sigma_1 \# \sigma_2)$ by removing open balls around non-isolated fixed points $x_1 \in X_1$, $x_2 \in X_2$ and identifying their boundaries equivariantly. In general the resulting involution $\sigma$ on $X$ will depend on the choice of fixed points $x_1,x_2$. If $\mathfrak{s}_1, \mathfrak{s}_2$ are Real spin$^c$-structures on $X_1,X_2$, then the connected sum spin$^c$-structure $\mathfrak{s} = \mathfrak{s}_1 \# \mathfrak{s}_2$ inherits a Real structure, which is well-defined up to isomorphism \cite[\textsection 9]{bar3}.

Let $(X , \sigma)$ be a smooth $4$-manifold with Real structure. Consider the blow-up operation in the Real setting. In fact there are two possible operations which could be regarded as the Real version of blowing up. First, we could blow up $X$ along a non-isolated fixed point $x$ of $\sigma$. We define the Real blowup of $X$ along $x$ to be the equivariant connected sum $(X \# \overline{\mathbb{CP}^2} , \sigma \# \tau)$, where $\tau$ is complex conjugation and where the connected sum is carried out by removing open balls around $x \in X$ and around a fixed point of $\tau$ and identifying their boundaries (it doesn't matter which fixed point of $\tau$ is chosen since the fixed point set of $\tau$ is connected). Since $\overline{\mathbb{CP}^2}$ is simply-connected and $\tau$ acts as $-1$ on $H^2( \overline{\mathbb{CP}^2} ; \mathbb{Z})$, it is easily shown that every spin$^c$-structure on $\overline{\mathbb{CP}^2}$ admits a Real structure, which is unique up to isomorphism. In particular, if $\mathfrak{s}$ is a Real spin$^c$-structure on $X$, then we obtain a Real spin$^c$-structure $\mathfrak{s}'$ on the blowup, given by $\mathfrak{s}' = \mathfrak{s} \# \kappa$, where $\kappa$ is a Real spin$^c$-structure on $\overline{\mathbb{CP}^2}$ with $c(\kappa)^2 = -1$ (there are two choices for $\kappa$). 

The second type of blowup operation that we wish to consider is the blowup of $(X, \sigma)$ at a pair of points $x , \sigma(x) \in X$, where $x$ is not a fixed point of $\sigma$. We define the Real blowup of $X$ along $x,\sigma(x)$, to be the connected sum $X' = X \# \overline{\mathbb{CP}^2} \# \overline{\mathbb{CP}^2}$, which carries an obvious Real structure $\sigma'$ exchanging the two $\overline{\mathbb{CP}^2}$ summands. If $\mathfrak{s}$ is a Real spin$^c$-structure on $X$ and $\kappa$ is any spin$^c$-structure on $\overline{\mathbb{CP}^2}$, then $\mathfrak{s}' = \mathfrak{s} \# \kappa \# \overline{\kappa}$ admits a Real structure in a natural way, where $\overline{\kappa}$ denotes the charge conjugate of $\kappa$. To see this, we can view $X'$ as being given by $X' = X_0 \cup_Y Z$, where $X_0$ is $X$ with open balls around $x$ and $\sigma(x)$ removed, $Z$ is the disjoint union of two copies of $\overline{\mathbb{CP}^2}$ minus an open ball and $Y$ is the disjoint union of two copies of $S^3$. The decomposition $X' = X_0 \cup_Y Z$ is $\mathbb{Z}_2$-equviariant, where the involution on $X_0$ is the restriction of $\sigma$ to $X_0$ and the involutions $\sigma_Y, \sigma_Z$ on $Y$ and $Z$ swap the two components. The Real spin$^c$-structure $\mathfrak{s}$ restricts to a Real spin$^c$-structure $\mathfrak{s}_{X_0}$ on $X_0$ and the Real spin$^c$-structure $\kappa \cup \overline{\kappa}$ on $\overline{\mathbb{CP}^2} \cup \overline{\mathbb{CP}^2}$ restricts to a Real spin$^c$-structure $\mathfrak{s}_Z$ on $Z$. Since $Y$ admits a unique Real spin$^c$-structure, the Real spin$^c$-structures $\mathfrak{s}_{X_0}, \mathfrak{s}_Z$ can be patched together along $Y$. Any automorphism of the unique Real spin$^c$-structure on $Y$ is given by a smooth map $f : Y \to S^1$ satisfying $\sigma_Y^*(f) = f$. Clearly $f$ extends to a smooth map $\widetilde{f} : Z \to S^1$ satisfying $\sigma_Z^*(\widetilde{f}) = \widetilde{f}$. This implies that up to isomorphism, the Real structure on $\mathfrak{s}'$ does not depend on the choice of isomorphism $\mathfrak{s}_{X_0}|_Y \cong \mathfrak{s}_{Z}|_Y$.

We are now ready to state the blowup formula:

\begin{proposition}\label{prop:blowup}
Let $X$ be a compact, oriented, smooth $4$-manifold and $\sigma$ a Real structure. Suppose that $b_+(X)^{-\sigma} > 1$. Let $\mathfrak{s}$ be a Real spin$^c$-structure on $X$. Let $X'$ be either the Real blowup of $X$ along a fixed point $x$, or the Real blowup of $X$ along a pair of non-fixed points $x,\sigma(x)$. Let $\mathfrak{s}' = \mathfrak{s} \# \kappa$ in the fixed case or $\mathfrak{s} \# \kappa \# \overline{\kappa}$ in the non-fixed case, where $\kappa$ is a spin$^c$-structure on $\overline{\mathbb{CP}^2}$ with $c(\kappa)^2 = -1$. Then $SW_R(X' , \mathfrak{s}') = SW_R(X , \mathfrak{s})$. Further, if $(c(\mathfrak{s})^2 - \sigma(X))/8 = b_+(X)^{-\sigma}$, $b_+(X)^{-\sigma}$ is even and $b_1(X)^{-\sigma} = 0$, then $|SW_{R,\mathbb{Z}}(X',\mathfrak{s}')| = |SW_{R,\mathbb{Z}}(X,\mathfrak{s})|$.
\end{proposition}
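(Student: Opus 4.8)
\emph{Strategy.}
The plan is to deduce the formula from an equivariant version of the standard Seiberg--Witten neck-stretching and gluing argument behind the ordinary blowup formula (see \cite{os} and the references therein), carried out throughout on the $\mathbb{Z}_2$-invariant subspaces of the configuration and gauge spaces, as in the framework of Section \ref{sec:rsw} and \cite{bar3}. Fix a $\sigma$-invariant metric and a generic Real perturbation $\eta$; since $b_+(X)^{-\sigma} > 1$ there are no reducible Real solutions, so $\mathcal{M}_R(X,\mathfrak{s})$ is a compact $0$-manifold. Present $X'$ with a long cylindrical neck as $X_0 \cup_Y Z$, where $X_0$ is $X$ with one (fixed case) or two (non-fixed case) small $\sigma$-invariant balls removed, $Y$ is the corresponding copy of $S^3$ (or pair of copies exchanged by $\sigma'$) with its unique Real spin$^c$-structure, and $Z$ is $\overline{\mathbb{CP}^2}\setminus B^4$ (resp.\ two copies exchanged by $\sigma'$) with cylindrical ends, carrying $\kappa$ (resp.\ $\kappa\cup\overline{\kappa}$). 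On the neck $\sigma'$ is an isometry of $Y$ (or the swap) times the identity, and since $\overline{\mathbb{CP}^2}$ contributes nothing to $H^1$ or $H^+$ we have $b_+(X')^{-\sigma'} = b_+(X)^{-\sigma}$ and $b_1(X')^{-\sigma'} = b_1(X)^{-\sigma}$, so non-degeneracy and transversality hold uniformly in the neck length.

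\emph{The local model on $Z$.}
For one copy of $\overline{\mathbb{CP}^2}\setminus B^4$ with a cylindrical end and the spin$^c$-structure $\kappa$ with $c(\kappa)^2 = -1$, the moduli space of finite-energy solutions asymptotic to the unique reducible on $S^3$ is, after a small generic perturbation, a single irreducible point cut out transversally; this is the input underlying the ordinary blowup formula. In the fixed-point case complex conjugation $\tau$ acts on this one-point moduli space, hence fixes it, so the solution is Real; as its deformation operator already has vanishing cokernel and the relevant $H^1$ vanishes, the $\tau$-invariant part of the deformation operator is also surjective, so the point is Real-transverse. Thus $\mathcal{M}_R(Z,\kappa)$ is a single transversally cut-out point. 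In the non-fixed case $\sigma'$ interchanges the two copies, so a Real configuration on $Z$ is determined by an unconstrained configuration on one copy (the other being its charge-conjugate image) and a Real perturbation restricts there to an arbitrary perturbation; hence $\mathcal{M}_R(Z,\kappa\cup\overline{\kappa})$ is identified with the ordinary one-point moduli space on a single $\overline{\mathbb{CP}^2}\setminus B^4$.

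\emph{Gluing and the mod $2$ formula.}
By the Real analogue of the Seiberg--Witten gluing theorem --- the usual pregluing and Banach-space implicit function theorem argument of \cite[\textsection 4]{os} performed $\mathbb{Z}_2$-equivariantly, transversality being inherited since passing to $\mathbb{Z}_2$-invariants is exact and there are no reducibles --- for long enough neck $\mathcal{M}_R(X',\mathfrak{s}')$ is diffeomorphic to the fibre product of $\mathcal{M}_R(X_0,\mathfrak{s}_{X_0})$ and $\mathcal{M}_R(Z,\cdot)$ over the space of Real asymptotic limits on $Y$. Since $H^1(S^3)=0$ the latter space is a point, and $\mathcal{M}_R(Z,\cdot)$ is a point, so $\mathcal{M}_R(X',\mathfrak{s}')\cong\mathcal{M}_R(X_0,\mathfrak{s}_{X_0})$; capping the cylindrical end of $X_0$ by a ball (which contributes a unique Real solution) gives $\mathcal{M}_R(X_0,\mathfrak{s}_{X_0})\cong\mathcal{M}_R(X,\mathfrak{s})$. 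Counting points mod $2$ yields $SW_R(X',\mathfrak{s}') = SW_R(X,\mathfrak{s})$.

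\emph{The integer formula.}
The hypotheses making $SW_{R,\mathbb{Z}}$ defined pass to the blowup: with $\epsilon=-1$ (fixed case) or $\epsilon=-2$ (non-fixed case) one has $c(\mathfrak{s}')^2 = c(\mathfrak{s})^2+\epsilon$ and $\sigma(X')=\sigma(X)+\epsilon$, while $b_+^{-\sigma}$, $b_1^{-\sigma}$ are unchanged, so $(c(\mathfrak{s}')^2-\sigma(X'))/8 = (c(\mathfrak{s})^2-\sigma(X))/8 = b_+(X')^{-\sigma'}$, this remains even, and $b_1(X')^{-\sigma'}=0$. It remains to see that the gluing diffeomorphism respects the orientation data. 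Up to a small gluing correction the deformation operator over $X'$ is the direct sum of those over $X_0$ and over $Z$; the operator over $Z$ has trivial Real kernel and cokernel and so contributes a canonical $\pm 1$ to the determinant line, transporting an orientation of the determinant line from $X_0$; and since $\overline{\mathbb{CP}^2}$ is simply connected, the Real Jacobians and Real gauge groups of $X$ and $X'$ are canonically identified, transporting the orientation of the auxiliary line bundle over the ($0$-dimensional, by hypothesis (3)) Real Jacobian. Hence the gluing map is orientation-compatible up to the overall sign inherent in $SW_{R,\mathbb{Z}}$, giving $|SW_{R,\mathbb{Z}}(X',\mathfrak{s}')| = |SW_{R,\mathbb{Z}}(X,\mathfrak{s})|$. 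I expect this orientation bookkeeping across the Real gluing, rather than the (standard) analysis or topology, to be the step demanding the most care.
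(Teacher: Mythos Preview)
Your approach via equivariant neck-stretching and gluing is genuinely different from the paper's. The paper disposes of the fixed-point case by citing the connected-sum formulas already proved in \cite{bar3}, and for the non-fixed case it works entirely at the level of Real Bauer--Furuta invariants: the Real Bauer--Furuta map of $(X',\mathfrak{s}')$ factors as $f' = f \wedge h$, where $f$ is that of $(X,\mathfrak{s})$ and $h$ is the ordinary $S^1$-equivariant Bauer--Furuta map of $(\overline{\mathbb{CP}^2},\kappa)$ restricted to $\mathbb{Z}_2\subset S^1$; since $b_+(\overline{\mathbb{CP}^2})=0$ and $c(\kappa)^2=\sigma(\overline{\mathbb{CP}^2})$, the map $h$ has degree $\pm 1$, and the integral invariant is recovered as half the degree of the Bauer--Furuta map. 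The stable-homotopy route packages the gluing analysis, the positive-dimensional case, and the orientation compatibility into a single multiplicativity statement for degrees; your direct analytic route is closer to the classical proofs of the ordinary blowup formula and makes the geometry more visible, at the cost of redoing by hand the orientation bookkeeping you rightly flag as the delicate step.

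One genuine gap: you assert that $\mathcal{M}_R(X,\mathfrak{s})$ is a compact $0$-manifold merely because $b_+(X)^{-\sigma}>1$. That hypothesis rules out reducibles for generic $\eta$ but says nothing about the expected dimension; the mod $2$ pure invariant $SW_R$ is defined in all dimensions and in general is obtained by evaluating a cohomology class of $B\mathcal{G}_R$ on a positive-dimensional moduli space. Your gluing still yields a diffeomorphism $\mathcal{M}_R(X',\mathfrak{s}')\cong\mathcal{M}_R(X,\mathfrak{s})$ of manifolds of the same dimension (the $Z$ piece contributes zero to the index), but you must then check that the tautological class used to define $SW_R$ transports correctly across the gluing; this is routine, and is exactly what the Bauer--Furuta formulation subsumes, but as written your mod $2$ argument only treats the zero-dimensional case. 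A smaller quibble: calling the unique solution on $\overline{\mathbb{CP}^2}\setminus B^4$ ``irreducible'' is not quite accurate---the relevant local input is better phrased as the relative Bauer--Furuta map (equivalently, the relative invariant) of $(\overline{\mathbb{CP}^2}\setminus B^4,\kappa)$ being $\pm 1$, which is what your argument actually uses.
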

\begin{proof}
In the fixed case, this follows from \cite[Theorem 9.1]{bar3} for the mod $2$ case and \cite[Theorem 9.3]{bar3} for the integral case. Now consider the non-fixed case. We make use of Real Bauer--Furuta invariants, as in \cite{bar3}. Let $f$ denote the Real Bauer--Furuta invariant of $(X,\mathfrak{s})$. Then the Real Bauer--Furuta invariant of $(X' , \mathfrak{s}')$ is easily seen to be $f' = f \wedge h$, where $h$ is the ordinary $S^1$-equivariant Bauer--Furuta invariant of $(\overline{\mathbb{CP}^2} , \kappa)$, but restricted to $\mathbb{Z}_2 \subset S^1$. Then we have $|SW_{R,\mathbb{Z}}(X , \mathfrak{s})| = (1/2)|deg(f)|$ and $|SW_{R,\mathbb{Z}}(X' , \mathfrak{s}')| = (1/2) |deg(f')|$, by \cite[Proposition 5.9 (2)]{bar3}. So it suffices to show that $|deg(f)| = |deg(f')|$. Since $|deg(f')| = |deg(f)| |deg(g)|$, we just need to show that $deg(g) = \pm 1$. This follows because $b_+( \overline{\mathbb{CP}^2} ) = 0$ and $c(\kappa)^2 = \sigma( \overline{\mathbb{CP}^2})$ (see \cite[\textsection 3]{bar}, where the degree of the Bauer--Furuta map of a family of $4$-manifolds is computed and restrict to the case that the family is a point).
\end{proof}

The next result gives a sufficient condition for the Real Seiberg--Witten invariant to be non-vanishing.

\begin{proposition}\label{prop:loc}
Let $X$ be a compact, oriented, smooth $4$-manifold and $\sigma$ a Real structure with $b_1(X)^{-\sigma} = 0$ and $b_+(X)^{-\sigma} > 1$. Let $\mathfrak{s}$ be a spin$^c$-structure on $X$ that admits at least one Real structure. If the Seiberg--Witten invariant $SW(X , \mathfrak{s})$ is odd, then there exists a Real structure on $\mathfrak{s}$ for which $SW_R(X , \mathfrak{s})$ is non-zero.
\end{proposition}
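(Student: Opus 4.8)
The plan is to deduce this from a localization (Smith-type) argument: a choice of Real structure on $\mathfrak{s}$ makes the ordinary Seiberg--Witten setup $\mathbb{Z}_2$-equivariant, and the Real Seiberg--Witten moduli spaces appear as the fixed-point set of the resulting involution; one then compares the mod $2$ count over the whole ordinary moduli space with the count over the fixed set. First I would fix a $\sigma$-invariant metric $g$ and a generic $\sigma$-anti-invariant perturbation $\eta \in i\Omega^+(X)$ (so $\sigma^*\eta = -\eta$), together with a Real structure $\widehat{\sigma}$ on $\mathfrak{s}$, which exists by hypothesis. Since $\eta$ is $\sigma$-anti-invariant, $\widehat{\sigma}$ induces an involution $\iota$ on the (irreducible part of the) configuration space which preserves the ordinary Seiberg--Witten moduli space $\mathcal{M} = \mathcal{M}(X,\mathfrak{s},g,\eta)$. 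The key structural claim is that the fixed-point set $\mathcal{M}^\iota$ is the disjoint union, over the isomorphism classes of Real structures $\widehat{\sigma}'$ on $\mathfrak{s}$, of the Real Seiberg--Witten moduli spaces $\mathcal{M}_R(X,\mathfrak{s},\widehat{\sigma}')$; the set of such isomorphism classes is a torsor over (a subquotient of) $H^1_{\mathbb{Z}_2}(X ; \mathbb{Z}_-)$, which is finite because $b_1(X)^{-\sigma} = 0$ forces $H^1(X;\mathbb{Z})^{-\sigma}$, and hence $H^1_{\mathbb{Z}_2}(X;\mathbb{Z}_-)$, to be finite (via the Borel spectral sequence, as in the proof of Theorem \ref{thm:eqh2}).

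Next I would address transversality and the localization comparison, which I would carry out through Real Bauer--Furuta invariants, as this is the cleanest way to avoid transversality difficulties for $\mathcal{M}$ itself. On the moduli side, a Sard--Smale argument (as in \cite[\textsection 4]{os}, adapted to the Real setting as indicated in Section \ref{sec:rsw}) shows that for generic $\sigma$-anti-invariant $\eta$ each $\mathcal{M}_R(X,\mathfrak{s},\widehat{\sigma}')$ is a closed smooth manifold cut out transversally; note $\mathcal{M}$ itself is generally \emph{not} transverse for such $\eta$, since $\dim \mathcal{M}_R$ and $\dim\mathcal{M}$ differ in general. On the finite-dimensional side, $\widehat{\sigma}$ upgrades the $S^1$-equivariant Bauer--Furuta map of $(X,\mathfrak{s})$ to a map equivariant for the larger group generated by $S^1$ and $\widehat{\sigma}$, whose $\widehat{\sigma}$-fixed-point restriction is a finite-dimensional model for the disjoint union of the Real Bauer--Furuta maps over the Real structures $\widehat{\sigma}'$. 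Applying the mod $2$ Smith/Borel localization theorem to the $\mathbb{Z}_2$-action generated by $\widehat{\sigma}$, and using $b_1(X)^{-\sigma} = 0$ and $b_+(X)^{-\sigma} > 1$ to guarantee that the invariants are extracted as honest mod $2$ degrees independent of auxiliary choices, I expect the identity
\[
SW(X,\mathfrak{s}) \equiv \sum_{[\widehat{\sigma}']} SW_R(X,\mathfrak{s},\widehat{\sigma}') \pmod 2,
\]
the sum being over the finitely many isomorphism classes of Real structures on $\mathfrak{s}$.

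Granting this, the conclusion is immediate: if $SW(X,\mathfrak{s})$ is odd then the right-hand side is odd, so some summand $SW_R(X,\mathfrak{s},\widehat{\sigma}')$ is odd and in particular non-zero, and this $\widehat{\sigma}'$ is the Real structure asserted to exist. The hard part will be the localization step, specifically the bookkeeping: identifying exactly which Real structures $\widehat{\sigma}'$ occur in $\mathcal{M}^\iota$ and with what multiplicity, and — because $\dim\mathcal{M}_R$ generally differs from $\dim\mathcal{M}$ — correctly absorbing the Euler class of the normal bundle of $\mathcal{M}_R$ inside $\mathcal{M}$, equivalently matching the powers of the $\mu$-classes of $B\mathcal{G}$ and $B\mathcal{G}_R$ used to define the two invariants. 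Passing to Real Bauer--Furuta invariants, where each invariant is a mod $2$ degree of an explicit finite-dimensional equivariant map, is the device I would use to make this bookkeeping tractable and to sidestep the failure of transversality for $\mathcal{M}$.
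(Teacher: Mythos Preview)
The paper does not give an argument here; it simply invokes \cite[Theorem~1.8]{bar3} in the special case $\Delta = m = 0$. Your outline is precisely the localization argument underlying that cited result---decompose the $\widehat{\sigma}$-fixed locus of the ordinary moduli problem as a disjoint union of Real moduli spaces indexed by the finitely many Real structures on $\mathfrak{s}$, and extract the congruence $SW(X,\mathfrak{s}) \equiv \sum_{[\widehat{\sigma}']} SW_R(X,\mathfrak{s},\widehat{\sigma}') \pmod 2$ via a Smith-type comparison at the Bauer--Furuta level---so your approach is correct and matches what the citation delivers; your identification of the dimension-mismatch/Euler-class bookkeeping as the delicate step, and of the Bauer--Furuta model as the device to handle it, is accurate.
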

\begin{proof}
This follows from \cite[Theorem 1.8]{bar3}, in the special case $\Delta = m = 0$.
\end{proof}

Proposition \ref{prop:loc} allows us to deduce non-vanishing of the Real Seiberg--Witten invariant from non-vanishing of the ordinary Seiberg--Witten invariant. However for applications to Real surfaces, we would like to have examples of $4$-manifolds where the ordinary Seiberg--Witten invariants are zero, but the Real Seibeg--Witten invariants are non-zero. The connected sum formula stated below implies that examples of such $4$-manifolds are readily obtained by taking connected sums. The connected sum formula shows that the Real Seiberg--Witten invariants of connected sums are often non-zero, in contrast with the ordinary Seiberg--Witten invariants which vanish on a connected sum $X_1 \# X_2$ with $b_+(X_1), b_+(X_2) > 0$.

The following connected sum formula is a special case of \cite[Theorem 1.12]{bar3}:

\begin{proposition}\label{prop:csf}
Let $X_1,X_2$ be compact, oriented, smooth $4$-manifolds and let $\sigma_1, \sigma_2$ be Real structures on them. Let $\mathfrak{s}_1, \mathfrak{s}_2$ be Real spin$^c$-structures on $X_1,X_2$ and suppose that for $i=1,2$ we have $(c(\mathfrak{s}_i)^2 - \sigma(X_i))/8 = b_+(X_i)^{-\sigma}$, $b_+(X_i)^{-\sigma}$ is even and positive and $b_1(X_i)^{-\sigma} = 0$. Suppose that $\sigma_1,\sigma_2$ act non-freely so that the equivariant connected sum $X_1 \# X_2$ exists. Then
\[
|SW_{R,\mathbb{Z}}(X_1 \# X_2 , \mathfrak{s}_1 \# \mathfrak{s}_2) | = 2 |SW_{R,\mathbb{Z}}(X_1 , \mathfrak{s}_1)| |SW_{R,\mathbb{Z}}(X_2 , \mathfrak{s}_2) |.
\]
\end{proposition}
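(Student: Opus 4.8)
The plan is to reduce this to the gluing theorem for Real Bauer--Furuta invariants, exactly as in the proof of the blowup formula above. The hypotheses $(c(\mathfrak{s}_i)^2 - \sigma(X_i))/8 = b_+(X_i)^{-\sigma}$, $b_+(X_i)^{-\sigma}$ even and $b_1(X_i)^{-\sigma} = 0$ are precisely the conditions under which the Real Bauer--Furuta invariant $f_i$ of $(X_i, \mathfrak{s}_i)$ is a well-defined $\mathbb{Z}_2$-equivariant stable map whose degree computes $|SW_{R,\mathbb{Z}}(X_i, \mathfrak{s}_i)| = (1/2)|\deg(f_i)|$ (using \cite[Proposition 5.9 (2)]{bar3}, the factor $1/2$ coming from the free $S^1/\mathbb{Z}_2$-action). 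Since $\sigma_1, \sigma_2$ act non-freely, we may form the equivariant connected sum by excising $\sigma$-invariant balls around fixed points $x_i \in X_i$, and the connected sum Real spin$^c$-structure $\mathfrak{s}_1 \# \mathfrak{s}_2$ is well-defined up to isomorphism by \cite[\textsection 9]{bar3}.

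The key step is the multiplicativity of the Real Bauer--Furuta invariant under equivariant connected sum: $f_{X_1 \# X_2} = f_1 \wedge f_2$. This is the content of \cite[Theorem 1.12]{bar3} (or its underlying gluing statement), and I would invoke it directly rather than reprove it. Granting this, one computes $|\deg(f_{X_1 \# X_2})| = |\deg(f_1)| \cdot |\deg(f_2)|$, and hence
\[
|SW_{R,\mathbb{Z}}(X_1 \# X_2, \mathfrak{s}_1 \# \mathfrak{s}_2)| = \tfrac{1}{2}|\deg(f_1 \wedge f_2)| = \tfrac{1}{2}|\deg(f_1)|\,|\deg(f_2)| = 2\,|SW_{R,\mathbb{Z}}(X_1,\mathfrak{s}_1)|\,|SW_{R,\mathbb{Z}}(X_2,\mathfrak{s}_2)|,
\]
where the factor of $2$ appears because each individual invariant absorbed a factor $1/2$ but the connected sum contributes only one such factor. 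I would also check that the numerical hypotheses are additive under connected sum, so that the left-hand side invariant is indeed defined: $b_+(X_1 \# X_2)^{-\sigma} = b_+(X_1)^{-\sigma} + b_+(X_2)^{-\sigma}$ is even, $b_1^{-\sigma}$ vanishes for the sum, and $(c(\mathfrak{s})^2 - \sigma(X))/8$ adds correctly because $c(\mathfrak{s}_1 \# \mathfrak{s}_2)^2 = c(\mathfrak{s}_1)^2 + c(\mathfrak{s}_2)^2$ and $\sigma(X_1 \# X_2) = \sigma(X_1) + \sigma(X_2)$.

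The main obstacle is bookkeeping the factor of $2$ correctly and making sure the degree formula $|SW_{R,\mathbb{Z}}| = (1/2)|\deg(f)|$ is applied consistently on both sides; the geometric content — the connected sum gluing — is imported wholesale from \cite[Theorem 1.12]{bar3}, so the proposition is essentially a special case. I would conclude by simply citing \cite[Theorem 1.12]{bar3} and noting that the stated formula is the case where the auxiliary parameters $\Delta$ and $m$ in that theorem are zero, with the numerical hypotheses here guaranteeing the integer invariants are defined.
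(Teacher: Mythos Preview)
Your approach matches the paper's: the proposition is stated there as a direct special case of \cite[Theorem 1.12]{bar3}, with no further proof given. Your additional elaboration via the smash product of Real Bauer--Furuta invariants and the degree bookkeeping $|SW_{R,\mathbb{Z}}| = \tfrac{1}{2}|\deg(f)|$ is consistent with how the paper argues elsewhere (e.g.\ in the proof of Proposition~\ref{prop:blowup} and Theorem~\ref{thm:sumnonzero}), though note the paper cites \cite[Proposition 1.9]{bar3} rather than Proposition~5.9(2) for that relation, and the remark about ``$\Delta = m = 0$'' in the paper pertains to \cite[Theorem 1.8]{bar3} (used for Proposition~\ref{prop:loc}), not to Theorem~1.12.
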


%%%%%%%%%%%%%%%%%%%%%%%%%%%%%%%%%%%%%%%%%%%%%%%%%%%%%%%%
\section{The adjunction inequality}\label{sec:adj1}

In this section we prove the adjunction inequality for Real embedded surfaces of non-negative self-intersection.

\begin{theorem}\label{thm:adjunctionr}
Let $X$ be a compact, oriented, smooth $4$-manifold and $\sigma$ a Real structure on $X$. Assume that $b_+(X)^{-\sigma} > 1$. Let $\mathfrak{s}$ be a Real spin$^c$-structure such that $SW_R(X , \mathfrak{s}) \neq 0$ (or that $SW_{R,\mathbb{Z}}(X , \mathfrak{s})$ is defined and non-zero). Let $\Sigma \subset X$ be an embedded Real surface of genus $g$.
\begin{itemize}
\item[(1)]{If $g>0$ and $[\Sigma]^2 \ge 0$, then the adjunction inequality holds:
\[
2g-2 \ge | \langle c(\mathfrak{s}) , [\Sigma] \rangle | + [\Sigma]^2.
\]
}
\item[(2)]{If $g=0$ then $[\Sigma]^2 \le 0$. Furthermore if $\sigma$ does not act freely and $[\Sigma]$ is non-torsion, then $[\Sigma]^2 < 0$.}
\end{itemize}

\end{theorem}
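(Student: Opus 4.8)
The plan is to adapt the standard Seiberg--Witten proof of the adjunction inequality (as in Kronheimer--Mrowka, Ozsv\'ath--Szab\'o) to the Real setting, exploiting the fact that a Real embedded surface $\Sigma$ carries an orientation reversing involution and hence has a fixed point set consisting of circles. First I would dispose of part (2): if $g=0$ and $[\Sigma]^2 \ge 0$ one can use the neck-stretching argument to derive a contradiction with $SW_R(X,\mathfrak{s}) \neq 0$ exactly as in the non-Real case, so $[\Sigma]^2 \le 0$; the refinement to $[\Sigma]^2 < 0$ when $\sigma$ is non-free and $[\Sigma]$ is non-torsion should follow because a sphere with $[\Sigma]^2 = 0$ would have to carry an orientation reversing involution with no fixed points (a free involution on $S^2$ is impossible since the quotient would be $\mathbb{RP}^2$, orientable double cover mismatch) — actually a free orientation-reversing involution on $S^2$ does not exist, so $\Sigma^\sigma$ consists of circles, and then one runs the argument using a flat Real connection on the degree-zero normal circle bundle, supplied by Lemma \ref{lem:flatreal}.

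For part (1), the core is the following: given $\Sigma$ with $[\Sigma]^2 \ge 0$ and genus $g>0$, put a metric on $X$ with a long cylindrical neck $[-T,T] \times Y$ where $Y$ is the circle bundle of degree $[\Sigma]^2$ over $\Sigma$; choose everything $\sigma$-equivariantly, which is possible because $Y \to \Sigma$ inherits a Real structure from the Real structure on the normal bundle $N_\Sigma$ (Proposition \ref{prop:deg2} describes $N_\Sigma$). Then I would let $T \to \infty$ and analyze the limiting Real Seiberg--Witten solutions on the two pieces: the complement $X \setminus \Sigma$ and the disc bundle neighborhood. The key point, following Ozsv\'ath--Szab\'o, is a vanishing argument: if the adjunction inequality fails, i.e.\ $2g - 2 < |\langle c(\mathfrak{s}), [\Sigma]\rangle| + [\Sigma]^2$, then on the neck the Seiberg--Witten solutions are forced (by an energy / curvature estimate on $\Sigma$ together with a Weitzenb\"ock-type inequality for the Dirac operator twisted along the fiber direction) to have the spinor vanish identically on the neck, or more precisely one gets an obstruction to the existence of finite-energy solutions that forces $SW_R(X,\mathfrak{s}) = 0$, a contradiction. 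The Real condition does not obstruct any of this: all the relevant moduli spaces, perturbations $\eta$ with $\sigma^*\eta = -\eta$, and gauge transformations in $\mathcal{G}_R$ can be chosen equivariantly, and $b_+(X)^{-\sigma} > 1$ guarantees the Real invariant is metric-independent so that stretching the neck is legitimate.

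The main obstacle, and the place where genuine Real-specific work is needed, is the analysis on the neck in the presence of the fixed circles $C_1, \dots, C_n \subset \Sigma$: the circle bundle $Y \to \Sigma$ is only Real (not just a principal circle bundle with a free action), so the reducible solutions one expects to see in the limit are governed by flat \emph{Real} connections, and I would invoke Lemma \ref{lem:flatreal} to produce such a flat Real connection of degree zero on the relevant line bundle over $\Sigma$ (after twisting $c(\mathfrak{s})$ appropriately). One must then check that the curvature/Chern--Weil computation comparing $\langle c(\mathfrak{s}), [\Sigma]\rangle$ with $2g-2$ goes through with the $\mathbb{Z}_-$-twisting and that the relevant eigenvalue estimate for the fiberwise Dirac operator is unaffected by the antilinear involution — intuitively it is, because $\widehat{\sigma}$ is antilinear and commutes with Clifford multiplication, so it preserves the spectral decomposition one needs. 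I expect the adaptation of the Ozsv\'ath--Szab\'o vanishing theorem to be essentially formal once the equivariant geometry of the neck is set up correctly, so the real content is verifying that the neck of the Real configuration decomposes $\sigma$-equivariantly and that the limiting moduli problem on $X \setminus \Sigma$ is still a well-posed Real Seiberg--Witten problem with a compact moduli space, for which I would cite the compactness results of \cite{os} adapted as in Section \ref{sec:rsw}.
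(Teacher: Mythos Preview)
Your outline for part (1) is roughly in the right spirit but diverges from the paper's actual route: the paper does \emph{not} stretch the neck on the degree-$[\Sigma]^2$ circle bundle directly. Instead it first treats only the case $[\Sigma]^2 = 0$ by neck-stretching (this is where Lemma~\ref{lem:flatreal} is used, and it genuinely requires degree zero), analysing the limiting solutions on $Y$ via the Mrowka--Ozsv\'ath--Yu description of monopoles on a trivial circle bundle. The case $[\Sigma]^2 > 0$ is then reduced to self-intersection zero by iterated \emph{Real blowups}: at a fixed point of $\sigma|_\Sigma$ if one exists, or at a pair $x,\sigma(x)$ otherwise, invoking the Real blowup formula (Proposition~\ref{prop:blowup}). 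A crucial Real-specific point you do not mention is that when $\sigma|_\Sigma$ is free the pairwise blowup drops $[\Sigma]^2$ by $2$ at each step, so one needs $[\Sigma]^2$ to be even; this is supplied by Proposition~\ref{prop:deg2}. Your proposal to cite Lemma~\ref{lem:flatreal} ``after twisting'' on a positive-degree bundle is not coherent as stated, since that lemma applies only in degree zero.

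For part (2) there is a genuine gap. Your argument for the strict inequality hinges on the claim that $S^2$ admits no free orientation-reversing involution; this is false, as the antipodal map is exactly such an involution (indeed the type $(n,a)=(0,1)$ in the classification at the start of Section~\ref{sec:realsurface}). Moreover, the hypothesis ``$\sigma$ does not act freely'' refers to $X$, not to $\Sigma$, so nothing forces $\sigma|_\Sigma$ to have fixed circles. The paper's proof of this refinement is entirely different and substantially more involved: it adapts a Fintushel--Stern argument, taking $2n$ parallel copies of $\Sigma$ exchanged in pairs by $\sigma$, blowing up $X$ at a fixed point of $\sigma$ disjoint from $\Sigma$, and tubing the exceptional sphere to all $2n$ copies to produce a Real $(-1)$-sphere $E_n$. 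Blowing this down and applying the Real blowup formula in both directions yields infinitely many Real spin$^c$-structures on $X'$ with non-vanishing Real Seiberg--Witten invariant, contradicting the finiteness of basic classes. Your sketch does not approach this mechanism at all.
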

\begin{proof}
(1) Assume first that $[\Sigma]^2 = 0$ and $g \ge 1$. Let $N_\Sigma$ denote the normal bundle of $\Sigma$, which is trivial since $[\Sigma]^2 = 0$. Let $g_0$ be a $\sigma$-invariant metric on $X$. For sufficiently small $\epsilon > 0$, the exponential map $exp : N_\Sigma \to X$ restricts to an embedding $exp : D_\epsilon \to X$, where $D_\epsilon$ is the open disc bundle in $N_\Sigma$ of radius $\epsilon$. Hence we can identify $D_\epsilon$ with an open neighbourhood of $\Sigma$ in $X$. Furthermore, $\sigma$ sends $D_\epsilon$ to itelf, since $g_0$ is $\sigma$-invariant. The action of $\sigma$ on $D_\epsilon$ covers the restriction $\sigma_\Sigma = \sigma|_{\Sigma} : \Sigma \to \Sigma$ of $\sigma$ to $\Sigma$ and acts by orientation reversing isometries on the fibres (orientation reversing because $\sigma$ reverses orientation on $\Sigma$ and preserves the orientation of $X$, hence must reverse orientation on the normal bundle). Let $Y$ denote the unit circle bundle in $N_\Sigma$. Since $\sigma$ preserves the metric on $N_\Sigma$, it induces an involution $\sigma_Y$ on $Y$. Then $D_\epsilon \cong [0,\epsilon) \times Y/\! \!\sim$ where $\sim$ collapses $\{0\} \times Y$ to $\Sigma$. Under this identification the action of $\sigma$ on $D_\epsilon$ is given by $\sigma(t,y) = (t , \sigma_Y(y))$. Let $g_\Sigma$ be a $\sigma$-invariant metric on $\Sigma$. Let $g_Y$ be a metric on $Y$ of the form $g_Y = \eta^2 + \pi^*(g_\Sigma)$, where $\pi : Y \to \Sigma$ is the projection and $i\eta$ is the connection $1$-form of a flat Real connection on $Y$, which exists by Lemma \ref{lem:flatreal}. By construction, $g_Y$ is $\sigma$-invariant. Let $g$ be a metric on $X$ whose restriction to the neck $[\epsilon/3 , 2\epsilon/3] \times Y \subset D_\epsilon$ equals a product metric of the form $dt^2 + g_Y$, where $t$ is the coordinate on $[0 , \epsilon)$. By averaging, we can assume that $g$ is $\sigma$-invariant. 

Over the neck, the tangent bundle of $X$ can be identified with a direct sum $TX|_{neck} \cong \mathbb{R} \oplus TY \cong \mathbb{R} \oplus \mathbb{R} \oplus T\Sigma$, where we use the connection $i\eta$ to split $TY$ into $\mathbb{R} \oplus T\Sigma$. Let $\nabla$ be a $g$-compatible connection on $TX$ which over the neck is of the form $\nabla_0 \oplus \nabla_\Sigma$, where $\nabla_0$ is the trivial connection on $\mathbb{R}^2$ and $\nabla_\Sigma$ is the pullback of the Levi--Civita connection on $\Sigma$ for $g_\Sigma$. By averaging we can assume $\nabla$ is $\sigma$-invariant.

For each real number $L > 0$, we obtain a $\sigma$-invariant metric $g(L)$ on $X$ by ``stretching the neck", which means to replace the neck $[\epsilon/3 , 2\epsilon/3] \times Y$ with $I \times Y$ where $I$ is an interval of length $L$. Similary, since $\nabla$ is translation invariant along the neck it defines a $g(L)$-compatible connection $\nabla(L)$ on $TX$ for all $L$. Since $SW_R(X , \mathfrak{s}) \neq 0$, there exists a solution to the Real Seiberg--Witten equations for $(X , \mathfrak{s})$ with respect to the metric $g(L)$, connection $\nabla(L)$ and zero perturbation. Letting $L \to \infty$ a standard neck-stretching argument \cite{km,os} implies that in the limit we get a solution to the Real Seiberg--Witten equations for $(Y , \mathfrak{s}|_Y)$ with respect to the metric $g_Y$, connection $\nabla_Y$ and with zero perturbation, where $\nabla_Y$ is the connection on $TY \cong \mathbb{R} \oplus T\Sigma$ which is trivial on the $\mathbb{R}$ summand and is $\nabla_\Sigma$ on the $T\Sigma$ summand.

Since $TX|_\Sigma \cong N_\Sigma \oplus T\Sigma$ and $N_\Sigma, T\Sigma$ are complex line bundles, this defines a complex structure on $TX|_\Sigma$. Let $\mathfrak{s}_{can}$ denote the canonical spin$^c$-structure associated to this complex structure. Any other spin$^c$-structure on $TX|_\Sigma$ is of the form $E \otimes \mathfrak{s}_{can}$ for some complex line bundle $E \to \Sigma$. In particular, we must have $\mathfrak{s}|_\Sigma \cong E \otimes \mathfrak{s}_{can}$ for some $E$. The negative spinor bundle for $\mathfrak{s}_{can}$ is given by $S_{can}^- = N_\Sigma \oplus K_\Sigma^{-1}$. Therefore the negative spinor bundle for $\mathfrak{s}|_\Sigma$ is of the form $S^-|_\Sigma = E \otimes (N_\Sigma \oplus K_\Sigma^{-1})$. Taking determinants gives $L|_\Sigma \cong E^2 \otimes N_\Sigma \otimes K_\Sigma^{-1}$, where $L$ denote the determinant line for $\mathfrak{s}$. Taking degrees gives $deg(L|_\Sigma) = 2 deg(E) + deg(N_\Sigma) + 2-2g$. Set $e = deg(E)$. Since we have assumed that $[\Sigma]^2 = 0$, we have $deg(N_\Sigma) = 0$. Then since $deg(L|_\Sigma) = \langle c(\mathfrak{s}) , [\Sigma] \rangle$, we have
\[
\langle c(\mathfrak{s}) , [\Sigma] \rangle = 2e + 2-2g.
\]
Since the identification $TX \cong N_\Sigma \oplus T\Sigma$ holds not just on $\Sigma$, but also on a tubular neighbourhood of $\Sigma$, it follows that it also holds on $Y$. In particular, the spinor bundle for the spin$^c$-structure $\mathfrak{s}|_Y$ is of the form $\pi^*( E \otimes (N_\Sigma \oplus K_\Sigma^{-1}) )$.

As shown in \cite[\textsection 5]{moy}, a solution to the Seiberg--Witten equations on $Y$ with respect to the metric $g_Y$, connection $\nabla_Y$ and zero perturbation is necessarily circle invariant and takes the form of a tuple $(B , \alpha , \beta)$, where $B$ is a Hermitian connection on $E$, $\alpha$ is a section of $E$ and $\beta$ is a section of $E \otimes N_\Sigma \otimes K_\Sigma^{*}$ satisfying
\begin{align*}
2F_B - F_{K_\Sigma} &= i( |\alpha|^2 - |\beta|^2 ) vol_\Sigma \\
\overline{\partial}_B \alpha &= 0 \\
\overline{\partial}_B^* \beta &= 0 \\
\alpha \overline{\beta} &= 0.
\end{align*}
For reducible solutions $\alpha = \beta = 0$ and then $2F_B = F_{K_\Sigma}$ which implies that $e = deg(E) = deg(K_\Sigma)/2 = g-1$. For irreducible solutions $\alpha \beta = 0$ implies that either $\alpha$ or $\beta$ is zero. If $\beta = 0$ and $\alpha \neq 0$, then $2F_B - F_{K_\Sigma} = i |\alpha|^2 vol_\Sigma$ implies that $e = deg(E) < deg(K_\Sigma)/2 = g-1$. But also $\alpha$ is a holomorphic section of $(E , \overline{\partial}_B)$, hence $deg(E) \ge 0$, so $0 \le e < g-1$. Similarly if $\alpha = 0$ and $\beta \neq 0$, then $\overline{\beta}$ is a holomorphic section of $K_\Sigma \otimes N_\Sigma^* \otimes E^*$ and one finds $g-1 < e \le 2g-2$. So in all cases we have $0 \le e \le 2g-2$. Hence
\[
| \langle c(\mathfrak{s}) , [\Sigma] \rangle | = 2| e +1-g | \le 2g-2
\]
which is the adjunction inequality in the case that $[\Sigma]^2 = 0$ and $g>0$.

Next, suppose that $g > 0$ and $[\Sigma]^2 > 0$. The idea is to use blowups to reduce to the case of zero self-intersection. However we need to be careful to ensure that the resulting blown-up surface is a Real surface. In the case that $\sigma|_\Sigma$ acts non-freely, there exists a point $p \in \Sigma$ which is a fixed point of $\sigma$. We blow up the point $p$. What this means is that we perform an equivariant connected sum of $(X , \sigma)$ with $(\overline{\mathbb{CP}^2} , \tau)$, where $\tau$ is complex conjugation. Letting $[x,y,z]$ denote homogeneous coordinates on $\overline{\mathbb{CP}^2}$ so that $\tau[x,y,z] = [\overline{x},\overline{y},\overline{z}]$, we see that the projective line $E \subset \overline{\mathbb{CP}^2}$ given by $z=0$ is a Real surface with respect to $\tau$. Now we perform the equivariant connected sum of $X$ and $\overline{\mathbb{CP}^2}$ by connecting $p$ and a fixed point of $E$, say $q = [1,0,0]$. Then the form the connected sum of surfaces $\Sigma' = \Sigma \# E$ which is a Real surface in $X \# \overline{\mathbb{CP}^2}$ and satisfies $[\Sigma']^2 = [\Sigma]^2 - 1$. Similarly we replace the Real spin$^c$-structure $\mathfrak{s}$ with $\mathfrak{s}' = \mathfrak{s} \# \kappa$, where $\kappa$ is a Real spin$^c$-structure on $\overline{\mathbb{CP}^2}$ with $c(\kappa)^2 = -1$. The blowup formula (Proposition \ref{prop:blowup}) implies that the Real Seiberg--Witten invariant of $(X' , \mathfrak{s}')$ is non-zero. Replacing $\kappa$ by $-\kappa$ if necessary, we can assume that $\langle c(\kappa) , [E] \rangle = \pm 1$ has the same sign as $\langle c(\mathfrak{s}) , [\Sigma] \rangle$. It follows that $|\langle c(\mathfrak{s}') , [\Sigma'] \rangle | = |\langle c(\mathfrak{s}) , [\Sigma] \rangle | + 1$ and hence
\[
| \langle c(\mathfrak{s}') , [\Sigma'] \rangle | + [\Sigma']^2 = | \langle c(\mathfrak{s}) , [\Sigma] \rangle | + [\Sigma]^2.
\]
Note also that if $\sigma$ acts non-freely on $\sigma$, then the induced Real structure on $X \# \overline{\mathbb{CP}^2}$ also acts non-freely on $\Sigma'$. Hence we can iterate this procedure until we obtain a Real surface of self-intersection zero.

In the case that $g > 0$, $[\Sigma]^2 > 0$ and $\sigma$ acts freely on $\Sigma$, instead of blowing up a fixed point, we will blow up a pair of points $x , \sigma(x)$ on $\Sigma$. The result will be a Real surface $\Sigma' \subset X \# 2 \overline{\mathbb{CP}^2}$ of the same genus as $\Sigma$ and with $[\Sigma']^2 = [\Sigma]^2 - 2$. The blowup formula (Proposition \ref{prop:blowup}) again implies that the Real Seiberg--Witten invariant of $(X' , \mathfrak{s}')$ is non-zero. We iterate this process until we obtain a surface of self-intersection zero. Since the self-intersection decreases by $2$ at each step, for this process to work it will be necessary that $[\Sigma]^2$ is even. But this is immediate from Proposition \ref{prop:deg2}.

Now we consider the case that $g=0$ and $[\Sigma]^2 > 0$. As in the $g>0$ case, we can blow up to get a $4$-manifold $X'$ and a Real surface $\Sigma'$ with $[\Sigma']^2 = 0$. Moreover, we can easily arrange that $\langle c(\mathfrak{s}') , [\Sigma]' \rangle \neq 0$. Indeed, each time we blow up the value of $\langle c(\mathfrak{s}') , [\Sigma'] \rangle$ changes by $+1$ or $-1$ depending on which spin$^c$-structure we put on the $\overline{\mathbb{CP}^2}$ summand. Since we are free to choose either spin$^c$-structure, we can always arrange that $\langle c(\mathfrak{s}') , [\Sigma'] \rangle \neq 0$. By the same neck-stretching argument as in the $g>0$ case, we get a solution to the Seiberg--Witten equations on $Y$, the unit circle bundle in the normal bundle of $\Sigma'$. However since $g=0$ all solutions are reducible and so $e = deg(E) = (1/2)deg(K_{\Sigma'}) = -1$ and hence $|\langle c(\mathfrak{s}') , [\Sigma'] \rangle | = 2(e+1) = 0$, a contradiction.

Lastly, suppose that $g=0$, $\sigma$ acts non-freely on $X$, $[\Sigma]^2 = 0$ and $[\Sigma]$ is non-torsion. We use an argument adapted from \cite[Lemma 5.1]{fs0}. Let $N_\Sigma$ denote the normal bundle of $\Sigma$. Since $[\Sigma]^2 = 0$, $N_\Sigma$ is trivial as an ordinary vector bundle. Furthermore, since $g=0$, the forgetful map $H^2_{\mathbb{Z}_2}( \Sigma ; \mathbb{Z}_-) \to H^2(\Sigma ; \mathbb{Z})$ is easily seen to be injective, hence $N_\Sigma$ is also trivial as a Real vector bundle. Hence $\Sigma$ has a tubular neighbourhood of the form $U = D \times \Sigma$ where $D$ is the (open) unit disc in $\mathbb{C}$ and $\sigma(x,y) = ( \overline{x} , \sigma|_\Sigma(y) )$. For any integer $n > 0$, take $n$ points $x_1, \dots , x_n \in D$ such that $x_1, \dots , x_n, \overline{x_1} , \dots, \overline{x_n}$ are distinct. This gives $2n$ disjoint copies of $\Sigma$, namely $\Sigma_i = \{x_i \} \times \Sigma$, $\Sigma_{n+i} = \{ \overline{x_i} \} \times \Sigma$, $1 \le i \le n$. For each $i = 1, \dots ,n$, $\sigma$ sends $\Sigma_i$ to $\Sigma_{n+i}$ orientation reversingly. Now let $p \in X$ be a fixed point of $\sigma$. Clearly $p \notin \Sigma_j$ for any $j$. Consider the blowup of $X$ at $p$. More precisely, let $\tau$ be the involution on $\overline{\mathbb{CP}^2}$ given by complex conjugation. Let $[x,y,z]$ be homogeneous coordinates on $\overline{\mathbb{CP}^2}$ so that $\tau[x,y,z] = [\overline{x},\overline{y},\overline{z}]$. Let $E \subset \overline{\mathbb{CP}^2}$ be the Real line given by $z=0$. By the blowup of $X$ at $p$, we mean the equivariant connected sum of $(X,\sigma)$ with $(\overline{\mathbb{CP}^2} , \tau)$ where the connected sum takes place at $p \in X$ and $[0,0,1] \in \overline{\mathbb{CP}^2}$. Since $[0,0,1] \notin E$, we can regard $E$ as an embedded sphere in $X' = X \# \overline{\mathbb{CP}^2}$. Furthermore $E$ is Real with respect to the involution $\sigma' = \sigma \# \tau$ on $X'$. Similarly, since $p \notin \Sigma_j$ for all $j$, we can regard $\Sigma_1, \dots , \Sigma_{2n}$ as embedded spheres in $X'$ and we have that $\sigma'$ sends $\Sigma'_i$ to $\Sigma'_{n+i}$ orientation reversingly for $i = 1, \dots , n$.

Let $\mathfrak{s}' = \mathfrak{s} \# \kappa$, where $\kappa$ is a Real spin$^c$-structure on $\overline{\mathbb{CP}^2}$ with $c(\kappa)^2 = -1$. Then $\mathfrak{s}'$ is a Real spin$^c$-structure on $X'$ and $SW_R(X',\mathfrak{s}') \neq 0$. Let $W' = X'/\sigma'$ be the quotient of $X'$ by $\sigma'$ (note that the existence of a Real spin$^c$-structure $\mathfrak{s}'$ on $X'$ implies that each component of the fixed point set of $\sigma'$ is an embedded surface and hence $W'$ admits the structure of a smooth $4$-manifold). Then $\rho : X' \to W'$ is a branched double cover. For $1 \le i \le n$, let $S_i = \rho(\Sigma_i)$ be the image of $\Sigma_i$ and let $F = \rho(E)$. Observe that since $\sigma$ acts on $E \cong \mathbb{CP}^1$ as complex conjugation, $F = E/\sigma'$ is an embedded disc in $W'$. Now for the connected sum $F_n$ of $F$ with $S_1, \dots , S_n$ along $n$ disjoint paths joining $n$ distinct points of $F$ to points on $S_1, \dots , S_n$. We can furthermore assume that the $n$ paths are chosen to be disjoint from the branch locus of $\rho : X' \to W'$. We have that $F_n$ is an embedded surface with genus $0$ and a single boundary component. Let $E_n$ be the preimage of $F_n$ in $W'$. Clearly $F_n$ is given by the connected sum of $E$ with $\Sigma_1, \dots , \Sigma_{2n}$ along $2n$ paths joining $2n$ points of $E$ to the surfaces $\Sigma_1, \dots , \Sigma_{2n}$. In particular $E_n$ is a closed embedded surface of genus $0$. Furthermore by construction $E_n$ is Real with respect to $\sigma'$ and $[E_n] = [E] + 2n[\Sigma]$. It follows that $[E_n]^2 = -1$. Thus $W'$ can be blown down along $E_n$. The result is a $4$-manifold $Z_n$ such that $W' = Z_n \# \overline{\mathbb{CP}^2}$. Moreover, there is an induced involution $\sigma_{Z_n}$ on $Z_n$ such that $\sigma' = \sigma_{Z_n} \# \tau$ and a Real spin$^c$-structure $\mathfrak{s}_n$ on $Z_n$ such that $\mathfrak{s}' = \mathfrak{s}_n \# \kappa$, where $c(\kappa) = [E_n]$. The blowup formula for the Real Seiberg--Witten invariants gives that $SW_R(Z_n , \mathfrak{s}_n) = SW_R(W' , \mathfrak{s}') \neq 0$. Applying the blowup formula to $\mathfrak{s}'_n = \mathfrak{s}_n \# (-\kappa)$, we instead get that $SW_R(W' , \mathfrak{s}_n \# (-\kappa)) \neq 0$. However we have that
\[
\mathfrak{s}' = \mathfrak{s}_n \# \kappa = [E_n] \otimes \mathfrak{s}'_n
\]
and hence
\[
c(\mathfrak{s}') = 2[E_n] + c(\mathfrak{s}'_n) = 2[E] + 4n[\Sigma] + c(\mathfrak{s}'_n).
\]
So
\[
c(\mathfrak{s}'_n) = c' -2[E] - 4n[\Sigma]
\]
where $c' = c(\mathfrak{s}')$. Since $[\Sigma]$ is assumed to be non-torsion, the classes $c(\mathfrak{s}'_n)$ represent infinitely many different classes in $H^2(W' ; \mathbb{Z})$ as $n$ ranges over the integers. However this is impossible as the compactness properties of the Seiberg--Witten equations implies that $SW_R(W' , \mathfrak{t} )$ can be non-zero for only finite many real spin$^c$-structures $\mathfrak{t}$. This is a contradiction and so $X$ does not admit a Real genus $0$ embedded surface $\Sigma$ with $[\Sigma]^2 = 0$ and $[\Sigma]$ non-torsion.
\end{proof}

%%%%%%%%%%%%%%%%%%%%%%%%%%%%%%%%%%%%%%%%%%%%%%%%%%%%%%%%%%%
\section{The adjunction inequality for negative self-intersection}\label{sec:adj2}

In this section we prove an adjunction inequality for Real embedded surfaces where the self-intersection is allowed to be negative. This can be compared with the adjunction inequality for ordinary embedded surfaces of negative self-intersection proven by Ozsv\'ath--Szab\'o \cite{os}. However our proof is very different and makes crucial use of the connected sum formula for Real Seiberg--Witten invariants.

\begin{theorem}\label{thm:adjunctionr2}
Let $X$ be a compact, oriented, smooth $4$-manifold and $\sigma$ a Real structure on $X$. Assume that $b_+(X)^{-\sigma} > 1$. Let $\mathfrak{s}$ be a Real spin$^c$-structure such that $SW_{R,\mathbb{Z}}(X , \mathfrak{s})$ is defined and non-zero. Let $\Sigma \subset X$ be an embedded Real surface of genus $g$. Assume that $\sigma$ does not act freely on $\Sigma$. Then
\[
2g \ge | \langle c(\mathfrak{s}) , [\Sigma] \rangle | + [\Sigma]^2.
\]
\end{theorem}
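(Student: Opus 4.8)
The plan is to reduce the arbitrary–self-intersection case to the non-negative case already established in Theorem~\ref{thm:adjunctionr}, by gluing on an auxiliary Real $4$-manifold whose Real Seiberg--Witten invariant is non-zero and which contributes a Real surface of large positive self-intersection. Since $\sigma$ does not act freely on $\Sigma$, there is a fixed point $p \in \Sigma$ lying on some surface component $S$ of $X^\sigma$. The idea is to form an equivariant connected sum $X' = X \# Z$ at $p$, where $(Z,\tau)$ is a Real $4$-manifold carrying a Real surface $T$ of positive self-intersection through a fixed point $q$, with a Real spin$^c$-structure $\mathfrak{t}$ for which $SW_{R,\mathbb{Z}}(Z,\mathfrak{t}) \neq 0$. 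Then $\Sigma' = \Sigma \# T$ is a Real embedded surface in $X'$ through a fixed point, with $[\Sigma']^2 = [\Sigma]^2 + [T]^2$ and genus $g' = g + g(T)$, and $\langle c(\mathfrak{s}'), [\Sigma'] \rangle = \langle c(\mathfrak{s}), [\Sigma]\rangle + \langle c(\mathfrak{t}), [T]\rangle$ for $\mathfrak{s}' = \mathfrak{s} \# \mathfrak{t}$. By the connected sum formula (Proposition~\ref{prop:csf}), $SW_{R,\mathbb{Z}}(X', \mathfrak{s}') \neq 0$, provided the numerical hypotheses of that proposition hold for both summands; one may need to further stabilize $X$ (for instance by equivariant connected sum with copies of $S^2\times S^2$ with the swap involution, or with $\overline{\mathbb{CP}^2}$ blowups, which preserve non-vanishing by Proposition~\ref{prop:blowup}) to arrange the parity and $b_1^{-\sigma}=0$ conditions while keeping $b_+^{-\sigma} > 1$. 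Once $[\Sigma']^2 \ge 0$, Theorem~\ref{thm:adjunctionr}(1) (assuming $g' > 0$, which holds as $g(T) \ge 1$ can be arranged) gives
\[
2g' - 2 \ge |\langle c(\mathfrak{s}'), [\Sigma']\rangle| + [\Sigma']^2.
\]

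Substituting and using the triangle inequality $|\langle c(\mathfrak{s}), [\Sigma]\rangle| \le |\langle c(\mathfrak{s}'), [\Sigma']\rangle| + |\langle c(\mathfrak{t}), [T]\rangle|$, this becomes
\[
2g + 2g(T) - 2 \ge |\langle c(\mathfrak{s}), [\Sigma]\rangle| - |\langle c(\mathfrak{t}),[T]\rangle| + [\Sigma]^2 + [T]^2,
\]
so
\[
2g \ge |\langle c(\mathfrak{s}), [\Sigma]\rangle| + [\Sigma]^2 + \bigl( |\langle c(\mathfrak{t}),[T]\rangle| + [T]^2 - 2g(T) + 2 \bigr).
\]
Thus it suffices to produce an auxiliary piece $(Z,\tau,\mathfrak{t},T)$ with $SW_{R,\mathbb{Z}}(Z,\mathfrak{t}) \neq 0$, with $T$ a Real embedded surface through a fixed point, and with the \emph{sharp} equality $|\langle c(\mathfrak{t}),[T]\rangle| + [T]^2 = 2g(T) - 2$, i.e. a case where Theorem~\ref{thm:adjunctionr} is sharp. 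The natural candidate is a K3-type or elliptic-surface building block with a real structure: take $Z$ an equivariant connected sum of $K3$'s (with a suitable anti-holomorphic involution), for which the connected sum formula guarantees a non-vanishing integer Real invariant, and let $T$ be the complexification of a real curve, or a section/fiber class, realizing equality in the adjunction inequality. One must verify $T$ can be taken to pass through a fixed point of $\tau$ (using the handle-attachment operations of Section~\ref{sec:realsurface} if necessary, which change genus and $n$ but may spoil sharpness — so instead one should locate a fixed point already on $T$, using that $\tau$ acts non-freely).

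The main obstacle I expect is the bookkeeping in the last step: one needs an auxiliary Real $4$-manifold that simultaneously (a) has a non-zero \emph{integer} Real Seiberg--Witten invariant, (b) satisfies the numerical side conditions so that Proposition~\ref{prop:csf} applies when glued to (a possibly stabilized) $X$, and (c) contains a Real surface through a fixed point achieving equality in the non-negative adjunction inequality. Showing (c) — that the model surface passes through a fixed point \emph{and} is adjunction-sharp — is the delicate point, since the handle-attachment trick that forces a surface to meet $X^\sigma$ increases the genus and thereby destroys sharpness. The resolution should be to choose the anti-holomorphic involution on the model (e.g. on $K3$ or on an elliptic surface $E(n)$) so that its fixed locus already meets the relevant adjunction-sharp surface class; real algebraic geometry provides such examples (a real elliptic surface whose real locus contains a real section or real fiber). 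A secondary obstacle is ensuring that the stabilization of $X$ needed to meet the hypotheses of Proposition~\ref{prop:csf} does not change the quantities $g$, $[\Sigma]^2$, $\langle c(\mathfrak{s}),[\Sigma]\rangle$ — this is fine if we stabilize away from $\Sigma$, i.e. perform the equivariant connected sums at fixed points not lying on $\Sigma$, which is possible whenever $X^\sigma$ has a component not entirely contained in $\Sigma$, or more robustly by first doing the $Z$-connected-sum and then stabilizing the result.
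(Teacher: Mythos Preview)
Your overall strategy is exactly the paper's: form an equivariant connected sum with an auxiliary Real $4$-manifold carrying an adjunction-sharp Real curve through a fixed point, then apply Theorem~\ref{thm:adjunctionr} to the glued surface and subtract off the auxiliary contribution. However, there is a genuine algebraic slip in your reduction that would make the argument fail as written.

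When you pass from $|\langle c(\mathfrak{s}'),[\Sigma']\rangle|$ to $|\langle c(\mathfrak{s}),[\Sigma]\rangle|$ via the reverse triangle inequality, the parenthetical term that must be nonnegative is
\[
[T]^2 \; - \; |\langle c(\mathfrak{t}),[T]\rangle| \; - \; 2g(T) + 2,
\]
with a \emph{minus} sign in front of $|\langle c(\mathfrak{t}),[T]\rangle|$, not a plus. But if $SW_{R,\mathbb{Z}}(Z,\mathfrak{t})\neq 0$ then Theorem~\ref{thm:adjunctionr} applied inside $Z$ gives $2g(T)-2 \ge [T]^2 + |\langle c(\mathfrak{t}),[T]\rangle|$, so the corrected parenthetical is $\le -2|\langle c(\mathfrak{t}),[T]\rangle| \le 0$, with equality forcing $\langle c(\mathfrak{t}),[T]\rangle = 0$. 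Thus ``adjunction-sharp'' alone is not sufficient once you use the triangle inequality; you would additionally need $c(\mathfrak{t})$ to pair trivially with $[T]$.

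The paper avoids this loss entirely by \emph{aligning signs} rather than invoking the triangle inequality: orient $\Sigma$ so that $\langle c(\mathfrak{s}),[\Sigma]\rangle \ge 0$, and arrange $\langle c(\mathfrak{t}),[T]\rangle \ge 0$ as well. Then $|\langle c(\mathfrak{s}'),[\Sigma']\rangle| = |\langle c(\mathfrak{s}),[\Sigma]\rangle| + |\langle c(\mathfrak{t}),[T]\rangle|$ exactly, and adjunction-sharpness of $T$ is precisely what is needed. Concretely, the paper takes $(Z,\tau)=(M,\sigma_M)$ the degree-$d$ Fermat-type hypersurface $\{-x_0^d+x_1^d+x_2^d+x_3^d=0\}\subset\mathbb{CP}^3$ with complex conjugation, for any $d\ge 4$ with $d+[\Sigma]^2>0$; $\mathfrak{t}=\mathfrak{s}_M$ the canonical spin$^c$-structure; and $T=\Sigma_M=\{x_3=0\}$. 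Then $\Sigma_M$ is a smooth real plane curve of degree $d$, hence of genus $(d-1)(d-2)/2$, with $[\Sigma_M]^2=d$, $\langle c(\mathfrak{s}_M),[\Sigma_M]\rangle = d(d-4)\ge 0$, and real points (so a fixed point of $\sigma_M$ lies on $\Sigma_M$). The adjunction equality $2g_M-2 = d(d-4)+d$ holds on the nose, and after connect-summing the inequality collapses to $2g \ge |\langle c(\mathfrak{s}),[\Sigma]\rangle| + [\Sigma]^2$.

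Finally, your stabilization worry is unnecessary: the hypothesis that $SW_{R,\mathbb{Z}}(X,\mathfrak{s})$ is \emph{defined} already encodes the conditions $(c(\mathfrak{s})^2-\sigma(X))/8=b_+(X)^{-\sigma}$ even and $b_1(X)^{-\sigma}=0$, and the same holds for $(M,\mathfrak{s}_M)$, so Proposition~\ref{prop:csf} applies directly with no further modification of $X$.
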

\begin{proof}

Choose a $d \ge 4$ such that $d + [\Sigma]^2 > 0$. Let $M \subset \mathbb{CP}^3$ be the non-singular degree $d$ hypersurface given by $-x_0^d + x_1^d + x_2^d + x_3^d = 0$. Complex conjugation sends $M$ to itself and hence defines an anti-holomorphic involution $\sigma_M$ on $M$. Let $\mathfrak{s}_M$ denote the canonical spin$^c$-structure on $M$. Then $\mathfrak{s}_M$ admits the structure of a Real spin$^c$-structure with respect to $\sigma_M$. Moreover $SW_{R , \mathbb{Z}}(M , \mathfrak{s}_M)$ is defined and non-zero (recall that for the canonical spin$^c$-structure of a K\"ahler surface, there is a unique solution to the Seiberg--Witten equations which is cut out transversally. This unique solution is easily seen to be Real and cut out transversally. Alternatively, one can use \cite[Theorem 1.7]{bar3}). Let $L = \mathcal{O}(1)|_M$. Since $\mathcal{O}(1) \to \mathbb{CP}^3$ is a Real line bundle with respect to complex conjugation, $L$ is a Real line bundle on $M$. Let $x = c_1(L)$. Then $x^2 = d$ because $M$ has degree $d$. The adjunction formula gives $K_M = \mathcal{O}(d-4)|_M = L^{d-4}$. Let $\Sigma_M \subset M$ be the real surface defined by $x_3 = 0$. Note that $x_3|_M$ is a Real section of $L$ and hence $\Sigma_M$ is a Real surface representing $x$. Hence $\langle c(\mathfrak{s}_M) , [\Sigma_M] \rangle = (d-4)x^2 = d(d-4)$. The genus of $\Sigma_M$ is given by $g_M = (d-1)(d-2)/2$, since $\Sigma_M$ can be viewed as the curve in $\mathbb{CP}^2$ defined by $x_0^d = x_1^d + x_2^d$. Note also that this equation has non-zero real solutions, so there exists fixed points of $\sigma_M|_{\Sigma_M}$.

Now consider the equivariant connected sum $(X' , \sigma') = (X , \sigma) \# (M , \sigma_M)$, where we connect sum a fixed point of $\Sigma$ to a fixed point of $\Sigma_M$. Let $\mathfrak{s}' = \mathfrak{s} \# \mathfrak{s}_M$. The connected sum formula Proposition \ref{prop:csf} implies that $SW_{R,\mathbb{Z}}(X' , \mathfrak{s}')$ is non-zero. Orient $\Sigma$ such that $\langle c(\mathfrak{s}) , [\Sigma] \rangle \ge 0$. Connect summing $\Sigma$ and $\Sigma_M$, we obtain a Real surface $\Sigma' = \Sigma \# \Sigma_M$ of genus $g' = g+g_M$ in $X'$. Since $[\Sigma']^2 = [\Sigma]^2 + d > 0$, the adjunction inequality Theorem \ref{thm:adjunctionr} gives
\[
2g' - 2 \ge | \langle c(\mathfrak{s}') , [\Sigma'] \rangle | + [\Sigma']^2
\]
Since $g' = g + (d-1)(d-2)/2$, $|\langle c(\mathfrak{s}') , [\Sigma'] \rangle | = | \langle c(\mathfrak{s}) , [\Sigma] \rangle + d(d-4) | = |\langle c(\mathfrak{s}) , [\Sigma] \rangle | + d(d-4)$ and $[\Sigma']^2 = [\Sigma]^2 + d$, we get
\[
2g + (d-1)(d-2) - 2 \ge |\langle c(\mathfrak{s}) , [\Sigma] \rangle | + d(d-4) + [\Sigma]^2 + d
\]
which simplifies to $2g \ge | \langle c(\mathfrak{s}) , [\Sigma] \rangle | + [\Sigma]^2$.
\end{proof}

\begin{remark}
In the setting of Theorem \ref{thm:adjunctionr2}, if $\sigma$ acts freely on $\Sigma$, but not freely on $X$, then as explained in Section \ref{sec:realsurface}, we can attach a handle to $\Sigma$ which increases $g$ by $1$ and adds a circle to the fixed point set of $\sigma|_\Sigma$. So without the assumption that $\sigma$ acts non-freely on $\Sigma$, we still get a slightly weaker inequality
\[
2g+2 \ge | \langle c(\mathfrak{s}) , [\Sigma] \rangle | + [\Sigma]^2.
\]
\end{remark}

%%%%%%%%%%%%%%%%%%%%%%%%%%%%%%%%%%%%%%%%%%%%%%%%%%%%%%%%%
\section{Examples}\label{sec:ex}

In this section we consider examples of $4$-manifolds with Real structure to which we can apply the Real adjunction inequality. We seek Real $4$-manifolds $(X , \sigma)$ with a non-vanishing Real Seiberg--Witten invariant and preferrably with vanishing ordinary Seiberg--Witten invariant, so that the usual adjunction inequality from Seiberg--Witten theory does not apply. As discussed in Section \ref{sec:rsw}, many examples can be obtained by taking equivariant sums. It follows from Proposition \ref{prop:csf} that if $(X_1 , \sigma_1), (X_2 , \sigma_1)$ are Real $4$-manifolds with $(c(\mathfrak{s}_i)^2 - \sigma(X_i))/8 = b_+(X_i)^{-\sigma}$ even and positive and $b_1(X_i)^{-\sigma} = 0$, then if $X_1,X_2$ have a non-vanishing Real integral Seiberg--Witten invariant then so does $(X_1 \# X_2 , \sigma_1 \# \sigma_2)$.

Following \cite{bar3}, we introduce the notion of an {\em admissible pair} $(X, \sigma)$. Admissible pairs are the building blocks from which many pairs with non-vanishing Real Seiberg--Witten invariant can be constructed by equivariant connected sums.

\begin{definition}
Let $X$ be a compact, oriented, smooth $4$-manifold and $\sigma$ an orientation preserving smooth involution on $X$. We will say the pair $(X , \sigma)$ is {\em admissible} if $b_1(X)^{-\sigma} = 0$, $\sigma$ has a non-isolated fixed point and $(X,\sigma)$ satisfies one of the following conditions:
\begin{itemize}
\item[(1)]{$X$ admits a symplectic structure with $\sigma^*(\omega) = -\omega$, and $b_+(X) - b_1(X) = 3 \; ({\rm mod} \; 4)$.}
\item[(2)]{$X$ is spin, $b_+(X)^{-\sigma} = \sigma(X) = 0$ and $H_1(X ; \mathbb{Z})$ has no $2$-torsion.}
\item[(3)]{$X$ has a spin$^c$-structure $\mathfrak{s}$ with $\sigma^*(\mathfrak{s}) = -\mathfrak{s}$, $SW(X,\mathfrak{s})$ is odd, $b_+(X) - b_1(X) = 3 \; ({\rm mod} \; 4)$, and
\[
\frac{ c(\mathfrak{s})^2 - \sigma(X) }{8} = \frac{ b_+(X) - b_1(X) + 1}{2} = b_+(X)^{-\sigma}.
\]
}
\item[(4)]{$X = \overline{\mathbb{CP}}^2$ with an involution such that $b_-(X)^{-\sigma} = 0$.}
\item[(5)]{$X = S^4 \# N \# N$ and $\sigma$ is obtained from the involution $diag(1,1,1,-1,-1)$ on $S^4$ by attaching two copies of $N$ which are exchanged by the involution, $N$ is negative definite, $b_1(N) = 0$ and there is a spin$^c$-structure on $N$ with $c(\mathfrak{s})^2 = -b_2(N)$.}
\end{itemize}

\end{definition}

Then we have the following result:

\begin{theorem}\label{thm:sumnonzero}
Let $(X,\sigma) = (X_1,\sigma_1) \# \cdots \# (X_k , \sigma_k)$ be an equivariant connected sum where $(X_i , \sigma_i)$ is admissible for each $i$. Suppose also that $b_+(X)^{-\sigma} > 1$. Then there is a Real spin$^c$-structure $\mathfrak{s}$ on $X$ such that $SW_{R,\mathbb{Z}}(X , \mathfrak{s})$ is defined and non-zero. If $\Sigma \subset X$ is a Real embedded surface in $X$ of genus $g$, $[\Sigma]^2 \ge 0$ and $[\Sigma]$ is non-torsion, then 
\[
g \ge \frac{1}{2}[\Sigma]^2 + 1.
\]
\end{theorem}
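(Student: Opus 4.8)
My plan is to prove the statement in two steps. First I would exhibit a Real spin$^c$-structure $\mathfrak{s}$ on $X$ for which $SW_{R,\mathbb{Z}}(X,\mathfrak{s})$ is defined and non-zero; this follows by combining the results of \cite{bar3} recalled in Section~\ref{sec:rsw}. Second I would feed this non-vanishing invariant into the non-negative self-intersection adjunction inequality (Theorem~\ref{thm:adjunctionr}) to obtain the genus bound; this is the step that delivers the stated conclusion.

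For the first step, each admissible summand $(X_i,\sigma_i)$ comes with a distinguished Real spin$^c$-structure $\mathfrak{s}_i$: the canonical spin$^c$-structure in the symplectic case~(1), the spin structure in case~(2), the given spin$^c$-structure with a Real structure furnished by Proposition~\ref{prop:loc} in case~(3), and the standard choices on $\overline{\mathbb{CP}}^2$, respectively $S^4 \# N \# N$, in cases~(4) and~(5). The admissibility conditions are arranged precisely so that for these choices the numerical hypotheses $(c(\mathfrak{s}_i)^2 - \sigma(X_i))/8 = b_+(X_i)^{-\sigma}$, $b_+(X_i)^{-\sigma}$ even, and $b_1(X_i)^{-\sigma} = 0$ hold, guaranteeing that $SW_{R,\mathbb{Z}}$ is defined for each building block, and so that each $|SW_{R,\mathbb{Z}}(X_i,\mathfrak{s}_i)|$ is non-zero --- in cases~(1)--(3) because the relevant Real Seiberg--Witten moduli space is a non-empty, transversally cut out, zero-dimensional manifold (using Proposition~\ref{prop:loc} together with \cite{bar3}), and in cases~(4),~(5) because the relevant Bauer--Furuta map has degree $\pm 1$. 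Setting $\mathfrak{s} = \mathfrak{s}_1 \# \cdots \# \mathfrak{s}_k$, the three numerical conditions are additive under equivariant connected sum and so pass to $(X,\mathfrak{s})$; then applying the connected sum formula (Proposition~\ref{prop:csf}) to the summands with $b_+(X_i)^{-\sigma} > 0$, together with the blowup formula (Proposition~\ref{prop:blowup}) and its extension in \cite{bar3} to absorb the negative-definite summands of types~(4) and~(5), one obtains $|SW_{R,\mathbb{Z}}(X,\mathfrak{s})| = 2^{\ell}\prod_i |SW_{R,\mathbb{Z}}(X_i,\mathfrak{s}_i)| \neq 0$ for some $\ell \ge 0$; since $b_+(X)^{-\sigma} > 1$, this is a genuine, chamber-independent invariant.

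For the second step, I first observe that every admissible pair has a non-isolated fixed point, so the fixed point set of $\sigma$ is non-empty and $\sigma$ does not act freely on $X$. Now let $\Sigma \subset X$ be a Real embedded surface with $[\Sigma]^2 \ge 0$ and $[\Sigma]$ non-torsion. If $g = 0$ then Theorem~\ref{thm:adjunctionr}(2), applicable since $\sigma$ is not free and $[\Sigma]$ is non-torsion, forces $[\Sigma]^2 < 0$, contradicting $[\Sigma]^2 \ge 0$; hence $g \ge 1$. Theorem~\ref{thm:adjunctionr}(1) then applies and gives $2g - 2 \ge |\langle c(\mathfrak{s}), [\Sigma] \rangle| + [\Sigma]^2 \ge [\Sigma]^2$, which rearranges to $g \ge \frac{1}{2}[\Sigma]^2 + 1$.

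The main obstacle is entirely in the first step: one must check that the numerical conditions making $SW_{R,\mathbb{Z}}$ defined actually hold for each of the five types of admissible building block and survive equivariant connected sum, and --- because the connected sum formula of Proposition~\ref{prop:csf} requires each of its two summands to have positive $b_+^{-\sigma}$ --- one must dispose of the negative-definite building blocks of types~(4) and~(5) through a blowup-type argument rather than the connected sum formula itself. Once the non-vanishing invariant is available, the adjunction step is immediate.
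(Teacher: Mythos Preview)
Your proposal is correct and follows the same two-step architecture as the paper: first produce a Real spin$^c$-structure with non-vanishing integral Real Seiberg--Witten invariant, then apply Theorem~\ref{thm:adjunctionr}. The second step is identical to the paper's argument.

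The organizational difference lies in the first step. The paper does not attempt to verify the numerical hypotheses of Proposition~\ref{prop:csf} type by type; instead it works with the auxiliary invariant $deg_R$ from \cite{bar3}. By \cite[Proposition~11.2]{bar3} every admissible pair has a Real spin$^c$-structure with $deg_R$ defined and non-zero, and the more general connected sum formula \cite[Theorem~1.11]{bar3} for $deg_R$ applies uniformly to all admissible summands, including those with $b_+^{-\sigma}=0$ (types (2), (4), (5)); only at the end does the paper convert $deg_R(X,\mathfrak{s})$ to $SW_{R,\mathbb{Z}}(X,\mathfrak{s})$ via \cite[Proposition~1.9]{bar3}. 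Your route instead tries to stay with $SW_{R,\mathbb{Z}}$ and Proposition~\ref{prop:csf} throughout, which forces you to treat the $b_+^{-\sigma}=0$ summands separately by blowup-type arguments (and note that type~(2) also has $b_+^{-\sigma}=0$, not just types~(4) and~(5), and that the involution in type~(4) is not the complex conjugation involution appearing in Proposition~\ref{prop:blowup}). This can be made to work, since the relevant extensions are indeed in \cite{bar3}, but it is more piecemeal than the paper's uniform treatment via $deg_R$.
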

\begin{proof}
By \cite[Proposition 11.2]{bar3}, each summand $(X_i , \sigma_i)$ has a Real spin$^c$-structure $\mathfrak{s}_i$ such that $deg_R(X_i , \mathfrak{s}_i)$ is defined and non-zero (for the definition of $deg_R(X_i , \mathfrak{s}_i)$, see \cite[\textsection 5]{bar3}. Then the connected sum formula \cite[Theorem 1.11]{bar3} implies that $deg_R(X , \mathfrak{s})$ is defined and non-zero, where $\mathfrak{s} = \mathfrak{s}_1 \# \cdots \# \mathfrak{s}_k$. Since $b_+(X)^{-\sigma} > 1$, we have that $SW_{R,\mathbb{Z}}(X , \mathfrak{s}) = (1/2)deg_R(X,\mathfrak{s})$ by \cite[Proposition 1.9]{bar3} and hence $SW_{R,\mathbb{Z}}(X , \mathfrak{s})$ is non-zero. Therefore, the adjunction inequality Theorem \ref{thm:adjunctionr} gives that $g \neq 0$ and $2g-2 \ge |\langle c(\mathfrak{s}) , [\Sigma] \rangle | + [\Sigma]^2$. In particular, since $|\langle c(\mathfrak{s}) , [\Sigma] \rangle | \ge 0$, we get $g \ge (1/2)[\Sigma]^2 + 1$.
\end{proof}

\begin{remark}
If $(X,\sigma) = (X_1,\sigma_1) \# \cdots \# (X_k , \sigma_k)$ is an equivariant connected sum as in Theorem \ref{thm:sumnonzero} and if $b_+(X_i) > 0$ for at least two $i$, then the ordinary Seiberg--Witten invariants of $X$ vanish for all spin$^c$-structures.
\end{remark}

\begin{proposition}\label{prop:invol1}
Let $u \ge 4$, $v \ge u+17$, then $X = u \mathbb{CP}^2 \# v \overline{\mathbb{CP}^2}$ admits a Real structure $\sigma$ with $b_+(X)^{-\sigma} > 1$ and a Real spin$^c$-structure for which the Real Seiberg--Witten invariant is non-zero.
\end{proposition}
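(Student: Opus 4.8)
The plan is to realise $X = u\mathbb{CP}^2 \# v\overline{\mathbb{CP}^2}$ as an equivariant connected sum of admissible pairs and then invoke Theorem \ref{thm:sumnonzero}. The building blocks available to us are: copies of $\overline{\mathbb{CP}^2}$ with the standard conjugation involution (type (4) admissibility, since $\tau$ acts as $-1$ on $H^2$ so $b_-^{-\tau}=0$); some symplectic piece with $\sigma^*\omega = -\omega$ to supply positive $b_+^{-\sigma}$ (type (1) or (3)); and the building block $S^4 \# N \# N$ of type (5), which is what lets us absorb pairs of $\mathbb{CP}^2$-summands that are swapped by the involution. The key numerical point is that the involution we build will act as $-1$ on most of $H^+$, so that $b_+(X)^{-\sigma}$ is large (in particular $>1$), while the admissibility hypotheses at each summand are satisfied.

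First I would handle the $\mathbb{CP}^2$ summands. On $S^4$ take the involution $\mathrm{diag}(1,1,1,-1,-1)$, whose fixed set is a $2$-sphere; attaching two copies of a negative-definite $N$ exchanged by the involution gives a type (5) admissible pair. Choosing $N$ appropriately — e.g. $N$ a connected sum of $\overline{\mathbb{CP}^2}$'s, which is negative definite with $b_1=0$ and carries a spin$^c$-structure with $c(\mathfrak{s})^2 = -b_2(N)$ — we can arrange that $S^4 \# N \# N$ has the homology of some $\#\overline{\mathbb{CP}^2}$'s together with a hyperbolic or even just negative contribution; more directly, the point of this block is that its $H^+$ is $0$ and it contributes the needed $\overline{\mathbb{CP}^2}$'s in pairs. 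However, $S^4\#N\#N$ with $N$ a sum of $\overline{\mathbb{CP}^2}$'s is negative definite, so it does not by itself produce $\mathbb{CP}^2$ summands. To get the $\mathbb{CP}^2$'s I would instead use a symplectic admissible piece of type (1) or (3): a minimal symplectic $4$-manifold (or an elliptic surface) $Y$ with an anti-symplectic involution, $b_1(Y)^{-\sigma}=0$, and $b_+(Y) - b_1(Y) \equiv 3 \pmod 4$, whose intersection form after blowing up and summing contributes the required positive part. The constraints $u\ge 4$ and $v \ge u+17$ are exactly what one needs for such a symplectic piece to exist with the right $b_+$, $b_-$ and signature to match $u\mathbb{CP}^2 \# v\overline{\mathbb{CP}^2}$ once the remaining $\overline{\mathbb{CP}^2}$'s are supplied by copies of $(\overline{\mathbb{CP}^2},\tau)$ and by type (5) blocks.

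Concretely, the bookkeeping goes as follows. Decompose $X = Y \# (\text{copies of } \overline{\mathbb{CP}^2}) \# (\text{type (5) blocks})$ where $Y$ is a suitable admissible symplectic (or Seiberg--Witten-nonzero) pair, chosen so that $b_+(Y) \le u$, and the deficit $u - b_+(Y)$ of positive classes is made up by pairing $\mathbb{CP}^2$'s into type (5) blocks, while the remaining $\overline{\mathbb{CP}^2}$'s — both those needed to balance $Y$'s $b_-$, those inside the type (5) blocks, and any extra — are taken with the standard conjugation. Each summand is admissible by construction, and since the conjugation $\tau$ on $\overline{\mathbb{CP}^2}$ and the anti-symplectic involution on $Y$ both act as $-1$ on their positive cohomology (and the type (5) blocks have $b_+ = 0$), we get $b_+(X)^{-\sigma} = b_+(X) = u > 1$ (using $u \ge 4$). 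Theorem \ref{thm:sumnonzero} then produces a Real spin$^c$-structure $\mathfrak{s}$ with $SW_{R,\mathbb{Z}}(X,\mathfrak{s})$ defined and non-zero.

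The main obstacle I expect is producing the symplectic (or $SW$-nonzero) admissible summand $Y$ with precisely controlled $b_+$, $b_-$, $b_1 = 0$, signature, the congruence $b_+ - b_1 \equiv 3 \pmod 4$, the equality $(c(\mathfrak{s})^2 - \sigma(X))/8 = (b_+ - b_1 + 1)/2 = b_+^{-\sigma}$ in case (3), and an anti-symplectic involution with a non-isolated fixed point and $b_1^{-\sigma} = 0$ — all while keeping $b_+(Y)$ small enough that the residual $\overline{\mathbb{CP}^2}$'s fit. This is exactly where the numerical hypotheses $u \ge 4$, $v \ge u + 17$ are consumed: the "$+17$" should be the sum of the $b_-$-contribution of $Y$ beyond $b_+(Y)$ together with the extra negative classes forced by the type (5) blocks and by making the signature come out right. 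Once a single such $Y$ is exhibited (for instance an elliptic surface $E(n)$ or a Dolgachev-type surface with a real structure, blown up equivariantly), the rest is routine connected-sum bookkeeping and an application of Theorem \ref{thm:sumnonzero}.
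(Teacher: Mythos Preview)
Your overall framework (build $X$ as an equivariant connected sum of admissible pairs and apply Theorem~\ref{thm:sumnonzero}) is the right one, but the actual construction is missing, and two of your building-block claims are wrong. First, complex conjugation on $\overline{\mathbb{CP}^2}$ acts as $-1$ on $H^2$, so $b_-(\overline{\mathbb{CP}^2})^{-\tau}=1$, not $0$; this pair is \emph{not} type~(4) admissible. Second, type~(5) blocks are negative definite, so they cannot supply $\mathbb{CP}^2$ summands, as you yourself notice; your plan to ``make up the deficit $u-b_+(Y)$'' this way does not work.

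The missing idea is that you should not try to produce the $\mathbb{CP}^2$ summands directly in the equivariant decomposition. Instead, use the standard dissolving diffeomorphisms $K3\#\overline{\mathbb{CP}^2}\cong 3\mathbb{CP}^2\#20\overline{\mathbb{CP}^2}$ and $(S^2\times S^2)\#\overline{\mathbb{CP}^2}\cong \mathbb{CP}^2\#2\overline{\mathbb{CP}^2}$. The paper takes
\[
X \;\cong\; K3 \;\#\; (u-3)(S^2\times S^2) \;\#\; (v-u-16)\,\overline{\mathbb{CP}^2},
\]
with the anti-symplectic (sextic double cover) involution on $K3$ (type~(1)), the swap involution on each $S^2\times S^2$ (type~(2)), and then blows up $v-u-16$ times at fixed points (handled by the blowup formula, Proposition~\ref{prop:blowup}). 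This is exactly what forces $u\ge 4$ (so $u-3\ge 1$) and $v\ge u+17$ (so $v-u-16\ge 1$, needed to make the manifold non-spin and hence dissolve). Note also that the swap on $S^2\times S^2$ has $b_+^{-\sigma}=0$ and the $K3$ involution has $b_+^{-\sigma}=2$, so the construction yields $b_+(X)^{-\sigma}=2$, not $u$ as you asserted.
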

\begin{proof}
We will take $X$ to be an equivariant connected sum $X = K3 \# a(S^2 \times S^2) \# b \overline{\mathbb{CP}^2}$, where $a = u-3, b = v - u - 16$. We give $K3$ any odd involution which acts non-freely (for instance the involution which realises $K3$ as the branched double cover of a non-singular sextic in $\mathbb{CP}^2$). For the $S^2 \times S^2$ summands, we use the involution which swaps the two $S^2$ factors. This gives an involution on $X_0 = K3 \# a(S^2 \times S^2)$. We then blow up $X_0$ $b$ times. All the summands are admissible, so the existence of a Real spin$^c$-structure on $X$ with non-zero Real Seiberg--Witten invariant follows from Theorem \ref{thm:sumnonzero}.
\end{proof}

In a similar fashion we can get involutions on connected sums of $K3$ and $S^2 \times S^2$ with non-trivial Real Seiberg--Witten invariant.

\begin{proposition}\label{prop:invol2}
Let $a \ge 0$, $b \ge 1$. Then $X = a (S^2 \times S^2) \# b K3$ admits a Real structure $\sigma$ with $b_+(X)^{-\sigma} > 1$ and a Real spin$^c$-structure $\mathfrak{s}$ for which the Real Seiberg--Witten invariant is non-zero. Moreover we can choose $\mathfrak{s}$ such that the underlying spin$^c$-structure of $\mathfrak{s}$ comes from the unique spin structure on $X$.
\end{proposition}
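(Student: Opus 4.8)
The plan is to realise $X = a(S^2 \times S^2) \# b K3$ as an equivariant connected sum of admissible pairs and then invoke Theorem \ref{thm:sumnonzero}. For the $S^2 \times S^2$ summands I would use the involution $\tau$ that swaps the two sphere factors. This involution is orientation preserving, has fixed point set the diagonal $S^2$ (a non-isolated fixed surface), satisfies $b_1^{-\tau} = 0$, and acts as $-1$ on the anti-diagonal class, so $b_+(S^2\times S^2)^{-\tau} = 1$. Since $S^2 \times S^2$ is spin with $\sigma(X) = 0$ and $H_1 = 0$, this realises condition (2) of the definition of admissible pair: $(S^2 \times S^2, \tau)$ is admissible.

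For each $K3$ summand I would choose an odd involution acting non-freely with non-isolated fixed point set, for instance the anti-symplectic involution realising $K3$ as the double cover of $\mathbb{CP}^2$ branched over a non-singular sextic curve; its fixed point set is that genus-$10$ curve, and one checks $b_1^{-\sigma} = 0$, $b_+(K3)^{-\sigma} > 1$, and that $K3$ admits a spin structure $\mathfrak{s}$ with $c(\mathfrak{s}) = 0$ and $\sigma^*(\mathfrak{s}) = \mathfrak{s}$ (equivalently $= -\mathfrak{s}$ since $c(\mathfrak{s}) = 0$). With $SW(K3,\mathfrak{s})$ odd this is condition (3) (or one uses that $K3$ is symplectic with $\sigma^*\omega = -\omega$ and $b_+ - b_1 = 3 \equiv 3 \pmod 4$, which is condition (1)); either way $(K3, \sigma)$ is admissible. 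Forming the equivariant connected sum of $a$ copies of $(S^2\times S^2,\tau)$ with $b$ copies of $(K3,\sigma)$ — which is possible since all fixed point sets contain non-isolated points — gives a Real structure on $X$. Since $b \ge 1$ contributes $b_+^{-\sigma} > 1$ from the first $K3$ factor alone (the connected sum adds up the $-\sigma$-invariant positive cohomology), the hypothesis $b_+(X)^{-\sigma} > 1$ of Theorem \ref{thm:sumnonzero} holds, and that theorem produces a Real spin$^c$-structure $\mathfrak{s}$ on $X$ with $SW_{R,\mathbb{Z}}(X,\mathfrak{s})$ defined and non-zero.

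For the last sentence I would track the underlying spin$^c$-structure through the connected sum construction. The connected sum spin$^c$-structure $\mathfrak{s} = \mathfrak{s}_1 \# \cdots \# \mathfrak{s}_k$ has $c(\mathfrak{s})$ the sum of the $c(\mathfrak{s}_i)$, and we may take each $\mathfrak{s}_i$ to be the (Real) spin$^c$-structure coming from the spin structure on the corresponding summand: $S^2 \times S^2$ and $K3$ are both spin, and the canonical/spin choices used in \cite[Proposition 11.2]{bar3} can be taken to be the spin ones (for $S^2\times S^2$ the spin structure is $\tau$-invariant with a Real refinement; for $K3$ the spin structure has $c = 0$ and is the one used above). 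Then $c(\mathfrak{s}) = 0$, and since a connected sum of spin manifolds is spin with the spin structure restricting to the given ones on the summands, $\mathfrak{s}$ is the spin$^c$-structure induced by the unique spin structure on $X$ (uniqueness because $H^1(X;\mathbb{Z}_2) = 0$). The main obstacle is the bookkeeping verifying that the admissibility hypotheses — particularly $b_1^{-\sigma} = 0$ and the numerical conditions in cases (1)/(2)/(3) — hold for the specific involutions chosen on $S^2\times S^2$ and $K3$, together with checking that the Real refinement of the spin$^c$-structure used in \cite[Proposition 11.2]{bar3} can indeed be chosen compatibly with the spin structure on each summand so that the connected sum invariant is computed for the spin class rather than some twist of it.
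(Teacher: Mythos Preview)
Your approach is essentially the same as the paper's: realise $X$ as an equivariant connected sum of $(S^2\times S^2,\tau_{\text{swap}})$ and $(K3,\sigma_{\text{sextic}})$ summands and apply the connected sum machinery, tracking that on each summand the Real spin$^c$-structure can be taken to be the one induced by the spin structure.

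One correction: for the swap involution $\tau$ on $S^2\times S^2$, the class $e+f$ (square $+2$) is $\tau$-invariant and $e-f$ (square $-2$) is $\tau$-anti-invariant, so $b_+(S^2\times S^2)^{-\tau}=0$, not $1$ as you wrote. This is in fact what you need for admissibility condition (2), so your internal inconsistency resolves in your favour. With this fix, the $S^2\times S^2$ summands contribute nothing to $b_+(X)^{-\sigma}$, and the whole burden falls on the $K3$ summands; for the sextic double-cover involution one has $b_+(K3)^{-\sigma}=2$, so $b\ge 1$ indeed gives $b_+(X)^{-\sigma}\ge 2>1$.

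The paper proceeds slightly more directly than via Theorem~\ref{thm:sumnonzero}: rather than appealing to the general admissibility machinery and then arguing separately that the spin$^c$-structure produced can be chosen spin, it simply observes that the canonical/spin spin$^c$-structure on $K3$ has non-zero $SW_{R,\mathbb{Z}}$ (unique transversal solution), quotes $|\deg_R(S^2\times S^2,\mathfrak{s}_{\text{spin}})|=1$ from \cite[Proposition~6.1(3)]{bar3}, and then applies the connected sum formula \cite[Theorem~9.3(1)]{bar3} directly to these specific spin$^c$-structures. Your route via Theorem~\ref{thm:sumnonzero} works too, but the bookkeeping you flag as ``the main obstacle'' is exactly what the paper sidesteps by citing these two explicit computations.
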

\begin{proof}
We will construct $\sigma$ as an equivariant connect sum of involutions on $K3$ and $S^2 \times S^2$. On each $K3$ summand we use any odd involution with non-empty fixed point set. On each $S^2 \times S^2$ summand we use the involution which swaps the two $S^2$ factors. On each summand of the connected sum, we have an odd involution which then gives an Real spin$^c$-structure $\mathfrak{s}$ on $X$ whose underlying spin$^c$-structure comes from the unique spin structure. For the $K3$ summands, the spin structure is also the canonical spin$^c$-structure and it follows that the integral Real Seiberg--Witten invariant is defined and non-zero. For the $S^2 \times S^2$ summands, we have that $| deg_R( S^2 \times S^2 , \mathfrak{s}_{S^2 \times S^2} ) | = 1$ by \cite[Proposition 6.1 (3)]{bar3}, where $\mathfrak{s}_{S^2 \times S^2}$ is a Real spin$^c$-structure whose underlying spin$^c$-structure comes from the unique spin structure on $S^2 \times S^2$. Then by a version of the connected sum formula (\cite[Theorem 9.3 (1)]{bar3}), the Real--Seiberg Witten invariant of $(X , \mathfrak{s})$ is non-zero.
\end{proof}

To give examples of Real $4$-manifolds where the minimal genus for Real embedded surfaces is larger than the minimal genus of arbitrary embedded surfaces we will consider minimal genus in highly reducible $4$-manifolds, those of the form $a \mathbb{CP}^2 \# b \overline{\mathbb{CP}}^2$ or $a K3 \# b (S^2 \times S^2)$. Let $L$ be an integral lattice. A class $u \in L$ is {\em primitive} if it can not be written as $nu'$ for some integer $n > 1$ and some $u' \in L$. Every non-zero $u \in L$ can be uniquely be written as $u = du'$ for some integer $d \ge 1$ and some primitive $u' \in L$. We call $d$ the {\em divisibility} of $u$. A class $u \in L$ is called {\em ordinary} if it is not characteristic.

\begin{proposition}\label{prop:minred}
Let $X$ be one of the $4$-manifolds
\begin{itemize}
\item[(i)]{$\# a \mathbb{CP}^2 \# b \overline{\mathbb{CP}^2}$ where $a,b \ge 2$,}
\item[(ii)]{$\# a(S^2 \times S^2) \# b K3$ where $a,b \ge 1$.}
\end{itemize}
Let $u \in H^2(X ; \mathbb{Z})$ be a non-zero class. Assume $u^2 \ge 0$ and in case (i) assume $u$ is a multiple of a primitive ordinary class. Then $u$ can be represented by a smoothly embedded compact oriented surface of genus at most $u^2/2$.
\end{proposition}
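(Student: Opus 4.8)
The plan is to pass to a primitive class and then reduce to an explicit sphere. Write $u=du'$ where $u'$ is primitive and $d\ge 1$ is the divisibility of $u$; in case (i) the hypothesis on $u$ says exactly that $u'$ may be chosen ordinary. I would first show that $u'$ is represented by an embedded $2$-sphere, and then that this forces $du'$ to be represented by a surface of genus at most $u^2/2$.

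For the first step, observe that in both cases $X$ splits off an $S^2\times S^2$ summand: immediately in case (ii), and in case (i) because $\mathbb{CP}^2\#2\overline{\mathbb{CP}^2}\cong(S^2\times S^2)\#\overline{\mathbb{CP}^2}$ and $a,b\ge 2$. The intersection form $Q_X$ is indefinite and unimodular --- even in case (ii), and of the odd type $\langle1\rangle^a\oplus\langle-1\rangle^b$ in case (i). By the classification of primitive vectors in indefinite unimodular lattices, two such vectors of equal square lie in one $O(Q_X)$-orbit once one fixes whether they are characteristic; in the even case (ii) no primitive vector is characteristic, whereas in the odd case (i) "characteristic" is precisely the negation of "ordinary". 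Since $X$ splits off an $S^2\times S^2$, every isometry of $Q_X$ is induced by an orientation-preserving self-diffeomorphism of $X$ (Wall). Composing with such a diffeomorphism, I may assume that $u'$ is a standard vector of square $m:=(u')^2\ge 0$: in case (ii), and in case (i) for $m$ even, $u'=f_1+\tfrac m2 f_2$ inside one $S^2\times S^2$ summand (with $f_1,f_2$ the two sphere classes), represented by the graph of a degree-$\tfrac m2$ holomorphic map $\mathbb{CP}^1\to\mathbb{CP}^1$; in case (i) with $m$ odd, $u'=h+\bigl(f_1+\tfrac{m-1}2 f_2\bigr)$ with $h$ a line class in a $\mathbb{CP}^2$ summand, represented by tubing that line to the graph sphere. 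In every case $u'$ is represented by an embedded sphere $S'$ with $(S')^2=m$.

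For the second step, suppose first $m>0$ and identify a tubular neighbourhood of $S'$ with the disc bundle of its normal bundle $N\cong\mathcal O(m)$ over $\mathbb{CP}^1$. The zero locus $\Sigma$ of a generic small section of $N^{\otimes d}$ is a connected $d$-fold branched cover of $S'$, totally ramified over the $dm$ simple zeros of the section, and $[\Sigma]=du'=u$; Riemann--Hurwitz gives $g(\Sigma)=1-d+\tfrac12 d(d-1)m$, and an elementary estimate using $d\ge 1,\ m\ge 1$ gives $g(\Sigma)\le\tfrac12 d^2m=\tfrac12 u^2$. If $m=0$, take instead $d$ disjoint parallel copies of $S'$, a surface of genus $0=\tfrac12 u^2$. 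In all cases $u$ has an embedded representative of genus at most $u^2/2$.

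The crux, and where I expect the real work to lie, is the normal-form reduction in the first step: one must cite the precise classification of primitive vectors in indefinite unimodular lattices and a diffeomorphism-realisation theorem valid after a single $S^2\times S^2$-stabilisation, and it is here that the "ordinary" hypothesis in case (i) is unavoidable, characteristic classes forming a separate family of orbits. If one wishes to avoid the realisation theorem, the reduction can instead be done concretely, since the isometries needed are composites of permutations of connect-summands, reflections in embedded spheres of square $\pm1$, and the hyperbolic automorphisms coming from handle slides over the $S^2\times S^2$ summand, each manifestly induced by a diffeomorphism; this route is longer but elementary.
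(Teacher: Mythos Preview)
Your argument is correct and shares its backbone with the paper's proof: both invoke Wall's transitivity theorem for primitive classes in indefinite unimodular lattices together with Wall's diffeomorphism realisation theorem for $X=(S^2\times S^2)\#M$, and this is indeed where the ordinariness hypothesis in case (i) enters. The difference lies in the constructive step. The paper moves the full class $u$ (not just $u'$) to a standard form $de+dmf$ in an $S^2\times S^2$ summand (or $de+dmf+dz$ when $k=(u')^2$ is odd) and then cites Ruberman's minimal-genus result for $S^2\times S^2$ together with the degree--genus formula for $\mathbb{CP}^2$, obtaining genus $(d-1)(dm-1)$ in the even case and $(d-1)(dm-1)+(d-1)(d-2)/2$ in the odd case. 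Your route instead first realises the primitive class $u'$ by an embedded sphere and then produces the multiple $du'$ via a cyclic branched cover inside the normal disc bundle. The two approaches yield the same genus when $k$ is even; yours is larger by $d-1$ when $k$ is odd, but still well within the required bound $u^2/2$. Your method is more self-contained, trading the external citation for an explicit local construction; the paper's is shorter once Ruberman is granted.

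One phrasing to tighten: the surface $\Sigma$ is not literally ``the zero locus of a section of $N^{\otimes d}$'' (that zero locus is a set of $dm$ points on $S'$). What you intend is the locus $\{y^d=s(x)\}$ in the total space of $N$, where $s\in H^0(\mathbb{CP}^1,N^{\otimes d})$ has simple zeros; equivalently, the zero locus of $\tau^d-\pi^*s$ as a section of $\pi^*N^{\otimes d}$ on the total space, with $\tau$ the tautological section of $\pi^*N$. With that clarification the smoothness, connectedness (monodromy at each branch point is a full $d$-cycle), Riemann--Hurwitz count, and the identification $[\Sigma]=d[S']$ all go through exactly as you describe.
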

\begin{proof}
By assumption $X$ is of the form $X = (S^2 \times S^2) \# M$, where $M$ is a compact, simply-connected, smooth $4$-manifold with indefinite intersection form. Let $\Gamma$ be the group of isomorphisms of $H^2(X ; \mathbb{Z})$ preserving the intersection form. A theorem of Wall \cite{wall} imples that every element of $\Gamma$ can be realised by an orientation preserving diffeomorphism of $X$. It follows that the minimal genus of a surface representing $u$ depends only on the orbit of $u$ under $\Gamma$. By another theorem of Wall \cite{wall0}, the $\Gamma$-orbit of $u$ depends only on the divisibility $d$, the norm $u^2$ and the type of $u/d$, i.e. whether $u/d$ is characteristic or ordinary. In case (ii) any primitive class is ordinary. In case (i) we have assumed that $u/d$ is ordinary. Thus in either case, the $\Gamma$-orbit of $u$ depends only on $d$ and $u^2$. Let $k = (u/d)^2$, so $u^2 = d^2k$.

Let $L$ be the intersection form of $X$. Write $X = (S^2 \times S^2) \# M$ so that $L = H \oplus L_M$ where $H = H^2(S^2 \times S^2 ; \mathbb{Z})$ is the hyperbolic lattice and $L_M$ is the intersection form of $M$. Give $H$ the basis $e,f$ where $e,f$ are the Poincar\'e duals of $S^2 \times \{ pt \}$ and $\{pt \} \times S^2$. So $e^2 = f^2 = 0$, $\langle e , f \rangle = 1$. 

If $k = 0$, then $u$ is in the same $\Gamma$-orbit as $v = de$. By \cite{rub}, $v$ can be represented by an embedded sphere in $S^2 \times S^2$, hence also by a sphere in $X$. Hence $u$ can be also represented by an embedded sphere.

If $k = 2m \ge 0$ is even and non-negative, then $u$ is in the same $\Gamma$-orbit as $v = de + dmf$. By \cite{rub}, $v$ can be represented by an embedded surface in $S^2 \times S^2$ of genus $g = (d-1)(dm-1)$. Hence also $u$ can be represented by an embedded surface in $X$ of the same genus.

If $k = 2m+1 > 0$ is odd and positive, then $L$ must be odd, so we are in case (i). Then we can write $X = (S^2 \times S^2) \# \mathbb{CP}^2 \# M'$ for some $M'$. Let $z \in L$ denote a generator of $H^2(\mathbb{CP}^2 ; \mathbb{Z})$. If $m=0$ so $k=1$. Then $u$ is in the same $\Gamma$-orbit as $v = dz$. By the degree-genus formula, $v$ can be represented by an embedded surface of genus $g = (d-1)(d-2)/2$, hence the same is true of $u$. If $m > 0$, then $u$ is in the same $\Gamma$-orbit as $v = v_1 + v_2$, where $v_1 = de + dmf$ and $v_2 = dz$. We have already seen that $v_1$ can be represented by a surface of genus $g_1 = (d-1)(dm-1)$ in $S^2 \times S^2$ and $v_2$ by a surface of genus $g_2 = (d-1)(d-2)/2$ in $\mathbb{CP}^2$. Connect summing these, $v$ can be represented in $X$ by a surface of genus $g = g_1 + g_2 = (d-1)(dm-1) + (d-1)(d-2)/2$, hence so can $u$.

In all of these cases one easily checks that $g \le d^2k/2 = u^2/2$, so the result is proven.
\end{proof}

Putting Theorem \ref{thm:sumnonzero} and Propositions \ref{prop:invol1}, \ref{prop:invol2} and \ref{prop:minred} together, we immediately get the following result. The result shows that the minimal genus for embedded surfaces and the minimal genus for Real embedded surfaces can be different.

\begin{theorem}\label{thm:minrg}
Let $X$ be one of the $4$-manifolds
\begin{itemize}
\item[(i)]{$\# a \mathbb{CP}^2 \# b \overline{\mathbb{CP}^2}$ where $a \ge 4$, $b \ge a+17$,}
\item[(ii)]{$\# a(S^2 \times S^2) \# b K3$ where $a,b \ge 1$.}
\end{itemize}
Then $X$ admits an involution $\sigma$ for which the following holds. Let $u \in H^2(X ; \mathbb{Z})$ be a non-zero class. Assume $u^2 \ge 0$ and in case (i) assume $u$ is a multiple of a primitive ordinary class. Then $u$ can be represented by a smoothly embedded compact oriented surface of genus at most $u^2/2$. However any Real embedded surface representing $u$ has genus at least $u^2/2 + 1$.
\end{theorem}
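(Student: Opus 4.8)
The plan is to combine the three ingredients already assembled in this section: the existence of involutions with non-vanishing Real Seiberg--Witten invariants (Propositions \ref{prop:invol1} and \ref{prop:invol2}, packaged through Theorem \ref{thm:sumnonzero}), the ordinary minimal genus bound for highly reducible $4$-manifolds (Proposition \ref{prop:minred}), and the Real adjunction inequality (Theorem \ref{thm:adjunctionr}). So the proof is essentially a bookkeeping exercise: I would fix the $4$-manifold $X$ in case (i) or (ii), verify that the hypotheses of the relevant existence proposition are met, and then read off the two genus bounds.

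First I would handle the existence of $\sigma$. In case (i), $X = \# a\mathbb{CP}^2 \# b\overline{\mathbb{CP}^2}$ with $a \ge 4$, $b \ge a+17$; writing $u = a$, $v = b$ this is exactly the hypothesis of Proposition \ref{prop:invol1}, which provides a Real structure $\sigma$ with $b_+(X)^{-\sigma} > 1$ and a Real spin$^c$-structure $\mathfrak{s}$ with non-zero Real Seiberg--Witten invariant. In fact, as the proof of that proposition shows, $X$ is realised as an equivariant connected sum of admissible pairs, so by Theorem \ref{thm:sumnonzero} the integral invariant $SW_{R,\mathbb{Z}}(X,\mathfrak{s})$ is defined and non-zero; the theorem's conclusion $g \ge \tfrac12[\Sigma]^2 + 1$ is precisely the lower bound we want. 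In case (ii), $X = \# a(S^2\times S^2)\# bK3$ with $a,b \ge 1$, and Proposition \ref{prop:invol2} (together with Theorem \ref{thm:sumnonzero}) gives the same conclusion. So in both cases, any Real embedded surface $\Sigma$ representing a non-torsion class $u$ with $u^2 \ge 0$ satisfies $g \ge u^2/2 + 1$; note $u$ non-zero with $u^2 \ge 0$ is non-torsion since the intersection form is non-degenerate.

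For the upper bound I would simply invoke Proposition \ref{prop:minred}: its hypotheses in case (i) require $a,b \ge 2$, which follows from $a \ge 4$ and $b \ge a+17 \ge 21$, and in case (ii) require $a,b \ge 1$, which is assumed; the hypothesis that $u$ be a multiple of a primitive ordinary class in case (i) is carried over verbatim into the statement of Theorem \ref{thm:minrg}. Thus Proposition \ref{prop:minred} yields an embedded (not necessarily Real) oriented surface of genus at most $u^2/2$ representing $u$. Putting the two bounds together gives the gap claimed in the theorem.

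I do not expect any real obstacle here — the theorem is explicitly a corollary of the preceding results, and the only points requiring a sentence of care are: (a) checking the numerical hypotheses of Propositions \ref{prop:invol1}, \ref{prop:invol2} and \ref{prop:minred} are implied by the hypotheses of Theorem \ref{thm:minrg}; and (b) observing that the genus $0$ case is automatically excluded by Theorem \ref{thm:sumnonzero} (or Theorem \ref{thm:adjunctionr}(2)), since a non-torsion class with non-negative square cannot be represented by a Real sphere. Beyond that, the proof is a two-line citation.

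\begin{proof}
In case (i), the hypotheses $a \ge 4$ and $b \ge a+17$ are precisely those of Proposition \ref{prop:invol1} (with $u=a$, $v=b$), which exhibits $X$ as an equivariant connected sum of admissible pairs carrying a Real structure $\sigma$ with $b_+(X)^{-\sigma}>1$. In case (ii), the hypotheses $a,b \ge 1$ are those of Proposition \ref{prop:invol2}, which again realises $X$ as such an equivariant connected sum. In either case Theorem \ref{thm:sumnonzero} applies: there is a Real spin$^c$-structure $\mathfrak{s}$ on $X$ with $SW_{R,\mathbb{Z}}(X,\mathfrak{s})$ defined and non-zero, and moreover any Real embedded surface $\Sigma \subset X$ of genus $g$ with $[\Sigma]^2 \ge 0$ and $[\Sigma]$ non-torsion satisfies $g \ge \tfrac12[\Sigma]^2 + 1$.

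Now let $u \in H^2(X;\mathbb{Z})$ be non-zero with $u^2 \ge 0$, and in case (i) a multiple of a primitive ordinary class. Since the intersection form of $X$ is non-degenerate and $u \ne 0$, $u$ is non-torsion. Hence any Real embedded surface representing $u$ has genus at least $u^2/2 + 1$ by the preceding paragraph.

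On the other hand, the numerical hypotheses of Theorem \ref{thm:minrg} imply those of Proposition \ref{prop:minred}: in case (i) we have $a \ge 4 \ge 2$ and $b \ge a+17 \ge 21 \ge 2$, while in case (ii) we have $a,b \ge 1$; in both cases the condition that $u$ be a multiple of a primitive ordinary class is assumed. Therefore Proposition \ref{prop:minred} provides a smoothly embedded compact oriented surface of genus at most $u^2/2$ representing $u$. This completes the proof.
\end{proof}
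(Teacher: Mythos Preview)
Your proposal is correct and follows exactly the approach of the paper, which simply states that the result follows immediately by putting Theorem \ref{thm:sumnonzero} and Propositions \ref{prop:invol1}, \ref{prop:invol2} and \ref{prop:minred} together. Your write-up merely fills in the routine verification that the numerical hypotheses line up; the only cosmetic point is that ``$u$ is non-torsion'' is best justified by noting that $X$ is simply-connected (so $H^2(X;\mathbb{Z})$ is free) rather than by non-degeneracy of the intersection form.
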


The adjunction inequality gives a lower bound on the minimal genus of Real embedded surfaces representing a given class. We can also hope to find upper bounds for the minimal genus via constructive methods. One such approach is to use real algebraic geometry.

\begin{proposition}\label{prop:vample}
Let $X$ be a compact K\"ahler surface and $\sigma$ an antiholomorphic involution on $X$. Let $x \in H^2(X ; \mathbb{Z})$. If $x = c_1(L)$ where $L$ is a very ample holomorphic line bundle and if $\sigma$ lifts to an antiholomorphic involution on $L$, then there exists a Real embedded surface $\Sigma$ representing $x$ of genus $g$ given by $2g-2 = -\langle c(\mathfrak{s}) , x \rangle + x^2$, where $\mathfrak{s}$ is the canonical spin$^c$-structure. If $b_+(X) \ge 3$ then $\Sigma$ has minimal genus.
\end{proposition}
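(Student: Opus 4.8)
The plan is in three steps: realize $\Sigma$ as a generic \emph{real} hyperplane section of the projective embedding defined by $L$; compute its genus from the complex adjunction formula; and, when $b_+(X)\ge 3$, obtain minimality from the ordinary Seiberg--Witten adjunction inequality for the canonical spin$^c$-structure.

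\emph{Step 1 (construction).} Since $\sigma$ lifts to an antiholomorphic involution $\sigma_L$ of $L$, the assignment $s\mapsto \sigma_L\circ s\circ\sigma$ is an antilinear involution of the space of holomorphic sections $H^0(X,L)$, whose fixed subspace $V$ is a real form: $H^0(X,L)=V\otimes_{\mathbb R}\mathbb C$. As $L$ is very ample, an $\mathbb R$-basis of $V$ identifies the associated embedding with a closed embedding $\phi:X\hookrightarrow \mathbb{CP}^N$ intertwining $\sigma$ with complex conjugation. I would take $\Sigma$ to be the zero locus of a generic $s\in V$, i.e.\ $\phi(X)$ cut by a generic real hyperplane. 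By Bertini's theorems the sections of $L$ with smooth and connected zero locus form a Zariski-open dense subset of $H^0(X,L)$ (connectedness using $\dim X=2$), and its complement is a proper closed subvariety defined over $\mathbb R$; since the real locus $\mathbb P(V)\cong\mathbb{RP}^N$ is Zariski-dense in $\mathbb P(H^0(X,L))=\mathbb{CP}^N$, a generic $s\in V$ has smooth connected zero locus $\Sigma$. This $\Sigma$ is a smooth complex curve with $[\Sigma]=c_1(L)=x$, it is $\sigma$-invariant, and $\sigma|_\Sigma$ is antiholomorphic hence orientation reversing; thus $\Sigma$ is a Real embedded surface representing $x$.

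\emph{Step 2 (genus).} By the complex adjunction formula for $\Sigma\subset X$, $2g-2=\langle K_X+x,x\rangle = x^2+\langle K_X,x\rangle$. The canonical spin$^c$-structure has $\det S^+=K_X^{-1}$, so $c(\mathfrak s)=-K_X$ and hence $2g-2=-\langle c(\mathfrak s),x\rangle + x^2$, as stated.

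\emph{Step 3 (minimality).} Assume $b_+(X)\ge 3$. For a K\"ahler surface $b_+(X)=1+2p_g(X)$, so this is equivalent to $p_g(X)\ge 1$ (equivalently $b_+(X)>1$); in particular $K_X$ is represented by an effective divisor, so, $L$ being ample, $\langle K_X,x\rangle\ge 0$, i.e.\ $\langle c(\mathfrak s),x\rangle\le 0$ and $|\langle c(\mathfrak s),x\rangle| = -\langle c(\mathfrak s),x\rangle$. Ampleness also gives $x^2=L^2>0$, so $x$ is non-torsion. Moreover, for a K\"ahler surface with $b_+>1$ the canonical spin$^c$-structure has $SW(X,\mathfrak s)=\pm1\ne 0$ (with the K\"ahler perturbation there is a single, transversally cut out solution). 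Now let $\Sigma'\subset X$ be any smoothly embedded compact oriented surface with $[\Sigma']=x$ and genus $g'$. Then $[\Sigma']^2=x^2>0$, so the ordinary adjunction inequality recalled in the introduction forces $g'>0$ and $2g'-2\ge |\langle c(\mathfrak s),x\rangle| + [\Sigma']^2 = -\langle c(\mathfrak s),x\rangle + x^2 = 2g-2$; hence $g'\ge g$. Applied in particular to Real embedded surfaces, this shows $\Sigma$ has minimal genus.

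The Seiberg--Witten input and the adjunction bookkeeping are routine; the one delicate point is ensuring that a \emph{real} (rather than merely complex) generic section of $L$ has smooth connected zero locus, which rests on the Zariski-density of $\mathbb{RP}^N$ in $\mathbb{CP}^N$ together with the $\sigma$-equivariance of the projective embedding supplied by the real structure on $H^0(X,L)$.
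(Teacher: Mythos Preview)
Your proof is correct and follows essentially the same approach as the paper's: both use the real structure on $H^0(X,L)$ to make the projective embedding equivariant, invoke Bertini together with the Zariski density of the real hyperplanes to find a smooth connected real section, compute the genus by the complex adjunction formula, and then appeal to the ordinary Seiberg--Witten adjunction inequality for the canonical spin$^c$-structure when $b_+(X)\ge 3$. The only minor difference is in Step~3: you work to show $\langle c(\mathfrak s),x\rangle\le 0$ via effectivity of $K_X$, whereas the paper simply uses the trivial inequality $|\langle c(\mathfrak s),x\rangle|\ge -\langle c(\mathfrak s),x\rangle$; your extra argument is correct but unnecessary for the conclusion.
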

\begin{proof}
Since $L$ is very ample, it defines a projective embedding $\varphi : X \to \mathbb{CP}^n$ for some $n$. The Real structure on $L$ induces a Real structure on $H^0(X , \mathcal{O}(L))$ and hence a Real structure $c$ on $\mathbb{CP}^n = \mathbb{CP}( H^0(X , \mathcal{O}(L))^*)$. In suitably chosen coordinates $c$ is given by complex conjugation. Moreover the map $\varphi$ is equivariant in the sense that $c \circ \varphi = \varphi \circ \sigma$. Bertinin's theorem implies that the intersection of $X$ with a generic hyperplane in $\mathbb{CP}^n$ is a non-singular connected subvariety of $X$. More precisely, the set of hyperplanes with this property is an open Zariski dense subset $U$ of the dual projective space $\mathbb{CP}( H^0(X , \mathcal{O}(L)))$. Therefore the intersection if $U$ with $\mathbb{CP}( H^0(X , \mathcal{O}(L)) )^{c}$ is non-empty. Hence there exsits a Real hyperplane whose intersection with $X$ is a Real, connected, embedded surface $\Sigma \subset X$. By construction $\Sigma$ is the zero locus of a section of $L$, so $[\Sigma] = x$. The adjunction formula gives $K_\Sigma = (K_X + L)|_\Sigma$, hence $2g-2 = -\langle c(\mathfrak{s}) , [\Sigma] \rangle + [\Sigma]^2 = -\langle c(\mathfrak{s}) , x\rangle + x^2$. If $b_+(X) \ge 3$, then the ordinary adjunction inequality implies that any embedded surface representing $x$ has genus $g'$ satisfying $2g'-2 \ge |\langle c(\mathfrak{s}) , x \rangle | + x^2 \ge -\langle c(\mathfrak{s}) , x \rangle + x^2 = 2g-2$. Hence $\Sigma$ has minimal genus.
\end{proof}

\begin{remark}
Note that if $b_1(X) = 0$ and $\sigma$ does not act freely, then a necessary and sufficient condition for a holomorphic line bundle $L$ to admit a Real structure compatible with the holomorphic structure is that $\sigma(x) = -x$. This follows because $\sigma(x) = -x$ implies that $\sigma^*(\overline{L}) \cong L$ as ordinary complex line bundles. But since $b_1(X) = 0$, a complex line bundle admits at most one holomorphic structure up to isomorphism, so $\sigma^*(\overline{L}) \cong L$ as holomorphic line bundles. This means that $\sigma$ admits an anti-holomorphic lift $\widetilde{\sigma} : L \to L$. Then $\widetilde{\sigma}^2 : L \to L$ is an automorphism of $L$ as a holomorphic line bundle, hence is multiplication by a non-zero constant $t$. Restricting to a fixed point of $\sigma$, we see that $t$ must be a positive real number. Replacing $\widetilde{\sigma}$ by $t^{-1/2} \widetilde{\sigma}$ gives an anti-holomorphic involutive lift of $\sigma$.
\end{remark}

\begin{example}
Let $X \subset \mathbb{CP}^{n}$ be a non-singular complete intersection of degree $(d_1 , d_2 , \dots , d_{n-2})$ such that the defining polynomials $f_1, \dots , f_{n-2}$ have real coefficients. To see that such complete intersections exist, we consider the space $P$ of all non-zero homogeneous polynomials $f_1 , \dots , f_{n-2}$ of degrees $d_1, \dots , d_{n-2}$ modulo rescalings $f_i \mapsto c_i f_i$, $c_i \in \mathbb{C}^*$. Then $P = \mathbb{CP}^{r_1} \times \cdots \times \mathbb{CP}^{r_{n-2}}$ is a product of projective spaces where $r_i = dim( H^0(\mathbb{CP}^n , \mathcal{O}(d_i) ) ) - 1$. Repeated application of Bertini's theorem implies that the set $\Delta \subset P$ of polynomials $(f_1 , \dots , f_{n-2})$ whose zero locus is singular forms a proper Zariski closed subset. Hence the complement $P \setminus \Delta$ intersects the locus of polynomials $(f_1 , \dots , f_{n-2})$ with real coefficients. If $f_1, \dots, f_{n-2}$ have real coefficients then complex conjugation on $\mathbb{CP}^n$ sends $X$ to itself and hence defines an anti-holomorphic involution $\sigma$ on $X$. For any $d > 0$, let $L = \mathcal{O}(d)|_X$. Then $L$ is very ample and $\sigma$ lifts to an anti-holomorphic involution on $L$. Then Proposition \ref{prop:vample} can be applied to $L$. The result is that if $b_+(X) \ge 3$, then minimal genus of a Real embedded surface representing $x = c_1(L)$ is given by
\[
2g-2 = dd_1 d_2 \cdots d_{n-2}( d + d_1 + d_2 + \cdots + d_{n-2} - n).
\]
\end{example}

\begin{example}
Let $(X , \sigma) = (X_1 , \sigma_1) \# \cdots \# (X_n , \sigma_n)$ be an equivariant connected sum where each $(X_i , \sigma_i)$ is as in Proposition \ref{prop:vample}. Assume further that $b_1(X_i) = 0$ for all $i$ and that $\sigma_i$ acts non-freely, so that the equivariant connected sum can be constructed. Let $L_i \to X_i$ be a very ample line bundle on $X_i$ for which $\sigma_i$ lifts to an anti-holomorphic involution. Let $x_i = c_1(L_i)$. Let $\mathfrak{s} = \mathfrak{s}_1 \# \cdots \# \mathfrak{s}_n$ where $\mathfrak{s}_i$ is the canonical spin$^c$-structure on $X_i$. By Proposition \ref{prop:vample}, $x_i$ can be represented by a Real embedded surface $\Sigma_i \subset X_i$ of genus $g_i$ satisfying $2g_i - 2 = -\langle c(\mathfrak{s}_i) , x_i \rangle + x_i^2$. Note that we must have $\langle c(\mathfrak{s}_i) , x_i \rangle \le 0$ because of the adjunction inequality $2g_i - 2 \ge | \langle c(\mathfrak{s}_i) , x_i) \rangle | + x_i^2$.

Now the idea is to connect $\Sigma_1, \dots , \Sigma_n$ together to form a Real surface representing $x = x_1 + \cdots + x_n$. However if we just perform an ordinary connected sum, the resulting surface will typically not be Real. If $\sigma_i$ acts non-freely on $\Sigma_i$ for each $i$, then we can do the connected sum equivariantly. Without this assumption we can still join $\Sigma_1$ to $\Sigma_2$ using a pair of handles that are exchanged by $\sigma$. Similarly we add a pair of handles joining $\Sigma_2$ to $\Sigma_3$ and so on. The result will be a Real embedded surface $\Sigma$ of genus $g = g_1 + g_2 + \cdots + g_n + (n-1)$. Thus
\begin{align*}
2g-2 &= \left( \sum_{i=1}^n 2g_i \right) + 2n-4 \\
&= \left( \sum_{i=1}^{n} (-\langle c(\mathfrak{s_i}) , x_i \rangle + x_i^2 + 2)\right) + 2n-4 \\
&= -\langle c(\mathfrak{s}) , x \rangle + x^2 + 4n-4 \\
&= | \langle c(\mathfrak{s}) , x \rangle | + x^2 + 4n-4.
\end{align*}
On the other hand, the connected sum formula for Real Seiberg--Witten invariants implies that $SW_{R,\mathbb{Z}}(X,\mathfrak{s}) \neq 0$ so we get a lower bound on $g$ from the adjunction inequality (here we use the assumption $b_1(X_i) = 0$ for all $i$ to ensure that the integral Real Seiberg--Witten invariants of $(X_i , \mathfrak{s}_i)$ are defined). Hence we have upper and lower bounds for the Real minimal genus $g^{min}_R(x)$
\[
|\langle c(\mathfrak{s}) , x \rangle| + x^2 \le 2g^{min}_R(x) - 2 \le |\langle c(\mathfrak{s}) , x \rangle | + x^2 + 4n-4.
\]
\end{example}

%%%%%%%%%%%%%%%%%%%%%%%%%%%%%%%%%%%%%%%%%%%%%%%%%%%%%%

\bibliographystyle{amsplain}

\end{document}